\numberwithin{equation}{section}
\newcommand\mL{L\kern-0.08cm\char39}
\newcommand\id{\mathop{\rm id}}
\newcommand\card{\mathop{\rm card}}
\newcommand\Inte{\mathop{\rm Int}}
\newcommand\Lint{\mathop{\rm Sint}}
\newcommand\End{\mathop{\rm End}}
\newcommand\pr{\mathop{\rm pr}}
\newcommand\ord{\mathop{\rm ord}}
\newcommand\SL{\mathop{\rm SL}}
\newcommand\rot{\mathop{\rm rot}}
\begin{document}

\title{Minimal sets of fibre-preserving maps in graph bundles
}

\author{S. Kolyada         \and
       {\mL}. Snoha \and S. Trofimchuk%etc.
}

\institute{S. Kolyada \at
              Institute of Mathematics, NASU, Tereshchenkivs'ka 3,
01601 Kiev, Ukraine \\
              \email{skolyada@imath.kiev.ua}           %  \\
%             \emph{Present address:} of F. Author  %  if needed
           \and
           {\mL}. Snoha  \at
              Department of Mathematics, Faculty of Natural Sciences,
            Matej Bel University, Tajovsk\'eho 40, 974 01 Bansk\'a Bystrica,
            Slovakia\\
              \email{Lubomir.Snoha@umb.sk}           %  \\
%             \emph{Present address:} of F. Author  %  if needed
           \and
           {S}. Trofimchuk \at
              Instituto de Matem\'atica y F\'{\i}sica, Universidad
de Talca, Casilla 747,
Talca, Chile \\
              \email{trofimch@inst-mat.utalca.cl}           %  \\
}

\date{}
% The correct dates will be entered by the editor

\maketitle

\begin{abstract}
Topological structure of minimal sets is studied for a dynamical system $(E,F)$ given by a fibre-preserving, in general non-invertible,  continuous selfmap $F$ of a graph bundle $E$. These systems include, as a very particular case, quasiperiodically forced circle homeomorphisms. Let $M$ be a minimal set of $F$ with full projection onto the base space $B$ of the bundle. We show that $M$ is nowhere dense or has nonempty interior depending on whether the set of so called endpoints of $M$ is dense in $M$ or is empty.
If $M$ is nowhere dense, we prove that either a typical fibre of $M$ is a Cantor set, or there is a positive integer $N$ such that a typical fibre of $M$ has cardinality $N$.
If $M$ has nonempty interior we prove that there is a positive integer $m$ such that a typical fibre of $M$, in fact even each fibre of $M$ over a \emph{dense open} set $\mathcal O \subseteq B$, is a disjoint union of $m$~circles. Moreover, we show that each of the fibres of $M$ over $B\setminus \mathcal O$ is a union of circles properly containing a disjoint union of $m$~circles. Surprisingly,  some of the circles in such ``non-typical" fibres of $M$ may intersect. We also give sufficient conditions for $M$ to be  a sub-bundle of~$E$.\keywords{Dynamical system \and minimal set \and  graph bundle \and skew product}
\noindent {\bf Mathematics Subject ClassiÞcation (2010)}  
{Primary 54H20; Secondary 37B05}
\end{abstract}

\section{Introduction and statement of main results}\label{S:intro}

Let $f$ be a continuous selfmap of a compact metric space $X$. The~system $(X,f)$ is called \emph{minimal} if there is
no proper subset $M\subseteq X$ which is nonempty, closed and
$f$-invariant (i.e., $f(M)\subseteq M$). In such a case we also say
that the map $f$ itself is minimal. Clearly, a system $(X,f)$ is
minimal if and only if the orbit  $\{x, f(x), f^2 (x), \dots \}$ of every point $x\in X$ is dense in
$X$.  Note that by an orbit
we mean a forward orbit rather than a full orbit, even if $f$ is a
homeomorphism.  The basic  fact  is that any compact dynamical
system $(X,f)$ has minimal (closed) subsystems $(M, f|_{M})$. Such closed
sets $M$ are called \emph{minimal sets} of $f$ or, more precisely,
of $(X,f)$. 
The minimal sets, as `irreducible' parts of a system, attract much
attention and their topological structure is one of the central topics
in topological dynamics.

The classification  of compact metric spaces admitting minimal maps is a
well-known open problem in topological dynamics \cite{Aus,Bron}.
For the state of the art of the problem see~\cite{BDHSS,B,BKS,JKP} and references therein. 

It is folklore that if
$X$ is a compact \emph{zero-dimensional} space, $f: X\to X$ is continuous and
$M\subseteq X$ is a minimal set of $f$ then $M$ is either a finite set (a
periodic orbit of $f$) or a Cantor set and this is in fact a characterization
because also conversely, whenever $M\subseteq X$ is a finite or a Cantor set then
there is a continuous map $f: X\to X$ such that $M$ is a minimal set of $f$.
Among \emph{one-dimensional} spaces, the characterization of minimal sets is
known for \emph{graphs} --- minimal sets on graphs are  finite sets, Cantor sets and unions of finitely many pairwise disjoint
simple closed curves, see \cite{BHS} or \cite{Mai}.  The full
characterization of minimal sets on (local)  \emph{dendrites} has been found just recently 
\cite{BDHSS}.

In \emph{higher dimensions} the topological structure of minimal
sets is much more complicated and, besides some important examples (see e.g. \cite{BKS,Hand}),
only few results are known.  One obvious fact is that  if $h$ is a homeomorphism 
of a connected space $X$ then a minimal set of $h$ either is nowhere dense or 
coincides with  $X$.   It is interesting that  the same conclusion  is true  for continuous 
endomorphisms of compact connected 2-manifolds \cite{KST-mnfld}
while  it is an open problem whether this result  holds also in 
dimensions $n >2$.  A related question is which  manifolds admit minimal maps. 
Again, the answer is completely known only in dimension 2: among 2-manifolds, compact or not, 
with or without boundary, only finite unions of tori and finite unions  of Klein bottles admit minimal 
maps \cite{B}.   In dimensions
higher than $2$ the tori and we know from \cite{FH} that also the
odd-dimensional spheres admit minimal diffeomorphisms.  
 Note that a non-compact manifold never admits a minimal map 
by \cite{Got}. This is  because we define minimality as density of  {\it forward} orbits.  It does not exclude the possibility  to have a homeomorphism of a non-compact manifold with all {\it full} orbits dense.  In any case, 2-sphere without  a finite set of points does not admit such a homeomorphism  \cite{LCY}.  

To find a full
topological characterization of minimal sets on compact, connected
2-manifolds is a very difficult task.  Very recently, a classification of minimal sets on 2-torus has been obtained  for homeomorphisms \cite{JKP}.  

The main contribution of the present paper is a partial description of minimal sets of  fibre-preserving maps in graph bundles.

\subsection{Fibre-preserving maps and their minimal sets}\label{SS:motivation}
A dynamical system $(E,F)$ is called an
\emph{extension} of a \emph{base} dynamical system $(B,f)$ if there is a
continuous surjective map $p: E\to B$, called a {\it factor map} or a {\it projection},  such that $p \circ F = f \circ p$.
We also say that the base $(B,f)$ is a \emph{factor} of $(E,F)$.   Note that for every $b\in B$ we have
$F(p^{-1}(b)) \subseteq p^{-1}(f(b))$, i.e., $F$ sends the fibre over
$b$ into the fibre over $f(b)$. Therefore $F$ is said to be \emph{fibre-preserving}.
 Suppose that $(B,f)$ is minimal and $(E,F)$ is an extension of it. If we additionally assume that $E$ is compact then always there
is a minimal set $M$ in the system $(E,F)$ and since $M$ projects onto a minimal set of $(B,f)$,
we necessarily have $p(M)=B$. 

A very special case of an extension is when $E$ is a cartesian product, $E= B\times Y$, and $F(x,y) =
(f(x),g(x,y))$. Then $F$ is fibre-preserving, the fibres being the ``vertical" copies of $Y$, i.e.
the sets $\{b\}\times Y$ where $b\in B$, and the factor map being the natural projection of $E$
onto $B$. The map $F$ is also called a \emph{skew product map} or
sometimes a \emph{triangular map}.

The study of fibre preserving maps and  their mi\-ni\-mal sets
has a long tradition. 
Much attention has been paid to minimal sets of fibre-preserving maps on the torus, for instance in the case of \emph{quasiperiodically forced (qpf) circle homeomorphisms}. These systems naturally appear in the  study of the scalar linear quasi-periodic Schr\"o\-din\-ger equations. In such a case the dynamics is given by the \emph{projective action of a quasiperiodic $\SL(2,\mathbb R)$-cocycle} (the $2$-torus is identified with $\mathbb S^1 \times \mathbb P^1(\mathbb R)$ and the projective action of $\SL(2,\mathbb R)$ is considered on $\mathbb P^1(\mathbb R)$).  The most interesting situation occurs when the mentioned Schr\"odinger equations are non-uniformly hyperbolic \cite{Bjer}. An old question by Herman \cite[Section 4.14]{H} concerns   topological structure of  the unique minimal set $M$ in that case.  Herman partially described the set $M$. In particular, $M$ is nowhere dense and the intersection $M_\theta$ of $M$ with a vertical fibre $\{\theta\}\times  \mathbb P^1(\mathbb R)$ is, generically, a singleton. Herman's question is  whether also all the other fibres  $M_\theta$ are connected;  for more details  and related results see~ \cite{BCJL}, \cite{BJohn},  \cite{Bjer}, \cite{H} and references therein.  Bjerkl\"ov \cite{Bjer} shows that the question has an affirmative answer in some special cases.
According to recent preprint  by Hric and J\"ager \cite{HrJ},  in general the answer is negative.

 In a more general setting of \emph{skew product circle flows} (both continuous and discrete) over  a minimal base (forcing) on a compact metric space $Y$, a topological classification of minimal sets was recently  given by Huang and Yi   \cite{HY}.  They showed that if $M$ is a minimal set of such a system then either $M$ is the whole space $Y\times \mathbb S^1$, or there is a positive integer $N$ such that a typical  fibre of $M$ consists of $N$ points, or a typical fibre of $M$ is a Cantor set. Below in Theorem~E, we amplify this result  to general fibre-preserving (not necessarily invertible) maps in compact graph bundles over  a minimal base.

B\'eguin, Crovisier,  J\"ager and Le Roux~\cite{BCJL} have constructed   transitive qpf circle homeomorphisms with complicated minimal sets. For example, the minimal set can be a Cantor set whose intersection with each vertical fibre (circle) is uncountable (the possibility that some of these intersections have isolated points in the topology of the fibre has not been excluded and is  probable). Thus, minimal sets of fibre preserving maps can be quite complicated. This is true even for triangular maps in the square.
To illustrate this, recall that so called \emph{Floyd-Auslander minimal systems} \cite{HJ} are
homeomorphisms which are extensions of Cantor minimal homeomorphisms and their
phase spaces are subsets of the unit square which are nonhomogeneous --- some
fibres are compact intervals while the others are singletons. Modifying the
construction, one can obtain also a noninvertible nonhomogeneous system of this
kind \cite{SS}. Note that, by the extension lemma from \cite{KS1}, all these
systems can be embedded into systems given by triangular selfmaps of the square.

In  the present paper we wish to shed more light on the problem of characterizing  minimal
sets of higher dimensional maps by studying minimal sets of continuous fibre-preserving
(not necessarily invertible) maps in graph bundles. It does not seem easy to generalize the results to more general bundles.

\vspace{0mm}

\subsection{Star-like interior points and end-points in graph bundles}\label{SS:terminology}

To state our main results, we need  some terminology. A \emph{fibre space} is an object
$(E, B, p)$ where $E$ and $B$ are topological spaces and $p: E \to
B$ is a continuous surjection. Here $E$, $B$ and $p$ are called the
total space, the base (space) and the projection (map) of the fibre
space, respectively, and $p^{-1}(b)$ is called the fibre over the
point $b\in B$. If $\Gamma$ is another topological space, the fibre
space $(E, B, p)$ is called a \emph{fibre bundle} with fibre $\Gamma$, and denoted by $(E, B, p,
\Gamma)$, if the projection map $p: E \to B$ satisfies the following
condition of local triviality: For every point $b \in B$ there is an
open neighborhood $U$ of $b$ (which will be called a
\emph{trivializing neighborhood}) and a homeomorphism $h:
p^{-1}(U)\to U\times \Gamma$ such that on $p^{-1}(U)$ it holds
$\pr_1 \circ h = p$. Here $\pr_1: U\times \Gamma\to U$ is the
canonical projection onto the first factor.  We will always assume that {\bf both $E$ and $B$ are compact metric
spaces}  and so we will speak on compact
fibre bundles.

Given a fibre space  $(E, B, p)$, consider  dynamical systems
$(E,F)$ and $(B,f)$ with $p\circ F = f \circ p$. Thus, $(E,F)$
is an extension of $(B,f)$ and $(B,f)$ is a factor of $(E,F)$, the
projection map $p$ being the factor map. Then  $F$ is \emph{fibre-preserving}, it sends
the fibre $p^{-1}(b)$ over $b\in B$ into the fibre $p^{-1}(f(b))$ over $f(b)$.

A \emph{graph} is a (nonempty) compact metric space which can be
written as the union of finitely many arcs any two of which are either disjoint
or intersect only in one or both of their end-points. A graph need
not be connected and  a singleton is not a graph.   A
\emph{tree} is a graph containing no {\it circle} (i.e. a {simple closed curve}). The number
of arcs emanating  from a point $x \in G$ is called the
\emph{order} of $x$ and is denoted by $\ord (x, G)$. Points of
order $1$ are called \emph{end-points} of $G$ and points of order at
least $3$ are called \emph{ramification points} of $G$.

For $n\geq 1$ we will consider the notion of the \emph{$n$-star}
$S_n$, which can be described as the set of all complex numbers $z$
such that $z^n$ is in the real unit interval $[0,1]$, i.e., a
central point (the origin) with $n$ copies of the interval $[0,1]$
attached to it. We will view the $n$-star as a tree with $n+1$
vertices, one of them (the central point) having order $n$ and the
other $n$ vertices (the end-points of $S_n$) having order $1$. Any set homeomorphic to $S_n$ will also be called an
$n$-star and also denoted by $S_n$. Note that both  $S_1$ and $S_2$ are
homeomorphic to a closed interval. By the
\emph{open $n$-star} $\Sigma_n$ we will mean $S_n$ without its $n$
end-points. In particular, $\Sigma_2$ is homeomorphic to an open
interval (while $\Sigma_1$ to a half-closed interval).

\begin{definition}\label{D:star-like in graph}
Let $\Gamma$ be a graph and $Z\subseteq \Gamma$ be closed. A point
$x\in Z$ is said to be a \emph{star-like interior point of $Z$} if
there exists a $Z$-open neighborhood of $x$ (i.e., the intersection
of $Z$ and a $\Gamma$-open neighborhood of $x$) which is
homeomorphic to $\Sigma_k$ for some $k\geq 2$; we assume here that this
homeomorphism sends the point $x$ to the central point of $\Sigma_k$
(then $k$ is uniquely determined).
If $x\in Z$ is not a star-like interior point of $Z$ we say that it is an
\emph{end-point of $Z$}. Let $\Lint (Z)$ and $\End (Z)$ denote the
set of all star-like interior points of $Z$ and the set of all end
points of $Z$, respectively.
\end{definition}

Figure 1 shows that a star-like interior point of $Z$ need not be an
interior point of $Z$ in  $\Gamma$ and an interior point
of $Z$ need not be a star-like interior point of $Z$.

\vspace{-5mm}

 \begin{figure}[ht]
  \label{Fig:1}
  \begin{center}
  \includegraphics[width=8.0cm]{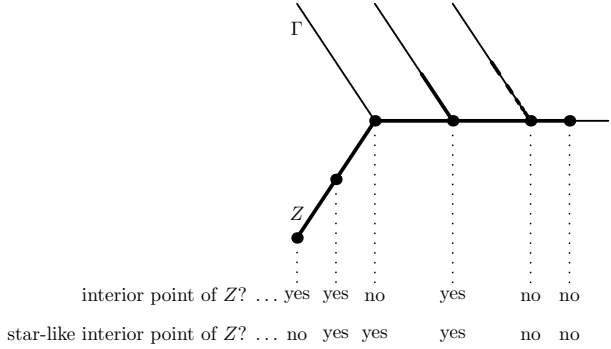}
   \caption{There is no connection between interior and star-like interior points.}
  \end{center}
 \end{figure}

\vspace{-5mm}

The set $\Lint (Z)$ is open in $Z$ (but
not necessarily in $\Gamma$) and so the set $\End (Z)$ is closed in
$Z$ (hence closed in $\Gamma$). If $Z$ is a subgraph of $\Gamma$,
the set $\End (Z)$ coincides with the usual set of end-points of the
graph $Z$.

A \emph{graph bundle} is a fibre bundle whose fibre $\Gamma$ is a
graph. Given a graph bundle $(E, B, p, \Gamma)$, for $M\subseteq E$
and $b\in B$ we denote $M_b = M \cap p^{-1}(b)$; this set is said to
be the \emph{fibre of $M$ over $b$}. When speaking on the fibres of $M$
over points lying in a subset $U$ of $B$,
we sometimes call them \emph{fibres of $M$ over the set $U$}.
If $M\subseteq E$ and $U\subseteq B$, we denote $M_U = M \cap p^{-1}(U)$.
So, $M_U$ is the union of all fibres of $M$ over the set $U$.

\begin{definition}\label{D:star-like in bundle}
Given a closed set $M$ in a compact graph bundle $(E, B, p, \Gamma)$ we define the
set of \emph{star-like interior points of $M$} and the set of \emph{end-points of $M$} by, respectively, 
$$
\Lint (M) = \bigcup_{b\in B} \Lint (M_b) \quad \text{and} \quad \End (M) = \bigcup_{b\in B} \End (M_b).
$$
\end{definition}

Of course, $\End (M) = M\setminus \Lint (M)$. In general it is not true
that $\Lint (M)$ is open or $\End (M)$ is closed in $E$ or $M$.

\subsection{Main results}\label{SS:main results}
Throughout the paper,  $(E, B, p, \Gamma)$ is a compact graph bundle, $(E,F)$ and $(B,f)$ are 
dynamical systems with $p\circ F = f \circ p$. We also assume that  the base system $(B,f)$ is minimal or,  equivalently, that $p(M)=B$ for each minimal set $M \subseteq E$ of  $F$.  Our first main result is the following dichotomy for a minimal set $M$ formulated in terms of end-points of $M$.

\vspace{2mm}

\noindent {\bf Theorem A.} {\it
Let $M$ be a minimal set (with full projection onto the base) of a fibre-preserving map in a compact graph bundle $(E,B,p,\Gamma)$. Then there are two mutually exclusive possibilities:
\begin{enumerate}
\item [{ (A1)}] $\overline {\End (M)}=M$ (and this holds if and only if $M$ is nowhere dense in $E$);
\item [{ (A2)}] $\End(M) = \emptyset$ (and this holds if and only if $M$ has nonempty interior in $E$). 
\end{enumerate}
In particular, the fibre-preserving maps in tree bundles have only nowhere dense minimal sets.}

\vspace{2mm}

The assumption that the base system $(B,f)$ is minimal is not
restrictive. In fact, if $M$ is a minimal set of $(E,F)$ then its
projection $p(M)$ is a minimal set of $(B,f)$ and so one
can pass to the sub-bundle over $p(M)$ and to consider, instead of
$(E,F)$, the system $(E^*, F|_{E^*})$ where $E^* = p^{-1} (p(M))$.
As an application of this fact we get that though a minimal
set of a triangular  map in the square can
contain a vertical interval (so that in general $\End (M) \neq M$
in the case (A1)), the following corollary
holds ($I$~denotes a real compact interval and $\pr_1$ is the projection
onto the first coordinate).

\vspace{2mm}

\noindent {\bf Corollary B.} {\it
Let $F(x,y)=(f(x), g(x,y))$ be a continuous triangular map in
the square $I^2$ and let $M$ be a minimal set of $F$. Then $M$ is
nowhere dense in the space $\pr_1(M) \times I$.
}
\vspace{2mm}

We know from the characterization of minimal sets on the interval
that $\pr_1(M)$ is either a finite set or a Cantor set. In the
latter case the result in the corollary is nontrivial, it
strengthens Theorem~1 from~\cite{FPS2} (where the same result is
obtained for a very particular and small subclass of the class of
triangular selfmaps of the square) and answers in negative the
question posed by J.~Sm\'\i tal whether a minimal set $M$ of a
triangular map in the square can have nonempty interior in the space
$\pr_1(M) \times I$.

\medskip

So,  no direct-product $B\times I$  admits a minimal fibre-preserving map (with the fibre $I$). Cannot we remove the assumption that the maps are fibre-preserving? The answer is negative. In fact, if $\mathbb S^1$ is a circle and $H$ is the Hilbert cube then the space $P= \mathbb S^1 \times H$ admits a continuous minimal map (in the form of a skew product map with an irrational rotation in the base $\mathbb S^1$ and homeomorphisms $H\to H$ as fibre maps, see \cite{GW}). However, $P$ can be written in the form $P= (\mathbb S^1 \times H) \times I$. 
Thus we have a space of the form $B\times I$ admitting a minimal, of course not fibre-preserving map (with the fibre being $I$).  Here dimension of $B$ is infinite. An interesting 
 question is whether it is true that all minimal, not necessarily fibre-preserving, maps in interval bundles $B\times I$ have only nowhere dense minimal sets if we additionally assume that $B$ has finite dimension. Recall that, by the result from~\cite{KST-mnfld}, this is true if $B$ is a one-dimensional manifold, possible with boundary, so that $B\times I$ is a $2$-manifold with boundary.

\medskip

In each of the cases (A1) and (A2) in Theorem~A, there are severe restrictions for the topological structure of the minimal set $M$.
In the case (A2), some of such restrictions   are listed in Theorem~C whose \emph{full version} is given in Section~\ref{S:proofC}. Here, in Introduction, we prefer to list just those of them which seem to be most important and whose statement is neither cumbersome nor involves the notion of \emph{strongly} star-like interior points which will be introduced in Section~\ref{S:slip}. To keep the shortened version of the theorem compatible with the full version, we do not renumber the items.

\vspace{2mm}

\noindent {\bf Theorem C (shortened version).} {\it Let $M$ be a minimal set (with full projection onto the base) of a fibre-preserving map in a compact graph bundle $(E,B,p,\Gamma)$. Assume that $M$ has nonempty interior.  Then the following holds.
\begin{enumerate}
\item [(C4)] All the sets $M_b$, $b\in B$, are unions of circles. In fact there exist an open dense set  $\mathcal{O} \subseteq B$ and a positive integer $m$ such that
\begin{itemize}
\item[$\bullet$] for each $z \in \mathcal{O}$, $M_z$ \emph{is} a \emph{disjoint} union of $m$ circles, and
\item[$\bullet$] for each $z\in B\setminus \mathcal{O}$, $M_z$ is a union of circles which properly contains a disjoint union of $m$ circles.
\end{itemize}
In particular, if $M_z$ is a circle for some $z\in B$, then $M_z$ is a circle for all $z$ in the open dense subset $\mathcal O$ of $B$.
\item [(C6)] The set $M_{\mathcal O}$ is dense in $M$.
\item [(C8)] If $z\in \mathcal{O}$ then the set $M_z$, which is a disjoint union of $m$ circles, is mapped by $F$ onto a disjoint union of $m$ circles in $M_{f(z)}$.
\item [(C10)] If $f$ is monotone  then  $\mathcal{O} = B$ (hence, $M$ is a sub-bundle of $E$).
\item [(C11)] If $E = B\times \Gamma$ and $B$ is locally connected then $\mathcal{O} = B$ (hence, $M$ is a sub-bundle of~$E$ and if $B$ is also connected, then $M$ is a direct product).
\end{enumerate} }

\vspace{2mm}

Properties of the map $F|_M$  are  partially described in  
 Proposition~\ref{P:monot}.   The next result shows that  $\mathcal O \neq B$ is possible and that some circles in a fibre of $M$ over a point in $B\setminus \mathcal O$ can intersect. 
 
 \vspace{2mm}

\noindent {\bf Theorem D.} {\it
 There is a minimal selfmap $f$ of a Cantor set $B$, a connected  graph $\Gamma$ and an extension $(B\times \Gamma, F)$ of $(B,f)$ with  a minimal set $M$ such that, for some $b\in B$, 
\begin{itemize}
\item[$\bullet$] $M_z$ is a circle for each $z \not=b$, and  
\item[$\bullet$] $M_b$ is a union of two circles. 
Depending on the choice of such a system,  the union of any two different circles in any graph 
can appear as the set $M_b$. 
\end{itemize}}

\vspace{2mm}
 
Recall that a set in a Baire space is called \emph{residual} if its complement is of {\it 1st category}, i.e.   a countable union of nowhere dense sets. 
By saying that a \emph{typical} (or {\it generic}) fibre of $M$ has some property we mean that there is a residual set in the base $B$ such that for each $b$ in this residual set, the fibre $M_b$ of $M$ has this property. 

Notice that Theorem~C, part (C4),  describes  a typical fibre of the minimal set $M$ in the case (A2).  Also in the case (A1) we are able to describe a typical fibre of $M$. 

\vspace{2mm}

\noindent {\bf Theorem E.} {\it
Let $M$ be a minimal set (with full projection onto the base) of a fibre-preserving map in a compact graph bundle $(E,B,p,\Gamma)$. Assume that $M$ is nowhere dense.  Then either
\begin{enumerate}
\item [(E1)] a typical fibre of $M$ is a Cantor set, or
\item [(E2)] there is a positive integer $N$ such that a typical fibre of $M$ has cardinality $N$.
\end{enumerate}}
\vspace{2mm}

The number $N$ in (E2) is given by the formula from Proposition~\ref{P:D} in Section \ref{S:proofsDE}. Even if $F$ is a homeomorphism, one cannot claim that all fibres of $M$ have the same cardina\-lity, see examples in the next section.

In the special case when $E$ is a direct product $B\times \Gamma$,   $\Gamma$ is the circle and $F: E\to E$ is a homeomorphism, Theorem E has been known from~\cite[Theorem 6.1]{HY}.

Notice the following asymmetry: in the case (A2) we know from (C4) that a ``non-typical" fibre of $M$ is a union of circles,   while in the  case (A1)  the topological structure of a ``non-typical" fibre is unknown 
even for  the qpf circle homeomorphisms and the triangular maps in the square  
(as Floyd-Auslander systems show, some of these fibres can contain nondegenerate intervals).

The paper is organized as follows. In Section~\ref{S:remarks} we present several 
illustrating examp\-les of  minimal sets of fibre-preserving maps in graph bundles and 
we also prove Theorem D.  Section~\ref{S:prelim} contains some dynamical and topological
preliminaries. Then, in Section~\ref{S:slip} we introduce the key notion
of our paper, namely that of a strongly star-like interior point of
a subset of a graph bundle, and we study the structure of open
neighborhoods of those compact subsets of a fibre which entirely
consist of strongly star-like interior points of a given subset of
the bundle.   The proofs of Theorems~A, C and E  are given in
Sections~\ref{S:proofA}, \ref{S:proofC} and \ref{S:proofsDE}, respectively.

\section{Some examples and proof of Theorem D}\label{S:remarks}

Theorems~C and~E give necessary conditions for subsets of graph bundles to be minimal for a fibre-preserving map. Observe the following.

Suppose that the base $B$ is a singleton and so $E$ is just $\Gamma$. Then  Theorems~C and~E  imply that minimal sets on graphs are finite sets, Cantor sets or disjoint unions of (finitely many) circles. This is already a characterization of minimal sets on graphs, as shown in~\cite{BHS} or~\cite{Mai}. If $B$ is finite (and so the minimal base system  is just a periodic orbit) we get that each fibre of $M$ either is a Cantor set or consists of the same finite number of points or the same finite number of disjoint circles. Again, one can easily show that this is a characterization of minimal sets (with full projection) of fibre-preserving maps in graph bundles with finite base.

However, we do not know how far we are from a  topological characterization of minimal sets (with full projection) of fibre-preserving maps in graph bundles with infinite  base. Indeed,  if typical fibres of  some compact  set $M\subseteq E$ are as described in Theorems~C and~E  (and $M$ has no isolated point,  which  would be an obvious obstacle for $M$ to be minimal)  then it is not easy to
check when there exists a fibre-preserving map $F$ in $E$ such that $M$ is a minimal set of $F$.

\subsection{Examples  of nowhere dense minimal sets}
Only nowhere dense minimal sets can appear if $\Gamma$ is a tree. Say, a triangular map in the square
can have a minimal set which is the direct product of a Cantor set with itself. More interesting
are the following examples of nowhere dense minimal sets  which are not totally disconnected.

\begin{example}[\emph{Floyd-Auslander minimal sets}].\label{E:FA-like}
By the extension lemma from~\cite{KS1} one can extend any Floyd-Auslander minimal system $(M,H)$ (see~\cite{HJ}) to a triangular map defined on the product of the Cantor set (the projection of $M$) and a compact interval. Though in this example $H$ is a homeomorphism on $M$, it is not true in general that if $f$ is a homeomorphism then $F|_M$ is monotone --- to see it, replace $(M,H)$ in this construction by a  noninvertible modification of it from~\cite{SS}. Other examples can be obtained in a similar way, by replacing a Floyd-Auslander minimal system by some other cantoroids (for the definition of a cantoroid see~\cite{BDHSS}). \hfill $\square$
\end{example}

\begin{example}[\emph{Boundary of the M\"obius band as a minimal set}].\label{E:Mobius}
Imagine, in $\mathbb R^3$, a circle $\mathbb S^1$ in a horizontal plane and a vertical straight line segment $I$ whose center is a point of $\mathbb S^1$ and the length of $I$ is smaller than the radius of $\mathbb S^1$. By moving $I$ periodically along $\mathbb S^1$ in such a way that the center of $I$ is always in $\mathbb S^1$ and during one period, when the center of $I$ comes back to its initial position, we turn $I$ upside down to obtain the M\"obius band $E$. Here $E$ is an interval bundle, $\mathbb S^1$ being the base space and the positions of $I$ being the fibres over points of $\mathbb S^1$. The described movement, when considering time from $-\infty$ to $+\infty$, gives a flow on $E$ and each time-$t$ map of this flow is a fibre-preserving map on $E$.

We can move $I$ in such a way that for the time-1 map $F$ of the mentioned flow, the restriction  $f=F|_{\mathbb S^1}$ is an irrational rotation, by some angle $\alpha$, of $\mathbb S^1$. Hence $\mathbb S^1$ is a minimal set of $F$. Then the boundary $\partial E$ of $E$ is also a minimal set of $F$, since the restriction of $F$ to $\partial E$ is conjugate to $\alpha/2$ rotation of the circle.

Notice that  the simple closed curve  $\partial E$ is a sub-bundle of $E$ (the fibre having car\-di\-na\-li\-ty~$2$) but it is not a direct product of the base space $\mathbb S^1$ with a two-point set. \hfill $\square$
\end{example}

\begin{example}[\emph{Sturmian minimal sets}]. \label{E:Sturmian}
Consider a Sturmian minimal system $({\frak S}, \sigma)$, see e.g. \cite[pp. 200--203]{V}, satisfying the following properties: it is a minimal subshift of $\{0,1\}^\mathbb Z$ and it is an almost one-to-one extension of a system $(\mathbb S^1; \rot_{\alpha})$, where $\mathbb S^1$ is the circle and $\rot_{\alpha}$ is an irrational rotation. More precisely, if
$\Sigma : ({\frak S}, \sigma) \to (\mathbb S^1; \rot_{\alpha})$ is the corresponding factor map,
then there is a countable dense set $D\subset \mathbb S^1$ such that for all $z\in \mathbb
S^1 \setminus D$ the fibre $\Sigma ^{-1}(z)$ consists of just one point of ${\frak S}$ and for all $z\in D$
the fibre $\Sigma ^{-1}(z)$ consists of two points of ${\frak S}$. We may think of ${\frak S}$ as being a minimal set
of a fibre-preserving map in  $\mathbb S^1 \times [0,1]$, whose base map is $\rot_{\alpha}$. Let us explain this.

The point inverses of $\Sigma$ are the fibres of the mentioned almost 1-to-1 extension and the homeomorphism $\sigma$ sends fibres to fibres. Topologically, ${\frak S}$ is a Cantor set (since the Sturmian system is an uncountable minimal subshift) and so we may assume that ${\frak S}\subseteq [0,1]$. Consider the map $H: {\frak S} \to \mathbb S^1 \times [0,1]$ sending $s \in {\frak S}$ to $(\Sigma (s), s) \in \mathbb S^1 \times [0,1]$. Then $H$ is continuous and injective, so it is an embedding of the set ${\frak S}$ into the cylinder $\mathbb S^1 \times [0,1]$. Moreover, vertical fibres of the Cantor set $H({\frak S})\subseteq \mathbb S^1 \times [0,1]$ correspond to point inverses of $\Sigma$ which means that $H$ induces fibre-preserving dynamics on $H({\frak S})$ which is topologically conjugate to $\sigma$.

Again, by the extension lemma from~\cite{KS1}, one can extend this dynamics on $H({\frak S})$ to a fibre-preserving map $F: \mathbb S^1 \times [0,1] \to \mathbb S^1 \times [0,1]$. Then $H({\frak S})$ is a minimal set of $F$ having singleton fibres with the exception of countably many fibres, each of them consisting of two points. \hfill $\square$
\end{example}

\subsection{Examples of minimal sets with nonempty interior}\label{SS:(A2)}
This case can occur only if  the graph $\Gamma$ contains a circle. As an example,  consider an irrational rotation of the torus ($M$ is the whole torus).
To produce some more general ``direct product" examples  with $B$ being a general compact metric space admitting a minimal map, one can use Proposition~\ref{P:extension} and Corollary~\ref{C:m circles} below.

To prove Proposition~\ref{P:extension}, let us start by recalling a theorem due to H. Weyl (see e.g.~\cite[Chapter I, Theorem~4.1]{KN}) saying that if $(a_n)_{n=1}^\infty$ is a sequence of distinct integers then for almost all (with respect to the Lebesgue measure) real numbers $x$ the sequence $(a_n x)_{n=1}^\infty$ is uniformly distributed modulo 1. As an obvious consequence of this theorem we get that for any sequence of positive integers $n_1 < n_2 < \dots$ there is an angle $\alpha$ such that the rotation $g$ of $\mathbb S^1$ by the angle $\alpha$ is \emph{minimal with respect to the sequence $(n_k)_{k=1}^\infty$}. This means that for every $s\in \mathbb S^1$ the set  $\{g^{n_k}(s) : \, k=1,2,\dots\}$ is dense in $\mathbb S^1$. Of course, any such rotation $g$ is necessarily irrational.

The following simple proposition dealing with direct product maps (rather than with skew product minimal systems as for instance in~\cite{GW}) is, though not most general possible, sufficient for our purposes. We present here a short proof, based on the Weyl's theorem mentioned above.

\begin{proposition}\label{P:extension}
Let $(B,f)$ be a minimal dynamical system, $B$ being a metric space. Then there exists an irrational rotation $g$ of the circle $\mathbb S^1$ such that the direct product system $(B\times \mathbb S^1, f\times g)$ is minimal.
\end{proposition}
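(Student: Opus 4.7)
My strategy is to choose $\alpha$ so that the product $(B\times \mathbb{S}^1, f\times g_\alpha)$ is transitive at a single distinguished point, and then to upgrade transitivity to full minimality by exploiting the fact that the fibre-wise rotations $R_\beta(b,s):=(b,s+\beta)$ commute with $f\times g_\alpha$.

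Fix any base point $b_0\in B$. Since $B$ is a compact metric space, it has a countable basis $\{U_j\}_{j\in\mathbb{N}}$ of nonempty open sets. Minimality of $(B,f)$ forces every return-time set $N_j:=\{n\ge 0:f^n(b_0)\in U_j\}$ to be infinite, hence a sequence of distinct positive integers. By the consequence of Weyl's theorem recalled just before the statement, for each $j$ the set of $\alpha\in\mathbb{R}$ for which $(n\alpha)_{n\in N_j}$ is uniformly distributed modulo~$1$ has full Lebesgue measure; intersecting countably many such sets and discarding the rationals still leaves a set of full measure, so I can pick an irrational $\alpha$ with this property simultaneously for every $j$. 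Set $g:=g_\alpha$. For every basis set $U_j$ and every nonempty open arc $V\subseteq \mathbb{S}^1$, density of $(n\alpha \bmod 1)_{n\in N_j}$ in $\mathbb{S}^1$ gives some $n\in N_j$ with $n\alpha\in V \pmod 1$, and therefore $(f\times g)^n(b_0,0)\in U_j\times V$. Hence the orbit of $(b_0,0)$ is dense and $(B\times \mathbb{S}^1,\,f\times g)$ is transitive.

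To pass from transitivity to minimality, let $(b,s)\in B\times \mathbb{S}^1$ be arbitrary. Minimality of $(B,f)$ yields a sequence $n_k\to\infty$ with $f^{n_k}(b)\to b_0$, and compactness of $\mathbb{S}^1$ lets me assume (after passing to a subsequence) that $s+n_k\alpha\to s'$ for some $s'\in \mathbb{S}^1$; hence $(b_0,s')$ lies in the $\omega$-limit set of $(b,s)$. The homeomorphism $R_{s'}$ commutes with $f\times g$, so it sends the dense orbit of $(b_0,0)$ to the orbit of $(b_0,s')$, which is therefore also dense. Since $\omega(b,s)$ is closed and $(f\times g)$-invariant, it contains the orbit closure of $(b_0,s')$, namely $B\times \mathbb{S}^1$, and hence equals $B\times \mathbb{S}^1$; in particular the orbit of $(b,s)$ is dense. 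The step I expect to be the main obstacle is precisely this final upgrade: Weyl alone gives density only from one chosen base point, and it is the fibre-wise translation symmetry $R_\beta$, combined with the minimality of the base, that is needed to spread a single dense orbit to every point of $B\times \mathbb{S}^1$.
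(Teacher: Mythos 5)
Your argument is correct, and it reaches the conclusion by a route that differs in one essential idea from the paper's. The paper fixes a single point $x_0\in B$ and a return sequence $n_1<n_2<\dots$ with $f^{n_k}(x_0)\to x_0$, then applies Weyl's theorem \emph{once} to $(n_k)$ to get a rotation $g$ for which $\{g^{n_k}(y):k\ge 1\}$ is dense for \emph{every} $y\in\mathbb S^1$. Because the base coordinate returns to $x_0$ along $(n_k)$, this immediately gives $\omega_F(x_0,y)\supseteq\{x_0\}\times\mathbb S^1$ for every $y$; forward invariance of $\omega_F(x_0,y)$ plus density of the $f$-orbit of $x_0$ then forces $\omega_F(x_0,y)=B\times\mathbb S^1$, and the compactness argument for an arbitrary $(x,s)$ is as in your last paragraph. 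You instead apply Weyl to \emph{countably many} return sequences $N_j$ (one for each basic open set of $B$) to guarantee density of the orbit of the single point $(b_0,0)$, and then you repair the fact that this pins down only one starting phase by invoking the commuting fibrewise rotation $R_{s'}$ to transport density from $(b_0,0)$ to any $(b_0,s')$. The paper's choice of $(n_k)$ as a self-return sequence of $x_0$ is precisely what makes the phase $y$ harmless from the start, so no symmetry argument is needed there; your use of the $\mathbb S^1$-action is a clean alternative that achieves the same effect. Both proofs are valid, and the final step (an $\omega$-limit set that contains a point with dense orbit must be the whole space) is common to both.
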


\begin{proof}
Fix $x_0 \in B$ and positive integers $n_1 < n_2 < \dots$ such that $f^{n_k}(x_0) \to x_0$ when $k\to \infty$. By the Weyl's theorem, there is an irrational rotation $g$ of $\mathbb S^1$ such that
for every $s\in \mathbb S^1$ the set  $\{g^{n_k}(s) : \, k=1,2,\dots\}$ is dense in $\mathbb S^1$. We claim that $F=f\times g$ is minimal. It is sufficient to prove that the $\omega$-limit set $\omega_F(x,s) = B\times \mathbb S^1$ for every $(x,s)\in B\times \mathbb S^1$.

From the choice of $x_0$ and $g$ it follows that for every $y\in \mathbb S^1$, $\omega_F(x_0,y) \supseteq \{x_0\}\times \mathbb S^1$. Since the $f$-orbit of $x_0$ is dense in $B$ and $F(\omega_F(x_0,y))\subseteq \omega_F(x_0,y)$ and $g$ is surjective, the closed set $\omega_F(x_0,y)$ contains the union of a dense family of fibres. We have thus proved that $\omega_F(x_0,y) = B\times \mathbb S^1$ for every $y\in \mathbb S^1$.

Now fix any point $(x,s)\in B\times \mathbb S^1$. Since $\omega_f(x)=B$ and $\mathbb S^1$ is compact, the set $\omega_F(x,s)$ contains at least one point $(x_0, y)\in \{x_0\}\times \mathbb S^1$. Then $\omega_F(x,s) \supseteq \omega_F(x_0,y) = B\times \mathbb S^1$. \hfill $\square$
\end{proof}

\begin{corollary}\label{C:nove}
Let $E = B\times \Gamma$ be a graph bundle such that $B$ is a compact metric space admitting a minimal map and $\Gamma$ be a graph containing a circle $C$.  Then there exists a fibre-preserving map $F: E\to E$ such that $B\times C$ is a minimal set of $F$. \end{corollary}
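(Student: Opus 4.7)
The plan is to combine Proposition~\ref{P:extension} with a continuous retraction of $\Gamma$ onto $C$. First, I would invoke Proposition~\ref{P:extension} for the minimal base $(B,f)$ to obtain an irrational rotation $g$ of $\mathbb S^1$ making $f\times g$ minimal on $B\times\mathbb S^1$. Fixing a homeomorphism $\mathbb S^1\to C$ and transporting $g$ through it, I obtain a minimal self-homeomorphism $\tilde g:C\to C$ such that $f\times\tilde g$ is minimal on $B\times C$.

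Then I would construct a continuous retraction $r:\Gamma\to C$ and define
\[
F(x,y):=\bigl(f(x),\,\tilde g(r(y))\bigr),\qquad (x,y)\in B\times\Gamma.
\]
This $F$ is manifestly continuous and fibre-preserving; since $\tilde g\circ r$ takes values in $C$, one has $F(B\times\Gamma)\subseteq B\times C$, so $B\times C$ is a closed $F$-invariant subset of $E$. Because $r$ restricts to the identity on $C$, $F|_{B\times C}$ coincides with $f\times\tilde g$ and is therefore minimal, whence $B\times C$ is a minimal set of $F$.

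The only step requiring care is producing the retraction $r$. I would give $\Gamma$ a CW-decomposition in which $C$ is a subcomplex, subdividing the arc structure of $\Gamma$ if necessary, set $r$ to be the identity on every closed cell lying in $C$, and assign to each vertex $v\notin C$ an arbitrary point $r(v)\in C$. For each edge $e\not\subseteq C$ with endpoints $u,v$, I would extend $r$ across $e$ by any continuous path in $C$ from $r(u)$ to $r(v)$; such paths exist because $C$ is path-connected. The resulting map is continuous on each closed edge and the definitions agree at every vertex, so $r:\Gamma\to C$ is a well-defined continuous retraction.

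No serious obstacle is expected; the modest technical point is simply verifying the existence of $r$, which the edge-by-edge construction above supplies in full generality.
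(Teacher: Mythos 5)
Your proof is correct and follows essentially the same route as the paper: apply Proposition~\ref{P:extension} to get a minimal product map on $B\times C$, compose with a retraction $r:\Gamma\to C$, and set $F=f\times(\tilde g\circ r)$. The paper simply cites the existence of the retraction without spelling out the edge-by-edge construction, but the underlying argument is identical.
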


\begin{proof}
Using Proposition~\ref{P:extension} extend a minimal map $f: B\to B$ to a minimal map $f\times g : B\times C \to B\times C$.
Then use the fact that there is a retraction $r: \Gamma \to C$ and put $F = f\times (g\circ r)$.  \hfill $\square$
\end{proof}

However, for a general (i.e., not direct product) graph bundle $(E, B, p, \Gamma)$, where $B$ is a compact metric space admitting a minimal map and $\Gamma$ contains a circle, the existence of fibre-preserving maps having  minimal sets with nonempty interior  is not clear at all. For instance, already the construction of  such a minimal homeomorphism on the Klein bottle is not easy, see~\cite{Ell} or~\cite{Parry}. We do not know whether in any graph bundle which is not a tree bundle and whose base admits a minimal map there exists a fibre-preserving map having a minimal set with nonempty interior.

Recall that  $(X,f)$ is a \emph{totally minimal} system if $(X,f^n)$ is minimal for  $n=1,2,\dots$.

\begin{corollary}\label{C:m circles}
Let $(B,f)$ be a totally minimal dynamical system, $B$ being a metric space. Let $\Gamma$ be a graph which  contains $m$ disjoint circles. Denote the union of these circles by $S$. Then there exists a continuous map $h: \Gamma \to \Gamma$ such that $B\times S$ is a minimal set in the direct product system $(B\times \Gamma, f\times h)$.
\end{corollary}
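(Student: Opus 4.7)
The plan is to mimic the construction in the proof of Corollary~\ref{C:nove}, but now making $h$ cyclically permute the $m$ circles of $S$ rather than collapsing everything onto one of them. The key observation is that if $h|_S$ cyclically permutes $C_1,\dots,C_m$ then, under $f\times h$, the direct product $B\times S$ decomposes into the $(f\times h)$-orbit of $B\times C_1$, and the whole system is minimal iff $(B\times C_1, (f\times h)^m)$ is minimal; since $(f\times h)^m = f^m\times h^m$ and $(B,f^m)$ is minimal by total minimality of $(B,f)$, this last condition can be arranged via Proposition~\ref{P:extension}.

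Concretely, I would first fix homeomorphisms $\phi_i\colon \mathbb{S}^1\to C_i$, $i=1,\dots,m$, and, applying Proposition~\ref{P:extension} to the minimal system $(B,f^m)$, pick an irrational rotation $g$ of $\mathbb{S}^1$ such that $(B\times\mathbb{S}^1, f^m\times g)$ is minimal. I would then define $h$ on $S$ by $h|_{C_i}=\phi_{i+1}\circ\phi_i^{-1}\colon C_i\to C_{i+1}$ for $i=1,\dots,m-1$ and $h|_{C_m}=\phi_1\circ g\circ\phi_m^{-1}\colon C_m\to C_1$. A direct calculation gives $h^m|_{C_1}=\phi_1\circ g\circ \phi_1^{-1}$, conjugate to~$g$ via $\phi_1$.

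Next I would extend $h$ continuously from $S$ to all of $\Gamma$. This is an elementary exercise: for each arc in $\Gamma\setminus S$, its endpoints have $h$-values already prescribed in $\Gamma$, and since each connected component of $\Gamma$ is path-connected one can define $h$ along the arc by choosing a path in $\Gamma$ joining those values; components of $\Gamma$ disjoint from $S$ can be collapsed to any single point of $S$. (When $\Gamma$ is disconnected, one must additionally label the $C_i$ so that the cyclic permutation respects the partition of $\{C_1,\dots,C_m\}$ induced by the components of $\Gamma$; this is automatic when $\Gamma$ is connected.)

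Finally, the minimality check is routine: $h(S)=S$ ensures $(f\times h)(B\times S)\subseteq B\times S$; the restriction of $(f\times h)^m=f^m\times h^m$ to $B\times C_1$ is conjugate via $\id\times\phi_1$ to $f^m\times g$ on $B\times\mathbb{S}^1$, hence minimal by the choice of~$g$; and since $B\times S = \bigsqcup_{i=0}^{m-1}(f\times h)^i(B\times C_1)$ is a disjoint cyclic decomposition into closed sets permuted cyclically by $f\times h$, a standard argument yields minimality of $(B\times S, f\times h)$. The only genuinely delicate step is the continuous extension of $h|_S$ to $\Gamma$; it is the main potential obstacle, and it becomes nontrivial only when $\Gamma$ is disconnected.
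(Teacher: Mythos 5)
Your proposal is essentially the paper's proof: both apply Proposition~\ref{P:extension} to $(B,f^m)$ to obtain an irrational rotation $g$, arrange $h|_S$ to cyclically permute the $m$ circles so that $h^m|_{C_1}$ is conjugate to $g$, extend $h$ continuously from $S$ to $\Gamma$ (the paper delegates this to \cite{BHS}, you describe it by hand), and conclude minimality of $B\times S$ from minimality of $(B\times C_1, f^m\times h^m)$. Your caveat about extending over a disconnected $\Gamma$ is a real subtlety that the paper's one-line appeal to \cite{BHS} also glosses over; note, however, that if the components of $\Gamma$ carry unequal numbers of the circles $C_i$, then no relabeling makes a single $m$-cycle respect the partition by components, so the relabeling fix you sketch does not always succeed (this affects the paper's proof equally).
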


\begin{proof}
Let $g$ be the irrational rotation by angle $\alpha$, which can be assigned to the minimal system $(B,f^m)$ by Proposition~\ref{P:extension}. Fix a circle $C$ in $S$. Let $\widetilde g$ be the map $S\to S$ whose restriction to $C$ is conjugate to $g$ and which is identity on $S\setminus C$. Then compose $\widetilde g$ with a homeomorphism on $S$, which cyclically permutes the $m$ circles in $S$. Finally, extend the selfmap of $S$ obtained in such a way to a continuous selfmap $h$ of $\Gamma$ (this is possible, see e.g. \cite{BHS}). By Proposition~\ref{P:extension}, the set $B\times C$ is minimal for $(f\times h)^m = f^m \times h^m$ since $h^m|_C$ is conjugate to $g$. Then $B\times S$ is minimal for $f\times h$.   \hfill $\square$
\end{proof}

\begin{example}[\emph{Torus attached to the boundary of the M\"obius band as a minimal set}].\label{E:Mobius modified}
We construct a space $E$ similarly as the M\"obius band in Example~\ref{E:Mobius} with only one difference -- now, instead of moving the straight line segment $I$ along the circle $\mathbb S^1$, we move the graph $\Gamma$ which is the segment $I$ with two identical circles attached to $I$ at the endpoints of $I$ in such a way that the intersections of the circles with the straight line segment joining the centers of the circles are the endpoints of $I$. We assume that the diameter of $\Gamma$ is smaller than that of~$\mathbb S^1$. So, $E$ is a M\"obius band whose boundary simple closed curve is replaced by a topological torus $\mathbb T^2$. 

As in Example~\ref{E:Mobius}, we consider the time-1 map $F$ of the flow induced by the mentioned ``movement" and put $f=F|_{\mathbb S^1}$, an irrational rotation of $\mathbb S^1$ by some angle $\alpha$. The map $F$ is fibre-preserving and  we are going to extend it to a fibre-preserving continuous map $G:E\to E$ for which the torus $\mathbb T^2$ will be a minimal set.

Let $\varphi : \Gamma \to \Gamma$ be any continuous map such that the points of $\Gamma$ which are symmetrical with respect to the center of $I$ are mapped to symmetrical points (hence the center of $I$ is a fixed point) and the restriction of $\varphi$
to each of the two circles in $\Gamma$ is an irrational rotation. The symmetry condition requires that both circles rotate by the same angle $\beta$ and with the same ``orientation". Further, let $\Phi: E\to E$ be a continuous map which maps each of the fibres of $E$ into itself in such a way that the restriction of $\Phi$ to each of the fibres is an isometric copy of $\varphi$ (the fibres of $E$ are isometric to $\Gamma$). Simply, in one of the fibres we choose an orientation of the circles (the same orientation), hence also the ``orientation" of the $\beta$-rotations on them. The continuity of $\Phi$ then determines the ``orientation" of the rotations on the circles in all other fibres. (Since we have the same orientation of the circles in $\Gamma$, one can see that this is a correct construction, we really get a well defined map $\Phi$.)

Put $G=  \Phi \circ F$. Then $G$ is a fibre-preserving map on the graph-bundle $E$ and the restriction of $G$ to the torus $\mathbb T^2$ is a double rotation -- irrational $\alpha/2$-rotation in one direction and $\beta$-rotation in the other direction. Now we restrict ourselves to  $\beta$ for which $G$ is a minimal map on $\mathbb T^2$. 
Notice that, in contrast to Corollary  \ref{C:m circles},  the obtained minimal set   $\mathbb T^2$  is not a direct product of the base space $\mathbb S^1$ with a union of  circles. \hfill $\square$
\end{example}

\subsection{Proof of Theorem D}\label{SS:(D)}

Given a set $A\subseteq \mathbb R^k$ and a vector $v\in \mathbb R^k$, by $A+v$ we mean the set $\{a+v:\, a\in A\}$. 

 \vspace{2mm}

\noindent {\bf Theorem D.} {\it
 There are a minimal selfmap $f$ of a Cantor set $B$, a connected  graph $\Gamma$ and an extension $(B\times \Gamma, F)$ of $(B,f)$ with  a minimal set $M$ such that, for some $b\in B$, 
\begin{itemize}
\item[$\bullet$] $M_z$ is a circle for each $z \not=b$, and  
\item[$\bullet$] $M_b$ is a union of two circles. 
Depending on the choice of such a system,  the union of any two different circles in any graph 
can appear as the set $M_b$. 
\end{itemize}}

\begin{proof} 

\underline{\it Case I:}\ {\it  $M_b$ is union of two disjoint circles}.

Let $(C,f)$, with $C$ being a subset of the real line, be a Cantor minimal system such that one point has two pre-images
and all the other points have only one pre-image each. Such systems appear for instance in symbolic  and interval dynamics.  It will be convenient to give an explicit construction of such a system 
in order to introduce  the notation which will be used throughout the whole proof.  Start with the dyadic adding machine on the Cantor ternary set. Recall that it is often viewed as a restriction of an interval map to the invariant Cantor set, usually a restriction of the map shown for instance in~\cite[Fig. 1]{SS}; notice that then the adding machine is increasing at each point except at the rightmost one where it is decreasing. Choose a point $a$ in this Cantor set which does not belong to the countable set consisting of the endpoints of the contiguous intervals (including the leftmost and the rightmost points of the Cantor set). Hence the points $a_{-j}:= f^{-j}(a)$, $j=1,2,\dots$ do not belong to this countable set, too. Now blow up the backward orbit of $a$,  i.e.,  for $j=1,2,\dots$, replace the point $a_{-j}$ by a compact interval with length $L_{-j}$ with convergent sum $\sum _{j=1}^{\infty} L_{-j}$ and remove the interior of this interval. This means that the points $a_{-j}$, $j=1,2,\dots$ are ``doubled", i.e. replaced by pairs of points $a_{-j}^- < a_{-j}^+$. What we get is again a Cantor set. Consider the dynamics on it which is inherited from the adding machine, except for the ``new" points $a_{-j}^-, a_{-j}^+$, $j=1,2,\dots$ where we still need to define the dynamics. To this end, send both $a_{-1}^-$ and $a_{-1}^+$ to $a$ and, since the adding machine is increasing at each $a_{-j}$ and we want a continuous dynamics, for $j=2,3,\dots$ send $a_{-j}^-$ to $a_{-j+1}^-$ and $a_{-j}^+$ to $a_{-j+1}^+$. The map defined in such a way is continuous and the system is minimal.

Recall that, up to a homeomorphism, there is only one Cantor set and it is homogeneous. Therefore, no matter which of the Cantor minimal systems $(C,f)$ (such that one point has two pre-images and all the other points have only one pre-image) we choose, we may think of $C$ as a Cantor set on the real line, with the point having two pre-images being for instance the rightmost point of~$C$. For the same reason we can also assume that the two-preimages, denote them $c_l < c_r$, are the endpoints of a contiguous interval (this is important for geometry of our construction below).

Applying Proposition~\ref{P:extension} we
extend $(C,f)$ to a minimal system $(C\times S_1, f\times g)$ where
$g$ is an irrational rotation of the circle $S_1 = \{(y,z)\in
\mathbb R^2:\, y^2 +z^2 = 1\}$. Denote by $a_1$ and $b_1$ the
$g$-images of the points $(0,1)$ and $(0,-1)$, respectively. Let
$J_1$ be one of the half-circles determined by $a_1, b_1$.

The set $C$ is the union of $C_L = \{x\in C:\, x\leq c_l\}$ and $C_R = \{x\in C:\, x\geq c_r\}$.
Put $C_R^{-} = C_R - (c_r-c_l)$. Then $C_L \cup C_R^{-}$ is a Cantor set with
$C_L \cap C_R^{-} = \{c_l\}$. Further put $S_2 = S_1 + (0,3)$, $a_2 = a_1 + (0,3)$, $b_2 = b_1 + (0,3)$
and $J_2 = J_1 + (0,3)$. Finally, denote $M= (C_L \times S_1) \cup (C_R^{-} \times S_2)$. The dynamical
system $(C\times S_1, f\times g)$ induces in a natural way a (minimal) dynamical system $(M,F)$ which
is topologically conjugate to $(C\times S_1, f\times g)$ and is obtained from $(C\times S_1, f\times g)$
by just replacing $(C_R\times S_1)$ by its translate $(C_R^{-}\times S_2)$, `without changing dynamics'.
In the new system $(M,F)$ the map $F$ preserves `vertical' fibres; the fibre over $c_l$ consists of two
circles, each of the other fibres is just a circle. Denote by $\varphi$ the base map of $F$.
It is clear that $(M,F)$ can be considered as a minimal extension of the
dynamical system $(C_L \cup C_R^{-}, \varphi)$ obtained from $(C,f)$ by identifying
points $c_l$ and $c_r$. Let
$\Gamma = S_1 \cup I \cup S_2$ where $I \subseteq \mathbb R^2$ is the `vertical' interval with
end-points $(0,1)$ and $(0,2)$. Put $E=(C_L\cup C_R^{-}) \times \Gamma$. Then $\Gamma$ is a connected
graph and $E$ is a graph bundle with fibre $\Gamma$.

We claim that the map $F$ can be extended to a continuous fibre-preserving map $G: E\to E$.
We are going to define $G$. Of course, $G|_{M} = F$. Further, for $x\in C_L \setminus \{c_l\}$
and $(y,z)\in S_2$ put $G(x,y,z) = F(x, y, z-3)$ and for $x\in C_R^{-} \setminus \{c_l\}$ and $(y,z)\in S_1$
put $G(x,y,z) = F(x, y, z+3)$. So, $G$ is already defined on $(C_L\cup C_R^{-})\times (S_1 \cup S_2)$.
It remains to define $G$ on $(C_L\cup C_R^{-}) \times (I\setminus \{(0,1), (0,2)\})$. So, fix $x\in C_L\cup C_R^{-}$. Then
$G(\{x\}\times (S_1\cup S_2)) = \{\varphi(x)\}\times S_i$ for some $i\in \{1,2\}$. Further,
$G(x,0,1) = \{\varphi(x)\}\times \{a_i\}$ and $G(x,0,2) = \{\varphi(x)\}\times \{b_i\}$. For $1<z<2$
let $G(x,0,z)$ be the point of $\{\varphi(x)\}\times J_i$ such that the length of the sub-arc of
$\{\varphi(x)\}\times J_i$ with end-points $\{\varphi(x)\}\times \{a_i\}$ and $G(x,0,z)$ equals $\pi \cdot (z-1)$.

Then $G$ maps $E$ continuously onto its unique  minimal set $M$. Here $M_{c_l}$ is the
union of two circles and $M_b$ for $b\neq c_l$ is a circle. So, $M$ is not a sub-bundle of $E$. 

\underline{\it Case II:}\ {\it  $M_b$ consists of two arbitrarily intersecting circles whose union is a graph}. 

Before giving such a construction we wish to mention that if $E$ were not required to be a graph bundle, it would be sufficient to consider a skew product minimal map on the pinched torus from~\cite{BKS}. In that example, one fibre is ``figure eight" (two circles intersecting in one point), all the other fibres are circles (simple closed curves).

The union  $P\cup Q$ of disjoint sets  will sometimes be denoted by $P\sqcup Q$. We will also keep   the notations from {Case I.}  
Starting with the minimal system $(C\times S_1, f\times g)$ we are going to produce a fibre-preserving selfmap $G^*$ of a direct product graph bundle $E^*\subseteq \mathbb R^3$ with the following properties 
\begin{enumerate}
\item $E^*= (C_L\cup C_R^{-})\times \Gamma^*$
\item $\Gamma^* = S_1 \cup S_1^*$ where $S_1$ is the ``geometrical" circle  $y^2+z^2=1$ and $S_1^*$ is a ``topological" circle (i.e., a simple closed curve) such that
    \begin{itemize}
    \item[$\bullet$] $\emptyset \neq S_1\cap S_1^*\neq S_1$ has finitely many connected components (just because we want $E^*$ to be a graph bundle, i.e. $\Gamma^*$ has to be a graph),
    \item[$\bullet$] $S_1^*$ is a subset of the closed disc bounded by the circle $S_1$ and each radius of $S_1$ contains exactly one point of $S_1^*$.
    \end{itemize}
\item $M^*= (C_L \times S_1) \cup (C_R^{-} \times S_1^*)$ is a minimal set for $G^*$.
\end{enumerate}
Note that each fibre of $M^*$ consists of one circle, except of $M^*_{c_l}$ which consists of two intersecting circles $\{c_l\}\times \{S_1\}$ and $\{c_l\}\times \{S_1^*\}$. Though the only restrictions for the choice of $S_1^*$ are those in~(2), let us explicitly mention three simplest cases:
\begin{enumerate}
\item[($12_1$)] $S_1$ and $S_1^*$ intersect just in one point (hence $M^*_{c_l}$ is homeomorphic to the ``figure eight"), or
\item[($12_2$)] $S_1$ and $S_1^*$ intersect in an arc ($M^*_{c_l}$  is homeomorphic to the ``figure $\Theta$"), or
\item[($12_3$)] $S_1$ and $S_1^*$ intersect in two points.
\end{enumerate}

\smallskip

So, we have the minimal system $(C\times S_1, F_1)$, where $C=C_L \sqcup C_R$ is a Cantor set on the $x$-axis with $\max C_L = c_l < c_r = \min C_R$ and $F_1 = f\times g$. We are going to construct $(E^*, G^*)$ as above. Fix $S_1^*$ as in (2). Denote by $\alpha$ the projection of $S_1$ onto $S_1^*$ along the radii of $S_1$ (hence $\alpha$ is identity on $S_1\cap S_1^*$) and by $\sigma$ the map
$$
\sigma (x,y,z):= \begin{cases} (x,y,z), & \text{if $(x,y,z) \in C_L\times S_1$}, \\
                               (x, \alpha (y,z)), & \text{if $(x,y,z) \in C_R\times S_1$}.
      \end{cases}
$$
Then $\sigma: C\times S_1 \to (C_L \times S_1) \sqcup (C_R \times S_1^*)$ is a homeomorphism and so the map
$$
F_1^*:= \sigma \circ F_1 \circ \sigma^{-1},
$$
being topologically conjugate to $F_1$, is a continuous \emph{minimal} selfmap
of $(C_L \times S_1) \sqcup (C_R \times S_1^*)$. Since $f(c_l)=f(c_r) = \max C$, the definition of $F_1^*$ gives that
\begin{equation}\label{E:prve}
F_1^*(\{c_l\}\times S_1) = \{\max C\} \times S_1^* = F_1^*(\{c_r\}\times S_1^*),
\end{equation}
\begin{equation}\label{E:druhe}
F_1^*(c_l, y, z)  =  \sigma (F_1 (c_l, y,z)) = \sigma (F_1 (c_r, y,z)) = F_1^*(c_r, \alpha(y,z)) \ \text{for $(y,z)\in S_1$.}
\end{equation}
Then~(\ref{E:prve}) and~(\ref{E:druhe}) imply that
\begin{equation}\label{E:2 kruznice rovnako}
F_1^*(c_l,y,z) = F_1^*(c_r,y,z) \in \{\max C\}\times S_1^*, \quad \text{ for } (y,z)\in S_1\cap S_1^*.
\end{equation}
Now the idea is to identify the pairs of points
$(c_l,y,z), (c_r,y,z)$ where  $(y,z)\in S_1\cap S_1^*$
 with the same $F_1^*$-images and to produce in such a way a new map $F_2^*$ on a new space $M^*$. Since we wish to keep under control the geometry of our example, we proceed geometrically. In view of~(\ref{E:2 kruznice rovnako}), the mentioned pairs of points  become identified if we translate $C_R\times S_1^*$ by the vector $(-(c_r-c_l),0,0)$. Therefore denote $M^*:=(C_L\times S_1) \cup (C_R^{-}\times S_1^*)$ and let $T: (C_L \times S_1) \sqcup (C_R \times S_1^*)  \to M^*$ be defined by
\begin{equation}
T(x,y,z):= \begin{cases}
              (x,y,z),                  & \text{if $(x,y,z) \in C_L\times S_1$}, \\
              (x -(c_r-c_l),y,z),       & \text{if $(x,y,z) \in C_R\times S_1^*$}. \notag
            \end{cases}
\end{equation}
As already indicated, due to~(\ref{E:2 kruznice rovnako}) there is a unique continuous map $F_2^*: M^*\to M^*$ such that the following diagram commutes:
\[
\begin{CD}
(C_L \times S_1) \sqcup (C_R \times S_1^*) @>{F_1^*}>> (C_L \times S_1) \sqcup (C_R \times S_1^*)\\
@V{T}VV @VV{T}V \\
M^* @>{F_2^*}>> M^*
\end{CD}
\]

\smallskip

\noindent A straightforward analysis of the map $F_2^*$ shows that 
a point in $\{c_l\}\times S_1\subseteq M^*$ and a point in $\{c_l\}\times S_1^*\subseteq M^*$ lying on the same radius of the circle $\{c_l\} \times S_1$ have always the same $F_2^*$-image:
\begin{equation}\label{E:images of friends}
F_2^*(c_l, y, z) = F_2^*(c_l, \alpha(y, z)) \quad \text{ whenever } (y,z)\in S_1.
\end{equation}
To finish the study of the properties of $F_2^*$, notice that $F_2^*$ is fibre-preserving and, being a factor of the minimal map $F_1^*$, is also minimal.

Now define $E^*$ and $\Gamma^*$  as in 1. and 2.  at the beginning of the proof of Case II.  To finish our construction, it is sufficient to extend $F_2^*: M^*\to M^*$ to a continuous fibre-preserving map $G^*: E^*\to E^*$. Here is one such extension:
\begin{equation}\label{E:G hviezdicka}
G^*(x,y,z):= \begin{cases} F_2^*(x,y,z), & \text{if $(x,y,z)\in M^*$}, \\
                           F_2^*(x,\alpha^{-1}(y,z)), & \text{if $(x,y,z) \in (C_L\setminus \{c_l\}) \times S_1^*$}, \\
                           F_2^*(x,\alpha(y,z)), & \text{if $(x,y,z) \in (C_R^{-}\setminus \{c_l\}) \times S_1$}.
      \end{cases}
\end{equation}
The definition is correct. In fact, the first and the second case are compatible, because if $(x,y,z)\in M^*$ and simultaneously $x\in C_L\setminus \{c_l\}$ and $(y,z)\in S_1^*$, then $(y,z)\in S_1\cap S_1^*$ and so $(\alpha^{-1})(x,y,z) = (x,y,z)$. Analogously, the first and the third case are compatible. Hence, $G^*:E^*\to E^*$ is a well defined extension of $F_2^*$. It is obviously fibre-preserving. To show that it is continuous, it is sufficient to show that the restrictions of $G^*$ to the \emph{closed} sets $C_L\times (S_1\cup S_1^*)$ and $C_R^{-}\times (S_1\cup S_1^*)$ are continuous. Since the arguments for both cases are analogous, we prove only the continuity of $G^*$ on the former set. It is the union of two \emph{closed} sets $C_L\times S_1$ and $C_L\times S_1^*$ and so the continuity of $G^*|_{C_L\times (S_1\cup S_1^*)}$ follows from the following two facts:
\begin{itemize}
\item[$\bullet$] On the set $C_L\times S_1$, since it is a subset of $M^*$, the map $G^*$ is continuous because it coincides there, by~(\ref{E:G hviezdicka}), with the continuous map $F_2^*$.
\item[$\bullet$]  On the set $C_L\times S_1^*$ the map $G^*$ is also continuous, because it coincides there with the continuous map $F_2^*\circ (\id_{C_L} \times \alpha^{-1})$ where $\id_{C_L}$ is the identity on $C_L$. To see this, first notice that for $x\in C_L\setminus \{c_l\}$ the coincidence works by~(\ref{E:G hviezdicka}). Further, if $(y^*,z^*)\in S_1^*$ then $(c_l, y^*,z^*)\in M^*$ and so, using~(\ref{E:G hviezdicka}) and~(\ref{E:images of friends}) we get $G^*(c_l, y^*,z^*) = F_2^*(c_l,y^*,z^*)  = F_2^*(c_l,\alpha^{-1}(y^*,z^*))$, as required.
\end{itemize}

The construction is now completed. In the case ($12_1$)  it gives  the space $E^*$ made of two ``tubes" $(C_L \cup C_R^{-})\times S_1$ and $(C_L \cup C_R^{-})\times S_1^*$, the second tube lying ``inside" the first one and so they touch ``internally". If one wishes that they touch ``externally", i.e. that $M^*_{c_l}$ is a geometric, not only topological ``figure eight", it is sufficient to use an appropriate conjugacy. Similarly, in ($12_2$) the tubes can "touch externally" along an arc. Also in ($12_3$) we can get that $S_1^*$ is not anymore a subset of the closed disc bounded by the circle $S_1$, but $S_1$ and $S_1^*$ are two geometric circles having two points in common. \hfill $\square$
\end{proof}

%%%%%%%%%%%%%%%%%%%%%%%%%%%%%%%%%%%%%%%%%%%%%%%%%%%%%

\section{Dynamical and topological preliminaries}\label{S:prelim}

%%%%%%%%%%%%%%%%%%%%%%%%%%%%%%%%%%%%%%%%%%%%%%%%%%%%%
For convenience of the reader, we collect below 
several dynamical and topological facts which will be used throughout the rest of the paper. 
The reader should at least pay attention to the concepts of a redundant open set and 
the homeo-part of a minimal system since they are instrumental in the paper.

\subsection{Some basic facts on minimality}\label{SS:minimality}

In this subsection we always assume that $X$ is a compact metric space and $f:X\to X$ is a continuous map.
The facts here, if not obvious, are mostly results from our paper~\cite{KST}. An exception is the equivalence (1) $\Leftrightarrow$ (3) in the below list of equivalent definitions of minimality, which is~\cite[Lemma 3.10]{B}. For the equivalence (1) $\Leftrightarrow$ (2) involving backward orbits one needs to see the proof of Theorem 2.8 in~\cite{KST} (cf.~\cite{Mal}).  

In the introduction we gave two equivalent definitions of minimality (in terms of invariant subsets and in terms of density of forward orbits). For a compact metric space $X$ and a continuous map $f: X\to X$ also the following are equivalent:
\begin{enumerate}
\item[(1)] $(X,f)$ is minimal,
\item[(2)] $f(X)=X$ and every backward orbit of every point in $X$ is dense (by a \emph{backward orbit} of $x_0\in X$ we mean any set $\{x_0, x_1, \dots, x_n, \dots \}$ with $f(x_{i+1})=x_i$ for $i\geq 0$),
\item[(3)] the only closed subsets $A$ of $X$ with $f(A) \supseteq A$ are $\emptyset$ and $X$,
\item[(4)] for every non-empty open set $U\subseteq X$, there is $N\in \mathbb N$ such that $\bigcup_{n=0}^N f^{-n}(U)=X$.
\end{enumerate}

\noindent We will also need some necessary conditions for minimality. If $(X,f)$ is minimal then
\begin{enumerate}
\item [(a)] for every non-empty open set $U\subseteq X$, there is $N\in \mathbb N$ such that $\bigcup_{n=0}^N f^{n}(U)=X$,
\item [(b)] $f$ is \emph{feebly open}, i.e. it sends non-empty open sets to sets with non-empty interior,
\item [(c)] $f$ is \emph{almost one-to-one}, which means that the set $\{x\in X:\, \card f^{-1}(x) = 1\}$ is a $G_{\delta}$-dense set in $X$,
\item [(d)] if $A\subseteq X$ is nowhere dense (dense, of 1st category, of 2nd category, residual) then both $f(A)$ and $f^{-1}(A)$ are nowhere dense (dense, of 1st category, of 2nd category, residual), respectively.
\end{enumerate}

A set $G\subseteq X$ is said to be a \emph{redundant open set for a map $f: X \to
X$} if $G$ is nonempty, open and $f(G)\subseteq f(X\setminus G)$
(i.e., its removal from the domain of $f$ does not change the image
of $f$).  

\begin{lemma}[\cite{KST}]\label{L:zbytocna}
Let $X$ be a compact metric space and $f: X \to X$ continuous.
Suppose that there is a redundant open set for $f$. Then the system $(X,f)$ is not minimal.
\end{lemma}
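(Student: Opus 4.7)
The plan is to argue by contradiction: assume that $(X,f)$ is minimal and that $G \subseteq X$ is a redundant open set, and then produce a proper closed subset of $X$ which is mapped into itself by $f$, violating characterization~(3) of minimality listed in Section~3.1.

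First, I would rewrite the redundancy condition in a more usable form. Since $X = G \cup (X\setminus G)$, we have $f(X) = f(G) \cup f(X\setminus G)$, and the assumption $f(G) \subseteq f(X\setminus G)$ therefore gives
\[
f(X\setminus G) \;=\; f(X).
\]
Next, I would check that $G \neq X$. Indeed, if $G = X$, then $X\setminus G = \emptyset$, so $f(X) = f(X\setminus G) = \emptyset$, forcing $X = \emptyset$ and contradicting the nonemptiness of $G$. Consequently the set $A := X\setminus G$ is a nonempty, proper, closed subset of~$X$.

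Now I would invoke minimality. Since $(X,f)$ is minimal on a compact metric space, $f$ is surjective, i.e.\ $f(X) = X$. Combining this with the identity displayed above yields
\[
f(A) \;=\; f(X\setminus G) \;=\; f(X) \;=\; X \;\supseteq\; A.
\]
Thus $A$ is a nonempty proper closed subset of $X$ with $f(A) \supseteq A$, which directly contradicts the equivalent formulation~(3) of minimality recalled in Section~3.1. Hence no redundant open set can exist when $(X,f)$ is minimal, proving the contrapositive.

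The argument is essentially a one-line observation once the right reformulation of redundancy is in hand; the only small care needed is the quick verification that $G \neq X$, which prevents the degenerate case where the complement $A$ is empty. No geometric or topological subtlety about $X$ beyond compactness (used to ensure $f(X) = X$ under minimality) enters the proof.
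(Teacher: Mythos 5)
Your proof is correct. The paper states this lemma with a citation to \cite{KST} rather than giving a proof, but your argument --- rewriting redundancy as $f(X\setminus G)=f(X)$, using minimality to get $f(X)=X$, and then observing that $A=X\setminus G$ is a nonempty proper closed set with $f(A)\supseteq A$, contradicting characterization (3) of minimality from Subsection~\ref{SS:minimality} --- is the standard route, and you handle the only possible pitfall (the degenerate case $G=X$) cleanly.
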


\subsection{Homeo-part of a minimal system}\label{SS:homeo-part}

\begin{definition}\label{D:homeo-part}
Let $f$ be a continuous selfmap of a compact metric space $X$. Let $H\subseteq X$ be the set of all points
$x_0\in X$ whose full orbit 
$
\left\{x \in X:\, \exists i,j\geq 0 \text{ with } f^i (x) = f^j (x_0) \right\}
$
is of the form $\left\{ \dots ,x_{-2}, x_{-1}, x_0, x_1, x_2, \dots \right \}$ where $f(x_n)=x_{n+1}$ for
every integer $n$. Then the system $(H,f|_H)$ is said to be the \emph{homeo-part of the system $(X,f)$}.
We also shortly say that $H$ is the homeo-part of $f$.
\end{definition}
One can show that $H$ is always a $G_{\delta}$ set (possibly empty).  For minimal maps this is easier to prove and we can say even more. 
\begin{lemma}\label{L:homeo-part is G-delta}
Let $X$ be a compact metric space and $f: X\to X$ be a minimal map.  Then the homeo-part 
 $H$ of $f$  is a dense $G_{\delta}$ set.
\end{lemma}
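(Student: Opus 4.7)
My plan is to exhibit $H$ as the complement of a first-category $F_\sigma$ set, exploiting the almost one-to-one property of minimal maps. Set $X_1 = \{x \in X : \card f^{-1}(x) = 1\}$ and $B := X \setminus X_1$. By property~(c) of Section~\ref{SS:minimality}, $X_1$ is a dense $G_\delta$; consequently $B$ is $F_\sigma$, first category, and has empty interior.

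The first (and, I expect, main) step would be to establish the characterization
\[
x_0 \in H \quad \Longleftrightarrow \quad \text{every point of the full orbit of } x_0 \text{ lies in } X_1.
\]
For the forward direction, suppose the full orbit has the form $\{x_k\}_{k\in\mathbb Z}$ with $f(x_k)=x_{k+1}$, and assume some $x_k$ admits two distinct $f$-preimages; both belong to the full orbit and both forward-map onto $x_k$, so each must coincide with $x_{k-1}$, a contradiction. For the converse, I would use surjectivity of $f$ (a consequence of minimality) to define $x_{-1}, x_{-2}, \dots$ inductively as the unique preimages of $x_0, x_{-1}, \dots$ respectively --- this succeeds at each stage because the relevant point lies in the full orbit and hence, by hypothesis, in $X_1$. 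Any $y$ in the full orbit with $f^i(y)=f^j(x_0)=x_j$ must then equal $x_{j-i}$, as one sees by peeling off one application of $f$ at a time through the uniquely determined chain of preimages inside $X_1$.

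Once the characterization is in place, the condition ``$B$ meets the full orbit of $x_0$'' translates to ``$f^j(x_0) \in f^i(B)$ for some $i,j \geq 0$'', yielding
\[
X \setminus H \;=\; \bigcup_{i,\,j \geq 0} f^{-j}\bigl(f^i(B)\bigr).
\]
Write $B = \bigcup_n C_n$ with each $C_n$ closed; since $B$ has empty interior, each $C_n$ is closed with empty interior, hence nowhere dense. Property~(d) of Section~\ref{SS:minimality}, applied iteratively to $f$ and to $f^{-1}$, then shows that each $f^{-j}(f^i(C_n))$ is closed (a continuous preimage of a compact, hence closed, set) and nowhere dense. Thus $X \setminus H$ is a countable union of closed nowhere dense sets, so $H$ is $G_\delta$ and residual; as $X$ is a Baire space, $H$ is in particular dense.

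The main obstacle, as already signalled, is the characterization step. A tempting shortcut is to identify $H$ with the purely forward set $\bigcap_{n \geq 0} f^{-n}(X_1)$, but in general that set is strictly larger than $H$: a point $x_0$ whose forward iterates all lie in $X_1$ can still have its (otherwise unique) preimage $x_{-1} \in B$, in which case the grand orbit branches backward and $x_0 \notin H$. The double indexing $(i,j)$ in the displayed union captures precisely this backward obstruction; once it is in place, the topological conclusion follows mechanically from property~(d).
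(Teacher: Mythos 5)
Your proof is correct and follows essentially the same route as the paper's: both write $X\setminus H$ as a countable union of closed nowhere dense sets obtained by iterating $f$ forward and backward on the first-category $F_\sigma$ set of points with multiple preimages, then invoke property (d) of Subsection~\ref{SS:minimality}. The paper states the union more compactly as $\bigcup_{n\in\mathbb Z}f^n(D)$ (with $D=B$) and cites \cite[Theorem~2.8]{KST} for residuality of $H$, while you use the equivalent double-indexed union $\bigcup_{i,j\ge 0}f^{-j}(f^i(B))$ and re-derive residuality directly, spelling out the full-orbit characterization that the paper dismisses as ``straightforward to check.''
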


\begin{proof} Set $D= \{x\in X: \card f^{-1}(x) > 1\}$. By~\cite[Theorem~2.8]{KST}, the homeo-part of a minimal map is residual and $D$  is an $F_\sigma$-set of first category.  It is straightforward to check that $H = X\setminus \bigcup_{n=-\infty}^{+\infty} f^n(D)$. By (d) from Subsection \ref{SS:minimality}  we get that  $H$ is $G_\delta$. 
  \hfill 	$\square$
\end{proof}

\begin{lemma}\label{L:noninvertibility f inverse}
Let $X$ be a compact metric space and $f: X\to X$ be a continuous minimal map. Let a set $D=\{ \dots ,x_{-2}, x_{-1}, x_0, x_1, x_2, \dots \}$ be such that $f(x_n)=x_{n+1}$ for every integer $n$ (i.e., $D$ is a union of the forward orbit of $x_0$ and one of the backward orbits of $x_0$). Suppose that there is a point in $D$ that has more than one $f$-preimage in $X$ (or, equivalently, an $f$-preimage in $X\setminus D$). Then $(f|_{D})^{-1}$ is not continuous. 
\end{lemma}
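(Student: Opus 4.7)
My plan starts from the observation that the hypothesis forces $X$ to be infinite: a finite minimal system is a single cyclic permutation orbit in which every point has a unique preimage, contradicting the assumption. An infinite compact minimal system has no periodic points, so the full orbit $D=\{x_n\}_{n\in\mathbb Z}$ consists of pairwise distinct points and $f|_D\colon D\to D$ is a bijection; in particular $(f|_D)^{-1}$ is well defined. Fix $k\in\mathbb Z$ and a point $y\in X\setminus D$ with $f(y)=x_k$; since the only preimage of $x_k$ lying in $D$ is $x_{k-1}$, we have $y\neq x_{k-1}$.

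The idea is then to produce a sequence in $D$ that witnesses the discontinuity of $(f|_D)^{-1}$ at $x_k$. The forward orbit $\{x_n\}_{n\geq 0}$ is dense in $X$ by minimality, so $y$ belongs to its closure. I will show that for every $\varepsilon>0$ the set $\{n\geq 0:d(x_n,y)<\varepsilon\}$ is actually infinite: if it were finite for some $\varepsilon$, then the closure of $\{x_n\}_{n\geq 0}$ intersected with the ball $B(y,\varepsilon/2)$ would be contained in a finite subset of $D$, which cannot contain $y\notin D$, contradicting density. Consequently one can extract indices $n_j\to\infty$ with $x_{n_j}\to y$.

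Applying the continuous map $f$ gives
\[
 x_{n_j+1}=f(x_{n_j})\longrightarrow f(y)=x_k,
\]
while $(f|_D)^{-1}(x_{n_j+1})=x_{n_j}\to y\neq x_{k-1}=(f|_D)^{-1}(x_k)$, so $(f|_D)^{-1}$ is not continuous at $x_k$. The only delicate point is the extraction of the subsequence with $n_j\to\infty$, and there the assumption $y\notin D$ is exactly what prevents the orbit from accumulating at $y$ only through finitely many indices; everything else is a direct consequence of minimality and the bijectivity of $f|_D$.
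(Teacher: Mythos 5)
Your proof is correct and follows essentially the same idea as the paper's: use minimality to produce forward iterates of $x_0$ approximating the ``extra'' preimage $y$ of $x_k$ in $X\setminus D$, then push forward by $f$ to contradict continuity of $(f|_D)^{-1}$ at $x_k$. The only difference is presentational --- you run a sequential discontinuity argument where the paper works with open neighborhoods and argues by contradiction --- and in a metric space these are equivalent; your preliminary observations that $X$ is infinite and hence has no periodic points, making $f|_D$ a bijection, are a welcome and correct justification that $(f|_D)^{-1}$ is even well defined, a point the paper leaves implicit.
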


\begin{proof}
Suppose, on the contrary, that $g:=(f|_{D})^{-1}: D\to D$ is continuous. Without loss of generality we may assume that the mentioned point with two preimages is $x_0$. Denote by $z$ a point in $X\setminus D$ with $f(z)=x_0$. Choose two disjoint open neighborhoods $U_{-1}$ and $U_z$ of the points $x_{-1}$ and $z$, respectively. Denote $V_{-1}:= U_{-1}\cap D$. Due to the continuity of $g$ at the point $x_0$, we can find a neighborhood $U_0$ of $x_0$ such that for $V_0:=U_0\cap D$ we have $g(V_0)\subseteq V_{-1}$. Now use the continuity of $f$ at the point $z$ to get an open neighborhood $U_z^*\subseteq U_z$ of $z$ with $f(U_z^*)\subseteq U_0$. Since $f$ is minimal, there is $k>0$ with $x_k \in U_z^*$, whence $x_{k+1}=f(x_k) \in V_0$. Then $g(x_{k+1}) = x_k \in U_z^*$ which contradicts the facts that $g(V_0)\subseteq V_{-1}$ and $U_z^*$ is disjoint with $V_{-1}$. \hfill 	$\square$
\end{proof}

The next description of properties of the homeo-part of a minimal map follows partially from Theo\-rem~2.8 and its proof in~\cite{KST}. Note that the notion of a full orbit of a point (for a not necessarily invertible map) was introduced in Definition~\ref{D:homeo-part}.

\begin{lemma}\label{L:homeo-part properties}
Let $X$ be a compact metric space and $f: X\to X$ be a continuous minimal map. Let $H\subseteq X$ be the homeo-part of $f$.
Then: 
\begin{enumerate}
\item [(1)]$f(H)=H=f^{-1}(H)$ (equivalently, $H$ is a union of full orbits of the map $f$),
\item [(2)]every point of the set $H$ has just one $f$-pre-image (and this pre-image lies in $H$),
\item [(3)]both $f|_H$ and $(f|_H)^{-1}$ are minimal homeomorphisms $H\to H$,
\item[(4)] $H$ is a $G_{\delta}$ dense subset of $X$,
\item [(5)] $H$ is a \emph{maximal} subset of $X$ with the properties (1)~and~(2),
\item [(6)] $H$ is a \emph{maximal} subset of $X$ with the property (3).
\end{enumerate}
\end{lemma}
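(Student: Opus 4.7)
The plan is to deliver the six properties in an order that exploits their logical dependencies. Property (4) is simply Lemma~\ref{L:homeo-part is G-delta}, and I will freely use the identity $H = X \setminus \bigcup_{n\in\mathbb{Z}} f^n(D)$ with $D = \{x \in X : \card f^{-1}(x) > 1\}$, which was established inside its proof; this identity is the main algebraic gadget.

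For (1) and (2) I will unpack Definition~\ref{D:homeo-part} directly. If $x_0 \in H$ has full orbit $\{\dots, x_{-1}, x_0, x_1, \dots\}$, then the full orbit of $x_1 = f(x_0)$ is the same bi-infinite sequence with indices shifted, giving $f(H) \subseteq H$. The identity above forbids $x_0 \in f(D)$, so $x_{-1}$ is the unique $f$-preimage of $x_0$ in $X$; the same identity forces $x_{-1} \in H$. This yields (2) and $f^{-1}(H) \subseteq H$, while the surjectivity $x_0 = f(x_{-1})$ yields $H \subseteq f(H)$, completing (1).

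For (3), once (1) and (2) are in hand the map $f|_H$ is a continuous bijection, so only the continuity of $g := (f|_H)^{-1}$ requires an argument. This will be a routine compactness/closed-graph step: if $y_k \to y$ in $H$, any subsequential limit $z \in X$ of $\{g(y_k)\}$ satisfies $f(z) = y$, forcing $z = g(y)$ by the uniqueness of preimage from (2). Minimality of $f|_H$ is immediate from density of the forward orbit of any $x_0 \in H$ in $X$ (hence in $H$); minimality of $g$ is the same statement for backward orbits, which are dense by the equivalence (1)$\Leftrightarrow$(2) recalled in Subsection~\ref{SS:minimality}, and which lie in $H$ by its backward invariance.

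For the maximality statements (5) and (6) the strategy in each case is to show that any $H' \supseteq H$ with the required property is already contained in $H$. In (5), properties (1) and (2) on $H'$ force the full $f$-orbit of every $x_0 \in H'$ to be a bi-infinite sequence of distinct points of $X$ (since preimages are unique in $X$, not merely in $H'$), whence $x_0 \in H$ by definition. In (6), given (3) on $H'$, I will apply Lemma~\ref{L:noninvertibility f inverse} to the orbit $D^* \subseteq H'$ obtained by iterating $(f|_{H'})^{-1}$ backward from $x_0$: the restriction $(f|_{D^*})^{-1}$ inherits continuity from the homeomorphism $(f|_{H'})^{-1}$, so by the contrapositive of that lemma no point of $D^*$ can have more than one $f$-preimage in $X$, which again places $x_0$ in $H$. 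The main obstacle I anticipate is not technical but conceptual, namely recognizing that Lemma~\ref{L:noninvertibility f inverse} is precisely the right tool for (6); everything else, including the closed-graph argument for continuity of $(f|_H)^{-1}$, is standard.
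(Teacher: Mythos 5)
Your proof is correct and follows essentially the same route as the paper's: (1)--(2) from the definition, (4) from Lemma~\ref{L:homeo-part is G-delta}, (5) by noting that any added full orbit must contain a point with two preimages, and (6) via Lemma~\ref{L:noninvertibility f inverse}. The only place you genuinely supply more than the paper is (3), where the paper simply cites Theorem~2.8 of~\cite{KST} while you give a self-contained compactness argument for the continuity of $(f|_H)^{-1}$ using the uniqueness of preimages from (2); this is a clean and standard step and works as written. One small slip in the argument for (2): uniqueness of the preimage of $x_0$ follows from $x_0 \notin D$ (the $n=0$ term of the identity $H = X\setminus\bigcup_{n\in\mathbb Z} f^n(D)$), not from $x_0 \notin f(D)$; the latter is what gives $x_{-1}\notin D$, which is the ingredient for $x_{-1}\in H$. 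Since the identity excludes $x_0$ from every $f^n(D)$, the conclusion is unaffected, but the cited term is the wrong one.
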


\begin{proof}
The equivalence in (1) is obvious. The properties (1) and (2) follow from the definition of the homeo-part, see Definition~\ref{D:homeo-part}. For the property (3) see Theo\-rem~2.8 in~\cite{KST} and its proof. Lemma~\ref{L:homeo-part is G-delta} gives (4). The property (5) is obvious since if we add something to $H$, we have to add another full orbit (because we want (1)). This full orbit contains, due to the definition of the homeo-part, a point with two preimages. Then the enlarged set will not satisfy (2). Similarly, Lemma~\ref{L:noninvertibility f inverse} shows that if we add something to $H$ then the enlarged set will not satisfy (3) and so we get (6). \hfill 	$\square$
\end{proof}

\begin{lemma}\label{L:homeo-part in P}
Let $X$ be a compact metric space and $f: X\to X$ be minimal. Let  $H$ be the homeo-part of $f$ and $P$ be a residual set in $X$. Then there is a set $R\subseteq X$ such that
\begin{enumerate}
\item[(1)] $R\subseteq P\cap H$  and $R$ is residual in $X$,
\item[(2)] $f(R)= R = f^{-1}(R)$,
\item[(3)] both $f|_R$ and $(f|_R)^{-1}$ are minimal homeomorphisms $R\to R$.
\end{enumerate}
In particular, the inclusion $R\subseteq H$ and (2) give that $R$ is a union of some of the full (i.e. forward and backward) orbits of the homeomorphism $f|_H$.
\end{lemma}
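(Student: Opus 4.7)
\medskip

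\noindent\textbf{Proof plan.} The plan is to build $R$ as the set of points of $H$ whose entire two-sided orbit under the homeomorphism $f|_{H}$ lies in $P$, and then to read off all three required properties from this definition.

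\smallskip

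First, by Lemma~\ref{L:homeo-part properties}\,(4) the homeo-part $H$ is residual in $X$, so $P\cap H$ is residual. On $H$ the map $f|_H$ is a minimal homeomorphism (Lemma~\ref{L:homeo-part properties}\,(3)), so for every $n\in\mathbb Z$ the iterate $(f|_H)^n:H\to H$ is well defined. I would then set
\[
R \;=\; \bigcap_{n\in\mathbb Z} H_n, \qquad \text{where}\qquad H_n \;=\; \{x\in H:\ (f|_H)^n(x)\in P\}.
\]
By construction $R\subseteq H_0\subseteq P\cap H$, so the inclusion $R\subseteq P\cap H$ in (1) is automatic.

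\smallskip

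To get the residuality of $R$ I would show that each $H_n$ is residual in $X$ and then use that a countable intersection of residual sets is residual. For $n\ge 0$ one has $H_n=H\cap f^{-n}(P)$, and $f^{-n}(P)$ is residual by property (d) of Subsection~\ref{SS:minimality} applied $n$ times, so $H_n$ is residual. For $n<0$, writing $m=-n>0$ and using that $f|_H$ is a bijection,
\[
H_n \;=\; \{x\in H:\ (f|_H)^{-m}(x)\in P\} \;=\; f^m(P\cap H)\cap H;
\]
again property (d) (in the forward direction) gives that $f^m(P\cap H)$ is residual in $X$, hence so is $H_n$.

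\smallskip

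For (2) the $f|_H$-invariance of $R$ is immediate from the definition: $x\in R$ iff $(f|_H)^n(x)\in P$ for all $n\in\mathbb Z$, and this condition is preserved by shifting $n$ by $\pm 1$. So $f|_H(R)=R=(f|_H)^{-1}(R)$. To upgrade this to $f(R)=R=f^{-1}(R)$ inside $X$, I use Lemma~\ref{L:homeo-part properties}\,(1)\,(2): since $H=f^{-1}(H)$ and every point of $H$ has a unique $f$-preimage which lies in $H$, the set-theoretic preimage $f^{-1}(R)$ computed in $X$ coincides with $(f|_H)^{-1}(R)$, and likewise $f(R)=f|_H(R)$.

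\smallskip

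Finally, for (3), $f|_R$ is the restriction of the homeomorphism $f|_H$ to the invariant set $R$, hence itself a homeomorphism $R\to R$. For minimality: since $R$ is residual in the compact metric (hence Baire) space $X$, it is dense in $X$; therefore every nonempty relatively open subset of $R$ has the form $U\cap R$ with $U$ open in $X$ and $U\neq\emptyset$. The $f$-orbit (resp.\ $(f|_H)^{-1}$-orbit) of any $x\in R$ is dense in $X$ by minimality of $f$ (resp.\ of $(f|_H)^{-1}$, Lemma~\ref{L:homeo-part properties}\,(3)), and is contained in $R$ by (2), so it meets $U$ and therefore $U\cap R$. Hence $f|_R$ and $(f|_R)^{-1}$ are minimal.

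\smallskip

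The only place requiring real care is the residuality of $H_n$ for negative $n$, where one must pass a residual set through a (not necessarily injective) forward iterate of $f$; this is precisely where the minimality of $f$ on $X$ is used via property (d). Everything else is bookkeeping.
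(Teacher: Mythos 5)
Your proof is correct and constructs the same set $R$ as the paper (the paper writes $R = H \cap \bigcap_{n\in\mathbb Z} f^n(P)$, which for points of $H$ unwinds exactly to your condition that the full $f|_H$-orbit stays in $P$); the paper simply declares the verification of (2) and (3) ``obvious'' where you spell it out.
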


\begin{proof} Put $R = H \cap \bigcap_{n\in \mathbb Z} f^n(P)$.
The minimal map $f$ preserves residuality in both forward and backward direction. Therefore the set $R$, being the intersection of countably many residual sets, is residual in $X$. The rest is obvious. \hfill 	$\square$
\end{proof}

\subsection{Locally closed sets and a generalization of Baire category theorem}\label{SS: BCT}

A subset $S$ of a topological space $X$ is \emph{locally closed} if every $x\in S$ has a neighborhood $U$  such that the intersection $S\cap U$ is closed in the subspace $U$ of $X$. The following conditions are equivalent, see e.g. \cite[p. 112]{Eng}:
\begin{enumerate}
\item[(1)] The set $S$ is locally closed.
\item[(2)] The difference $\overline {S} \setminus S$ is closed (i.e. $S$ is open in $\overline{S}$). 
\item[(3)] $S$ is a difference of two closed sets (intersection of a closed set with an open set).
\end{enumerate}

\begin{lemma}\label{L:locally closed}
Let $X$ be a topological space and $S\subseteq X$ a locally closed set. If $S$ is not nowhere dense then $S$ has nonempty interior.
\end{lemma}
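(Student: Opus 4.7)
The plan is to unpack the hypothesis and then apply the equivalent characterization (3) of local closedness listed immediately before the lemma, namely that $S = C \cap U$ with $C$ closed and $U$ open in $X$. Without loss of generality I would take $C = \overline{S}$ (since $S \subseteq C$ forces $\overline{S} \subseteq C$, and then $S = \overline{S} \cap U$ as well).

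The hypothesis ``$S$ is not nowhere dense'' means, by definition, that $\overline{S}$ has nonempty interior in $X$; call this interior $V$. So $V$ is a nonempty open subset of $X$ with $V \subseteq \overline{S}$. The key observation is then that $V \cap U$ is nonempty: indeed $V$ is a nonempty open set meeting $\overline{S}$, and since $S$ is dense in $\overline{S}$, the set $V$ must meet $S$; but $S \subseteq U$, so any point of $V \cap S$ lies in $V \cap U$.

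Finally, $V \cap U$ is open in $X$ (intersection of two open sets) and
\[
V \cap U \;\subseteq\; \overline{S} \cap U \;=\; S,
\]
so $V \cap U$ is a nonempty open subset of $S$, which exhibits that $S$ has nonempty interior.

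There is no real obstacle here; the only thing to be careful about is not to confuse ``interior of $S$'' with ``interior of $\overline{S}$''. The lemma is essentially the assertion that for locally closed sets these two notions of having nonempty interior coincide, and the argument above is just the short two-line verification using that $S$ is open in its own closure.
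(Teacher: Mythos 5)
Your proof is correct and is essentially the same argument as the paper's: both take $V=\Inte_X(\overline{S})\neq\emptyset$, note that $S$ is open in $\overline{S}$ so $S=\overline{S}\cap U$ for some open $U$, observe $V\cap S\neq\emptyset$ by density of $S$ in $\overline{S}$, and conclude $V\cap U$ is a nonempty open subset of $S$. The only cosmetic difference is that you invoke the ``intersection of closed and open'' characterization and reduce to $C=\overline{S}$, whereas the paper invokes the ``open in its closure'' characterization directly; these produce the same $U$.
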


\begin{proof}
By the assumption (if $\Inte_X$ denotes the interior in $X$) we have $V:=\Inte_X(\overline{S})\neq \emptyset$. The set $V$ is open in $X$ and hence, being a subset of $\overline{S}$, obviously also open in $\overline{S}$. Now the fact that $S$ is dense in $\overline{S}$ gives that $S\cap V \neq \emptyset$. Further, since $S$ is locally closed, $S$ is open in $\overline{S}$. Therefore there is a set $U$ open in $X$ such that $S= U\cap \overline{S}$. Since $S\subseteq U$ and $S\cap V \neq \emptyset$, we have $U\cap V \neq \emptyset$. This set is open in $X$ and since $U\cap V \subseteq U\cap \overline{S} = S$ we get that $\Inte_X(S)\neq \emptyset$. \hfill 	$\square$
\end{proof}

Recall that a \emph{Baire space} is a topological space having the property that whenever a countable union of closed sets has nonempty interior then one of them has nonempty interior (i.e. so called Baire category theorem works). The following lemma gives a generalization of Baire category theorem: it shows that closed sets can be replaced by locally closed ones.

\begin{lemma}\label{L:Baire} Let $X$ be a Baire topological space and $\{S_\lambda:\ \lambda \in \Lambda\}$ a countable family of subsets of~$X$. Assume that
\begin{enumerate}
\item [(i)] $\bigcup_{\lambda \in \Lambda} S_\lambda$ has nonempty
interior in $X$ and
\item [(ii)] for every $\lambda \in \Lambda$, $S_\lambda$ is locally closed.
\end{enumerate}
Then there is $\lambda_0 \in \Lambda$ such that $S_{\lambda_0}$ has
nonempty interior in $X$.
\end{lemma}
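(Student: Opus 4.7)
The plan is to reduce the statement to the ordinary Baire category theorem by passing to the closures $\overline{S_\lambda}$, and then to recover nonempty interior of $S_{\lambda_0}$ itself from Lemma~\ref{L:locally closed}.

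First I would let $U$ denote the nonempty interior of $\bigcup_{\lambda\in\Lambda}S_\lambda$ guaranteed by assumption~(i). Since each $S_\lambda\subseteq\overline{S_\lambda}$, we obtain a countable cover
\[
U=\bigcup_{\lambda\in\Lambda}\bigl(U\cap\overline{S_\lambda}\bigr),
\]
in which every set $U\cap\overline{S_\lambda}$ is closed in the subspace $U$. Because $U$ is open in the Baire space $X$, it is itself a Baire space (open subspaces of Baire spaces are Baire). Applying the ordinary Baire category theorem inside $U$, there is $\lambda_0\in\Lambda$ for which $U\cap\overline{S_{\lambda_0}}$ has nonempty interior in $U$. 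Since $U$ is open in $X$, this interior is also open in $X$, so $\overline{S_{\lambda_0}}$ has nonempty interior in $X$; in other words, $S_{\lambda_0}$ is not nowhere dense in $X$.

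At this point the second ingredient kicks in: by hypothesis~(ii), $S_{\lambda_0}$ is locally closed. Lemma~\ref{L:locally closed} then tells us exactly that a locally closed set which fails to be nowhere dense must have nonempty interior. Applying it to $S_{\lambda_0}$ yields $\Inte_X(S_{\lambda_0})\neq\emptyset$, as required.

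The argument has no substantial obstacle; the only delicate point is remembering that the classical Baire category theorem gives nonempty interior of $\overline{S_{\lambda_0}}$ but not of $S_{\lambda_0}$, which is precisely the gap that Lemma~\ref{L:locally closed} is designed to bridge. All other steps (restricting to $U$, inheriting the Baire property, equating interiors in $U$ with interiors in $X$) are routine.
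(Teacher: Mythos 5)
Your proof is correct and follows essentially the same route as the paper's: apply the Baire category theorem to the closures $\overline{S_\lambda}$ to find one with nonempty interior, then invoke Lemma~\ref{L:locally closed} to upgrade from ``not nowhere dense'' to ``nonempty interior.'' The paper skips your intermediate step of restricting to the open set $U$; given the paper's stated definition of a Baire space (a countable union of closed sets with nonempty interior forces one of them to have nonempty interior), applying that definition directly to $\bigcup\overline{S_\lambda}\supseteq\bigcup S_\lambda$ already yields the conclusion without passing to the subspace $U$, so your extra care is harmless but not needed.
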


\begin{proof} By applying Baire category theorem to the closed sets
$\overline{S_\lambda}$, $\lambda \in \Lambda$, we get that there is
$\lambda_0 \in \Lambda$ such that $\overline{S_{\lambda_0}}$ has
nonempty interior. So, $S_{\lambda_0}$ is not nowhere dense and since it is locally closed, it has nonempty interior in $X$ by Lemma~\ref{L:locally closed}.  \hfill 	$\square$
\end{proof}

%%%%%%%%%%%%%%%%%%%%%%%%%%%%%%%%%%%%%%%%%%%%%%%%%%%%%

\section{Strongly star-like interior points}\label{S:slip}

%%%%%%%%%%%%%%%%%%%%%%%%%%%%%%%%%%%%%%%%%%%%%%%%%%%%%

We introduce the notion of a \emph{strongly} star-like interior point which
is more restrictive than that of a star-like interior point of $M$ and,
though not appearing in the statement of Theorem~A, will play a key role
in the proof of it.

First of all recall that, when speaking on a graph bundle, we always assume
that it is a (compact) \emph{metric space}, as it was already said in Introduction.
To avoid cumbersome formulations, we will often make no distinction
between homeomorphic spaces. If $(E, B, p, \Gamma)$ is a graph
bundle  and $Q\subseteq E$ and $Z\subseteq \Gamma$, then we say that $Q$
is \emph{canonically homeomorphic} to $U\times Z$, if $p(Q)=U$ and there is a homeomorphism $h:
Q\to U\times Z$ such that on $Q$ we have $\pr_1 \circ h = p$ (here $h$ is said to be a
\emph{canonical homeomorphism}). Notice that, in this terminology, in the above definition of the fibre
bundle it is required that $p^{-1}(U)$ be canonically homeomorphic to $U\times \Gamma$.

Recall that if $(E, B, p, \Gamma)$ is a graph bundle and $M\subseteq E$ and $b\in B$, then
the the fibre of $M$ over $b$ is $M_b = M \cap p^{-1}(b)$. Further, by $\Gamma_b$ we will denote
the set $p^{-1}(b)$, the fibre over $b$ (now we slightly abuse the already adopted notation $M_b$,
since $\Gamma$ is not a subset of $E$). Note that $\Gamma_b = E_b \subseteq E$ is a graph
homeomorphic to $\Gamma$ and if $E=B\times \Gamma$ then $\Gamma_b = \{b\} \times \Gamma$.
Also subsets of $\Gamma_b$ will be sometimes denoted by, say, $I_b$, $T_b$, etc. We believe that this
will not cause any misunderstanding because always when using notation like $X_b$ it will be clear
what kind of a set it is. Recall also that if $M\subseteq E$ and $U\subseteq B$, we denote $M_U = M \cap p^{-1}(U)$.

By an arc we mean a homeomorphic image of a compact real interval.
Sometimes we call it a closed arc, since in an obvious way we also
use the notions of an open or a half-closed arc.  For
$N\geq n \geq 2$ let $\Sigma_n \subseteq \Sigma_N$ be two open stars
with the same central point. Suppose that $\Sigma_n$ is the union of
some of the half-closed branches of $\Sigma_N$ (i.e., $\Sigma_n$ is
obtained from $\Sigma_N$ by removing $N-n\geq 0$ open branches of
$\Sigma_N$). Then we will say that $\Sigma_n$ is a \emph{full
sub-star of $\Sigma_N$}. Here `full' does not mean that $n=N$; it
just refers to the fact that $\Sigma_n$ consists of `whole' branches
of $\Sigma_N$ (rather than of just subsets of them) and so it can be
$n <N$. Note also that we consider only the case when $N\geq n \geq
2$ (though, formally, such a definition would make sense for
$N\geq n \geq 1$).

\begin{definition}\label{D:strongly star-like}
Suppose that $M$ is a closed subset of a \emph{product} graph bundle $E=B\times
\Gamma$. Then we define $\Lint_s (M)$, the set of strongly star-like interior points of $M$, as follows.
A point $x=(x_1,x_2) \in M$ is said to be a \emph{strongly star-like interior point of~$M$}, if
\begin{itemize}
\item[$\bullet$] $x$ has order $N\geq 2$ in the graph $\Gamma_{x_1} = \{x_1\}\times \Gamma$ (so, $\ord (x_2, \Gamma) = N \geq 2$), and
\item[$\bullet$]  there exists an $E$-open neighborhood $O\times \Sigma_N$ of $x$ such that $x_2$ is the central
point of $\Sigma_N$ and the corresponding $M$-open neighborhood $\mathcal{G}= M \cap {(O\times \Sigma_N)}$
of $x$ has the following structure:
\smallskip
   \begin{quotation}
      \noindent  $\mathcal{G}_{x_1} = \{x_1\} \times \Sigma_{k}$ where $k\geq 2$ and $\Sigma_k$ is a full sub-star
       of $\Sigma_N$, and for every $z \in p(\mathcal G)\subseteq O$ we have
       $\mathcal{G}_z = \{z\} \times \Sigma_{k(z),z} \subseteq \{z\} \times \Sigma_k$, where
       $k(z) \in \{2, \dots, k\}$ and $\Sigma_{k(z),z}$ is a full sub-star of $\Sigma_k$.
       (Notice that $\Sigma_{k(x_1),x_1} = \Sigma_k$.) We will say that $\mathcal{G}$ is a
       \emph{canonical $\Lint_s(M)$-neighborhood of~$x$} (note that, among others, $\mathcal{G} \subseteq  \Lint_s (M)$).
   \end{quotation}
\smallskip
\end{itemize}

Above, $\Lint_s (M)$ was defined for a closed subset $M$ of $E=B\times
\Gamma$. Since each graph bundle is locally trivial and the
above definition has a local character, the concept of a strongly star-like interior point  has an obvious extension to the case when the graph bundle $E$
is not a direct product space. For a closed set $M$ in an arbitrary graph
bundle we set $\End_s(M)= M\setminus \Lint_s (M)$.
\end{definition}
\begin{example}\label{Ex:examplepredA}
Let $E=B\times \Gamma$ where $B=[ 0,1]$ and $\Gamma = ([ -1,1]
\times \{0\}) \cup (\{0\}\times [ 0, 1])$. Put $A=[ 0,1] \times [
-1,1] \times \{0\}$ and
\begin{align}
M^1 & = A \cup \{(x,0,x):\, x\in [ 0,1]\}, \quad 
M^2  = M^1 \cup \{(0,0,z):\, z\in [ 0,1]\},\notag \\
M^3 & = A \cup \{(0,0,z):\, z\in [ 0,1]\}, \quad
M^4  = A \cup \{(x,0,1-x):\, x\in [ 0,1]\}~. \notag
\end{align}
Then $(0,0,0) \notin \Lint_s(M^i)$ for $i=1,2$ and $(0,0,0) \in \Lint_s(M^i)$ for $i=3,4$. \hfill $\square$
\end{example}

In the definition we write $\Sigma_{k(z),z}$ rather than $\Sigma_{k(z)}$ because it may happen
that $\Sigma_{k(z_1),z_1}$ and $\Sigma_{k(z_2),z_2}$, considered as subgraphs of $\Gamma$,
are different even when $k(z_1)= k(z_2)$. The following instructive example illustrates this fact.

\begin{example}\label{Ex:exampleA}
Let $E=B\times S_4$ where $B=[ 0, 1 ]$. Let $(C_n)_{n=1}^\infty$ be
a sequence of pairwise disjoint Cantor sets in $(0, 1]$ converging,
in the Hausdorff metric, to the singleton $\{0\}$. Denote three of
the four closed branches of $S_4$ by $J_1,J_2,J_3$ and the central
point of $S_4$ by $c$. Let $M$ be the set with
$$
M_x = \begin{cases} \{x\}\times (J_1 \cup J_2 \cup J_3) & \text {if $x=0$}\\
                    \{x\} \times (J_1\cup J_2) & \text {if $x\in C_n$ for $n \equiv 1 \mod 3$}, \\
                    \{x\} \times (J_2\cup J_3) & \text {if $x\in C_n$ for $n \equiv 2 \mod 3$}, \\
                    \{x\} \times (J_3\cup J_1) & \text {if $x\in C_n$ for $n \equiv 0 \mod 3$}, \\
                    \emptyset & \text {otherwise}~,
      \end{cases}
$$
see Fig.2. Then $M$ is compact and $\{0\} \times \{c\} \in \Lint_s(M)$. In fact all the points
of $M$ except of the end-points of the stars $M_x$, $x\in p(M)$, belong to $\Lint_s(M)$.
\hfill $\square$

 \begin{figure}[ht]
  \label{Fig:2}
  \begin{center}
   \includegraphics[width=8.0cm]{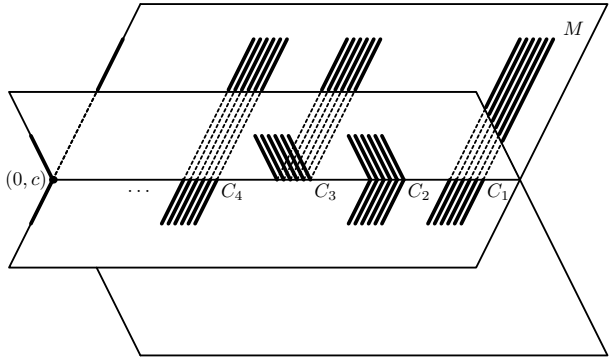}
   \caption{$(0; c)$ is a strongly star-like interior point of M.}
  \end{center}
 \end{figure}
\end{example}

Notice  that $\Lint_s (M)$ is open in $M$ (but not necessarily in $E$)
and $\End_s(M)$ is closed in $M$ (hence closed in $E$). By comparing Definitions~\ref{D:star-like in bundle} and \ref{D:strongly star-like} observe that
\begin{equation}\label{E:strong}
{\Lint}_s (M) \subseteq \Lint (M) = \bigcup _{b\in B} \Lint (M_b), \quad {\End} _s (M) \supseteq \End (M) = \bigcup_{b\in B}
\End (M_b).
\end{equation}
In general neither of these two inclusions is an equality. For
$M\subseteq E$ and $b\in B$ we will further use the notation
$$
M_b^{S_s} = M_b \cap {\Lint}_s (M) = ({\Lint}_s (M))_b~.
$$

\begin{example}\label{Ex:exampleB}
Let $E=B \times \Gamma$ with $B=[-1,1]$ and $\Gamma= [0,3]$. Let $C$
be a Cantor set with $\min C =0$, $\max C =1$ and let $M= ([-1,0]
\times [ 0,1]) \cup (C\times [ 1,2 ]) \cup (\{0\}\times [ 2,3])$.
Then $\Lint_s(M) = ([ -1,0 ] \times (0,1)) \cup (C\times (1,2)) \cup
(\{0\}\times (2,3))$. So, $M_0^{S_s} = \{0\}\times ((0,1)\cup (1,2)
\cup (2,3))$ while $\Lint (M_0) = \{0\}\times (0,3)$. \hfill $\square$
\end{example}

\begin{lemma}\label{L:Ends}
Let $(E, B, p, \Gamma)$ be a compact graph bundle and $M\subseteq
E$ a compact set. Then $${\End} _s M = \overline{\End M}.$$
\end{lemma}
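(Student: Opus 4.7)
The plan is to establish the two inclusions $\overline{\End M}\subseteq\End_s M$ and $\End_s M\subseteq\overline{\End M}$ separately. The first is immediate from the inclusion $\End M\subseteq\End_s M$ recorded in~(\ref{E:strong}), together with the fact that $\End_s M=M\setminus\Lint_s M$ is closed in $M$ (hence in $E$), because $\Lint_s M$ is open in $M$ directly by definition.

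For the reverse inclusion I would argue contrapositively: assume $x\in M$ satisfies $x\notin\overline{\End M}$, and show $x\in\Lint_s M$. First, take an $E$-open neighborhood $W$ of $x$ disjoint from $\End M$, so every point of $W\cap M$ is a star-like interior point of its own fibre. Passing to a local trivialization near $x_1=p(x)$, I may write $x=(x_1,x_2)$; since $x_2\in\Lint(M_{x_1})$, necessarily $\ord(x_2,\Gamma)=N\geq 2$. I would then fix an open star neighborhood $\Sigma_N\subseteq\Gamma$ of $x_2$ of order $N$ and an open set $O\ni x_1$ such that $O\times\Sigma_N\subseteq W$ lies inside the trivialization; after a further shrinking of $\Sigma_N$, I can also arrange $M_{x_1}\cap\Sigma_N=\Sigma_k$ for some full sub-star $\Sigma_k$ of $\Sigma_N$ with $k\geq 2$ (the $k$ branches of $\Sigma_k$ being those $k$ of the $N$ branches of $\Sigma_N$ along which $M_{x_1}$ locally extends from $x_2$).

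The heart of the argument is the structural claim that for every $z\in O$, the slice $M_z\cap\Sigma_N$ is either empty or a full sub-star of $\Sigma_N$ centered at $x_2$. Every $y\in M_z\cap\Sigma_N$ has order at least $2$ in $M_z$; but a point $y$ in the open interior of a branch $B_i$ of $\Sigma_N$ has order $2$ in $\Gamma$, hence order exactly $2$ in $M_z$, which makes $M_z\cap B_i$ open in $B_i$. It is also closed in $B_i$, so by connectedness of $B_i$ one has $M_z\cap B_i\in\{\emptyset,B_i\}$. Closedness of $M_z$ in $\Gamma$ then forces $x_2\in M_z$ whenever some $B_i\subseteq M_z$, because $B_i$ accumulates at $x_2$; this yields the claimed dichotomy.

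To finish, I would shrink $O$ further to kill the $N-k$ ``extra'' branches $B_i\subseteq\Sigma_N\setminus\Sigma_k$: for each such branch fix an interior point $y_i\in B_i$; if every neighborhood of $x_1$ contained some $z$ with $B_i\subseteq M_z$, a sequence $z_n\to x_1$ with $(z_n,y_i)\in M$ would give $(x_1,y_i)\in M$ by closedness, contradicting $y_i\notin\Sigma_k=M_{x_1}\cap\Sigma_N$. The finite intersection of the resulting neighborhoods of $x_1$ produces an $O$ on which $M_z\cap\Sigma_N\subseteq\Sigma_k$ for every $z\in O$, and is therefore either empty or a full sub-star of $\Sigma_k$. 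This exhibits $\mathcal{G}=M\cap(O\times\Sigma_N)$ as a canonical $\Lint_s(M)$-neighborhood of $x$, so $x\in\Lint_s M$, as required. The main obstacle I anticipate is precisely the structural dichotomy of the third paragraph, since it simultaneously exploits the local hypothesis that all nearby points of $M$ are star-like interior points of their fibres and the global closedness of $M$ in $E$.
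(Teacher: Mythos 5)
Your proof is correct and follows the same two-inclusion strategy as the paper: the trivial inclusion uses~(\ref{E:strong}) together with closedness of $\End_s M$, and the nontrivial one shows that a point of $M$ off $\overline{\End M}$ has a canonical $\Lint_s(M)$-neighborhood, by establishing that nearby fibres of $M$ inside a small product neighborhood are full sub-stars. Your write-up is in fact a more careful expansion of the paper's terse one-paragraph argument: where the paper merely asserts that each $(\{z\}\times \Sigma_m)\cap M$ is empty or a full sub-star of $\Sigma_m$ ``otherwise it would contain a point from $\End(M_z)$'', you supply the open-and-closed connectedness argument on each branch that underlies this claim, and you also carry out the final shrinking of $O$ needed so that the nearby sub-stars sit inside $\Sigma_k$ (not merely inside $\Sigma_N$), as Definition~\ref{D:strongly star-like} actually requires.
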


\begin{proof} Without loss of generality we may assume that
$E = B \times \Gamma$. One inclusion is trivial by~(\ref{E:strong}).
To prove the other one, suppose that there is a point $x\in \End_s(M)
\setminus \overline{\End M}$. Then, if the second coordinate of
$x$ has order $m$ in $\Gamma$, we have $m\geq 2$ (otherwise $x$ would be
in $\End (M)$) and some $E$-open neighborhood $O\times \Sigma _m$ of $x$ is
disjoint with $\End (M)$. Hence, if $z\in O$ then the set $(\{z\}\times \Sigma _m)\cap M$
is empty or is of the form $\{z\} \times \Sigma_{k(z),z}$ where $k(z) \in \{2,\dots, m\}$
and $\Sigma_{k(z),z}$ is a full sub-star of $\Sigma_m$ (otherwise it would necessarily
contain a point from $\End (M_z)$). It follows that $x\in \Lint_s(M)$, a contradiction.  \hfill $\square$
\end{proof}

\begin{lemma}\label{L:End}
Let $(E, B, p, \Gamma)$ be a compact graph bundle and $M\subseteq E$ a compact set.
If $ {\End}_s (M) = M$ then $M$ is nowhere dense in $E$.
\end{lemma}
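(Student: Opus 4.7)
The plan is to argue by contradiction. Suppose $M$ has nonempty interior in $E$, so there is a nonempty open $W \subseteq E$ with $W \subseteq M$. I will produce a point of $W$ that is a strongly star-like interior point of $M$, contradicting the hypothesis $\End_s(M) = M$.

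First I would exploit the local triviality of the bundle: by shrinking $W$ if necessary, I may arrange that $W$ is canonically homeomorphic to $O \times V$ for some open $O \subseteq B$ and some open $V \subseteq \Gamma$. Since $\Gamma$ is a graph and $V$ is a nonempty open subset, $V$ contains a point $x_2$ of order $N \geq 2$ in $\Gamma$; indeed, any point lying in the relative interior of an arc contained in $V$ has order $2$ in that arc (and hence order at least $2$ in $\Gamma$), and such an arc exists by the very definition of a graph. Pick any $x_1 \in O$ and set $x = (x_1, x_2)$.

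The key step is then to verify that $x \in \Lint_s(M)$ directly from the definition. Choose an open $N$-star $\Sigma_N \subseteq V$ with $x_2$ as its central point; such arbitrarily small neighborhoods exist because $\ord(x_2,\Gamma) = N$. Then $O \times \Sigma_N \subseteq O \times V \subseteq M$, so the $M$-open neighborhood $\mathcal{G} = M \cap (O \times \Sigma_N)$ coincides with $O \times \Sigma_N$ itself. For every $z \in O$ we therefore have $\mathcal{G}_z = \{z\} \times \Sigma_N$, which is trivially a full sub-star of $\Sigma_N$ with $k = k(z) = N$. This matches the defining structure of a strongly star-like interior point, so $x \in \Lint_s(M)$, contradicting $\End_s(M) = M$.

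I do not anticipate any serious obstacle here: the proof amounts to a direct unpacking of the definition of $\Lint_s$ against local triviality. The only point that requires a brief word of care is the existence of a point of order at least $2$ in any nonempty open subset $V$ of $\Gamma$, which follows immediately from the finite-arc description of a graph.
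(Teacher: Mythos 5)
Your proposal is correct and takes essentially the same approach as the paper: the paper's proof is a one-liner, "If $M$ is somewhere dense in $E$ then, being closed, has nonempty interior in $E$. It is clear that this interior contains a point which belongs to $\Lint_s(M)$," and your argument simply spells out the verification that the interior contains a strongly star-like interior point, by passing to a trivializing neighborhood, selecting a point of order $N\geq 2$, and checking that the full product neighborhood $O\times\Sigma_N$ inside $M$ satisfies the defining condition with $k(z)=k=N$ for every $z$.
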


\begin{proof}
If $M$ is somewhere dense in $E$ then, being closed, has nonempty
interior in $E$. It is clear that this interior contains a point
which belongs to ${\Lint}_s (M)$.  \hfill $\square$
\end{proof}

\begin{lemma}\label{L:End2}
Let $(E, B, p, \Gamma)$ be a compact graph bundle and $M\subseteq E$
a compact set with $p(M)=B$. If $\,{\End} (M) = \emptyset$ then $M$
has nonempty interior in $E$.
\end{lemma}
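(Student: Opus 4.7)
The strategy is to produce, inside a trivialising chart, a non-empty open set $V\subseteq B$ and a half-closed branch $\beta$ of some open star in $\Gamma$ such that $V\times\beta\subseteq M$; since $\beta$ with its central point removed is then open in $\Gamma$, this exhibits a non-empty $E$-open subset of $M$. The starting point is Lemma~\ref{L:Ends}: the hypothesis $\End(M)=\emptyset$ gives $\End_s(M)=\overline{\End(M)}=\emptyset$, so $M=\Lint_s(M)$ and every point of $M$ admits a canonical $\Lint_s(M)$-neighborhood in the sense of Definition~\ref{D:strongly star-like}.

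Around each $x=(x_1,x_2)\in M$ I would pick such a canonical neighborhood $\mathcal{G}^{x}=M\cap(O^x\times\Sigma_{N^x})$ inside a trivialising chart, shrinking $\Sigma_{N^x}$ if necessary so that each of its branches meets the ramification points of $\Gamma$ only at the centre $x_2$; then every branch of $\Sigma_{N^x}$, and a fortiori every branch of the full sub-star $\Sigma_{m^{x}}\subseteq\Sigma_{N^{x}}$ appearing in $\mathcal{G}^{x}_{x_1}$, is open in $\Gamma$ once its centre is removed. Compactness of $M$ and $M$-openness of the sets $\mathcal{G}^{x}$ then yield a finite subcover $\mathcal{G}^{x_1},\dots,\mathcal{G}^{x_n}$ of $M$.

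The combinatorial step is to decompose $B$ into finitely many locally closed sets. Writing $\beta^i_1,\dots,\beta^i_{m_i}$ for the branches of the full sub-star $\Sigma_{m_i}$ associated with $\mathcal{G}^{x_i}$, I would set
\[
 P^i_j := \{\, z\in O^i : \{z\}\times \beta^i_j \subseteq M\,\} = O^i\cap \bigcap_{y\in\beta^i_j}\{z\in B:(z,y)\in M\}.
\]
As the intersection of an open set with a closed set, each $P^i_j$ is locally closed in $B$. The requirement in Definition~\ref{D:strongly star-like} that $\mathcal{G}^{x_i}_z$ be a full sub-star of $\Sigma_{m_i}$ of size at least $2$ whenever $z\in p(\mathcal{G}^{x_i})$ forces $\mathcal{G}^{x_i}_z$ to contain some entire branch $\beta^i_j$, so $p(\mathcal{G}^{x_i})=\bigcup_j P^i_j$; together with $p(M)=B$ this gives the decomposition $B=\bigcup_{i,j}P^i_j$.

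Since $B$ is a compact metric, hence Baire, space, Lemma~\ref{L:Baire} then supplies indices $i_0,j_0$ and a non-empty open set $V\subseteq P^{i_0}_{j_0}\subseteq O^{i_0}$. By the definition of $P^{i_0}_{j_0}$, $V\times\beta^{i_0}_{j_0}\subseteq M$ in the trivialising chart, and as $\beta^{i_0}_{j_0}\setminus\{x^{(i_0)}_{2}\}$ is open in $\Gamma$, the set $V\times(\beta^{i_0}_{j_0}\setminus\{x^{(i_0)}_{2}\})$ is a non-empty $E$-open subset of $M$, proving $\Inte_E(M)\neq\emptyset$. The main subtle point, I expect, is the initial shrinking of the stars $\Sigma_{N^x}$ so that their half-closed branches inherit $\Gamma$-openness away from their centres; this relies on a graph having only finitely many ramification points, and once arranged the remainder is a clean interplay of the strong star-like structure, local triviality of the bundle, and the generalised Baire lemma.
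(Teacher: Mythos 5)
Your proof is correct, but it takes a genuinely different and heavier route than the paper's. The paper's argument is short and elementary: since $\End(M)=\emptyset$, every fibre $M_b$ is a nonempty graph without end-points and therefore contains one of the finitely many circles $K_1,\dots,K_k$ of $\Gamma$ (within a trivialising chart); the sets $B^{(i)}=\{b:\{b\}\times K_i\subseteq M_b\}$ are closed by compactness of $M$ and cover $B$, so ordinary Baire yields some $B^{(j)}$ with nonempty interior, and a ball in $B$ times an open sub-arc of $K_j$ gives a nonempty open subset of $M$. You instead invoke Lemma~\ref{L:Ends} to upgrade $\End(M)=\emptyset$ to $M=\Lint_s(M)$, cover $M$ by finitely many canonical $\Lint_s(M)$-neighborhoods, decompose $B$ according to which half-closed branches of the associated full sub-stars are carried, and apply the generalised Baire Lemma~\ref{L:Baire} for locally closed sets. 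This is valid (the sets $P^i_j$ are indeed locally closed, the cover $B=\bigcup_{i,j}P^i_j$ follows from $p(M)=B$ together with the full-sub-star property of canonical neighborhoods, and the open branch minus its centre is $\Gamma$-open after the shrinking you describe), and it nicely exercises the strong star-like machinery of Section~\ref{S:slip}. However it leans on the more delicate Lemma~\ref{L:Baire} where ordinary Baire for closed sets would have sufficed; for instance one could first cover $B$ by finitely many closed balls inside trivialising charts and decompose over each, making the pieces closed. The paper's circle-based decomposition is the more economical path, avoiding both the strong star-like formalism and the generalised Baire lemma.
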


\begin{proof}
We may assume that $E=B\times \Gamma$. Let $K_1, K_2, \dots,
K_k$ be the  list of circles in $\Gamma$. For $i=1,2,\dots,
k$, let $B^{(i)}$ be the set of points $b\in B$ such that $M_b$
contains $\{b\}\times K_i$. The set $M$ is closed and so all the
sets $B^{(i)}$ are closed. Since $p(M)=B$ and ${\End} (M) =
\emptyset$, we have $B=\bigcup _{i=1}^k B^{(i)}$ and since the metric
space $B$ is compact (hence second category), there is $j\in
\{1,2,\dots,k\}$ such that the (closed) set $B^{(j)}$ has nonempty
interior. Since $\Gamma$ is a graph, it follows that $M$ has
nonempty interior in $E$.  \hfill $\square$
\end{proof}

Trivial examples show that the converse statements to the previous two lemmas are not true.

\begin{lemma}\label{L:Lints}
Let $E = B\times \Gamma$ be a compact graph bundle, $M\subseteq E$ a
compact set and $a\in B$. Suppose that $\Delta = \{a\}\times
\Delta^{\Gamma}$ is a compact subset of $M_a^{S_s}$. If $W$ is a
sufficiently small open neighborhood of $a$ and $U$ is a
sufficiently small open neighborhood of $\Delta^\Gamma$ then the
$E$-open neighborhood $W\times U$ of $\Delta$ has the following
properties:
\begin{itemize}
\item[$\bullet$] The corresponding $M$-open neighborhood $\mathcal{D}= M \cap {(W\times U)}$ of $\Delta$
is a subset of $\Lint_s (M)$.
\item[$\bullet$] If we write $\mathcal{D}_z = \{z\} \times \mathcal{D}_z^{\Gamma}$,
then $\mathcal{D}_z^{\Gamma} \subseteq \mathcal{D}_a^{\Gamma}$ and
$\overline{\mathcal{D}_z^{\Gamma}} \setminus \mathcal{D}_z^{\Gamma}
\subseteq \overline{\mathcal{D}_a^{\Gamma}} \setminus
\mathcal{D}_a^{\Gamma}$ whenever $z\in p(\mathcal{D})$.
\item[$\bullet$] The set $p(\mathcal D)$ is closed in $W$, hence it is a Baire space.
\end{itemize}
\end{lemma}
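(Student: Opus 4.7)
\emph{Proof plan.} The plan is to cover $\Delta$ by finitely many canonical $\Lint_s(M)$-neighborhoods and then take a diagonal intersection in the base and a union in the fibre. Since $\Delta \subseteq M_a \cap \Lint_s(M)$ is compact, each $x \in \Delta$ admits a canonical $\Lint_s(M)$-neighborhood $M \cap (O_x \times \Sigma^x)$ of the kind prescribed by Definition~\ref{D:strongly star-like}. I will fix a finite subcover $\{\mathcal{G}_i = M \cap (O_i \times \Sigma^i)\}_{i=1}^{\ell}$, where each $\Sigma^i$ is an open $N_i$-star centered at some $c_i \in \Delta^\Gamma$, and set
\[
W = \bigcap_{i=1}^{\ell} O_i, \qquad U = \bigcup_{i=1}^{\ell} \Sigma^i.
\]
Then $\mathcal{D} = M \cap (W \times U) \subseteq \bigcup_i \mathcal{G}_i \subseteq \Lint_s(M)$, which is the first bullet.

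The inclusion $\mathcal{D}_z^\Gamma \subseteq \mathcal{D}_a^\Gamma$ in the second bullet is read off the canonical structure: any $(z,v) \in \mathcal{D}$ lies in some $\mathcal{G}_j$, and by Definition~\ref{D:strongly star-like} $v$ belongs to the full sub-star $\Sigma_{k(z,j),z}$ of the full sub-star $\Sigma_{k(j)} = \{u : (a,u) \in \mathcal{G}_j\}$ of $\Sigma^j$; hence $(a,v) \in \mathcal{G}_j \subseteq M$, and since $v \in \Sigma^j \subseteq U$ and $a \in W$, also $(a,v) \in \mathcal{D}$. The boundary inclusion follows for free from the closedness of $M$: if $w \in \overline{\mathcal{D}_z^\Gamma} \setminus \mathcal{D}_z^\Gamma$ is the limit of some $v_n \in \mathcal{D}_z^\Gamma \subseteq \mathcal{D}_a^\Gamma$, then $w \in \overline{\mathcal{D}_a^\Gamma}$; were $w$ in $U$, closedness of $M$ would give $(z,w) \in M \cap (W \times U) = \mathcal{D}$, contradicting $w \notin \mathcal{D}_z^\Gamma$, so $w \notin U$ and, a fortiori, $w \notin \mathcal{D}_a^\Gamma \subseteq U$.

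I expect the real content to lie in the third bullet. I will take $z_n \in p(\mathcal{D})$ with $z_n \to z \in W$ and choose $(z_n, v_n) \in \mathcal{G}_{i(n)}$; since $i(n) \in \{1,\dots,\ell\}$, pigeonhole lets me pass to a subsequence with $i(n) = j$ fixed, so $z_n \in p(\mathcal{G}_j)$ for all $n$. The crucial observation is that a full sub-star always contains its centre, so $c_j$ belongs to every $\Sigma_{k(z_n,j),z_n}$ and hence $(z_n, c_j) \in \mathcal{G}_j \subseteq M$ for every $n$. Passing to the limit, closedness of $M$ gives $(z, c_j) \in M$, and since $z \in W$ and $c_j \in \Sigma^j \subseteq U$, in fact $(z, c_j) \in \mathcal{D}$, whence $z \in p(\mathcal{D})$. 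Thus $p(\mathcal{D})$ is closed in $W$, and the Baire conclusion follows because $W$ is an open, hence Baire, subspace of the compact metric space $B$, and a closed subspace of a Baire space is Baire.
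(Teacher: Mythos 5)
Your proposal is essentially correct and follows the same overall scheme as the paper: cover $\Delta$ by finitely many canonical $\Lint_s(M)$-neighborhoods, intersect the bases and unite the stars, and then read off the three bullets. The first two bullets are handled exactly as in the paper. For the third bullet you use a genuinely different (and somewhat cleaner) argument: after pigeonholing the sequence into a single $\mathcal{G}_j$, you observe that the central point $c_j$ of $\Sigma^j$ belongs to \emph{every} full sub-star of $\Sigma^j$, hence $(z_n, c_j) \in \mathcal{G}_j \subseteq M$ for all $n$, and closedness of $M$ gives $(z, c_j) \in M \cap (W\times U) = \mathcal{D}$. The paper instead proves that the family $\{\mathcal{D}_z^\Gamma : z\in p(\mathcal{D})\}$ is finite, passes to a subsequence with all fibres equal, and then invokes closedness. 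Your route avoids the finiteness step and exposes the key structural fact more directly; both are valid, and yours is arguably the more economical.

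One small imprecision in the final sentence: the claim that a closed subspace of a Baire space is always Baire is false in general (Baire category is not hereditary to closed subsets for arbitrary topological spaces). What saves you here is the particular situation: $p(\mathcal D)$ is closed in the open subset $W$ of the compact metric space $B$, hence it is a $G_\delta$ subset of $B$ and therefore completely metrizable (equivalently, it is a closed subset of the locally compact Hausdorff space $W$, hence itself locally compact Hausdorff), and either of these properties gives Baire. This is exactly the route the paper takes, and you should state it rather than the false general principle.
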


\begin{proof} Since $\Delta$ is compact, it can be covered by a finite family
of $M$-open sets $\mathcal{G}^j= M \cap {(O_j\times
\Sigma_{N(j)})}$, $j=1,\dots,r$, where $\mathcal{G}^j$ are some
canonical $\Lint_s(M)$-neighborhoods of points in $\Delta$. Put $W =
\bigcap_{j=1}^r O_j$ and $U= \bigcup_{j=1}^r \Sigma_{N(j)}$. We
prove that $\mathcal{D}= M \cap {(W\times U)}$ satisfies all the
requirements. First, it is obvious that $\mathcal D$ is an $M$-open
neighborhood of $\Delta$ and $\mathcal D \subseteq \Lint_s(M)$.
Further notice that if we denote, for $j=1,\dots,r$,
$$
M^j = M \cap (W\times \Sigma_{N(j)})
$$
then $a\in p(M^j)$, $(M^j)_a = \{a\}\times \Sigma^j$ where
$\Sigma^j$ is a full sub-star of $\Sigma_{N(j)}$ and, for $z\in
p(M^j)$, $(M^j)_z = \{z\}\times \Sigma_{j(z),z}$ where
$\Sigma_{j(z),z}$ is a full sub-star of $\Sigma^j =
\Sigma_{j(a),a}$. Thus
$$
\mathcal{D}= M \cap {(W\times U)} = \bigcup_{j=1}^r M^j =
\bigcup_{j=1}^r \bigcup_{z\in p(M^j)}(\{z\}\times \Sigma_{j(z),z})~.
$$
Fix $z\in p(\mathcal D) = \bigcup_{j=1}^r p(M^j)$. Since
$$
\mathcal D_z^\Gamma = \bigcup_{j=1}^r \Sigma_{j(z),z}, \quad
\text{in particular} \quad \mathcal D_a^\Gamma = \bigcup_{j=1}^r
\Sigma^j,
$$
we get $\mathcal D_z^\Gamma \subseteq \mathcal D_a^\Gamma$. Hence
$\overline{\mathcal D_z^\Gamma} \subseteq \overline{\mathcal
D_a^\Gamma}$ and so, to prove that
$\overline{\mathcal{D}_z^{\Gamma}} \setminus \mathcal{D}_z^{\Gamma}
\subseteq \overline{\mathcal{D}_a^{\Gamma}} \setminus
\mathcal{D}_a^{\Gamma}$, it is sufficient to show that the
assumption that some point $q \in \overline{\mathcal{D}_z^{\Gamma}}
\setminus \mathcal{D}_z^{\Gamma}$ belongs to $\mathcal D_a^\Gamma$,
leads to a contradiction. To this end consider such a point $q$.
Since $q \in \mathcal D_a^\Gamma$, there is $j \in \{1,\dots,r\}$
such that $q\in \Sigma^j$ and so $q\in U$. On the other hand, $q\in
\overline{\mathcal D_z^\Gamma}$ and so $(z,q)\in \overline{\mathcal
D_z} \subseteq \overline{M} = M$. Also, $(z,q)\in W\times U$ because
$z\in p(\mathcal D) \subseteq W$ and $q\in U$. Thus, $(z,q)\in
M\cap(W\times U) = \mathcal D$ which implies that $q\in \mathcal
D_z^\Gamma$, a~contradiction.

Now we prove that $p(\mathcal D)$ is closed in $W$. It can
 be seen from the definition that if $\mathcal{G}$ is a
canonical $\Lint_s(M)$-neighborhood of a point $x\in M \subseteq
B\times \Gamma$ and for each $z\in p(\mathcal G)$ we put $\mathcal G
_z = \{z\}\times \mathcal G_z^\Gamma$, then the family $\{\mathcal
G_z^\Gamma :\, z\in p(\mathcal G)\}$ is finite. Since $\mathcal D$
was defined using only finitely many such canonical
$\Lint_s(M)$-neighborhoods, we get that also the family $\{\mathcal
D_z^\Gamma :\, z\in p(\mathcal D)\}$ is finite. Therefore, if
$p(\mathcal D) \ni z_n \to z \in W$, we may (passing to a
subsequence if necessary) assume that all sets $\mathcal
D_{z_n}^\Gamma$ are the same. But then, since $M$ is closed,
obviously $\mathcal D_{z}$ is nonempty and so $z\in p(\mathcal D)$.

So, the set $p(\mathcal D)$ is closed (hence is of type
$G_{\delta}$) in the metric space $W$. Since $W$ is open in $B$,
this implies that $p(\mathcal D)$ is $G_{\delta}$ in the compact
space $B$. Thus $p(\mathcal D)$ is a topologically
complete (i.e. completely metrizable) space, hence a Baire space
(see, e.g.,~\cite[Theorems 12.1 and 9.1]{Oxt}).   \hfill $\square$
\end{proof}

In the situation from Lemma~\ref{L:Lints}, let $\Delta \subseteq
M_a^{S_s}$ be \emph{connected}. Then it is a graph and
obviously there exist $m,n\geq 0$ such that every sufficiently small
connected open $\Gamma_a$-neighborhood $V$ of $\Delta$ has the
following properties:
\begin{itemize}
\item[$\bullet$] $V$ is connected and (see Lemma~\ref{L:Lints}) $M\cap V \subseteq M_a^{S_s}$,
\item[$\bullet$] $V\setminus \Delta$ consists of pairwise disjoint open arcs $\{a\}\times I_1^{\Gamma},\dots, \{a\}\times I_m^{\Gamma},
\{a\}\times J_1^{\Gamma},\dots,$ \ $\{a\}\times J_n^{\Gamma}$ where the
arcs $\{a\}\times I_i^{\Gamma}$ are subsets of $M_a^{S_s}$ and the
arcs $\{a\}\times J_i^{\Gamma}$ are disjoint from $M_a$. Each of
these arcs is attached to $\Delta$ at an end-point of $\Delta$ or at
a ramification point of $\Gamma_a$ (an end-point of $\Delta$ can
simultaneously be a ramification point of $\Gamma_a$).
\end{itemize}

We extend the notion of a ramification point as follows. If $G$ is a (not
necessarily closed and not necessarily connected) subset of a graph $\Gamma$
and $g\in G$, we say that $g$ is a \emph{ramification point of $G$}
if there is a $G$-open neighborhood of $g$ which has the form of
an open $r$-star with $r\geq 3$ and with central point $g$.

By an \emph{open graph} we mean a graph without its end-points if it
has any.  So, since a graph is a union of finitely many connected
graphs, an open graph is a union of finitely many connected open
graphs, whose closures are pairwise disjoint. Notice that, by this
definition, a graph having no end-points (in particular, a circle) is
also an open graph and that a circle with one point removed is not
an open graph. If an open graph $G$ is a subset of a graph $\Gamma$
then $G$ need not be an open set in $\Gamma$. Each ramification
point of $G$ is a ramification point of $\Gamma$ but the converse is
not true in general. If $\Gamma$ is a graph and $G\subseteq \Gamma$
is an open graph, by the end-points of $G$ we mean the end-points of
the (closed) graph $\overline{G}$. It follows from the definition of
strongly star-like interior points that the set $M_a^{S_s}$ is open
in the topology of $M_a$ (though not necessarily open in the
topology of $\Gamma_a$). Its connected components are not
necessarily open graphs. For instance, $M_a^{S_s}$ can be a circle
with one point removed. In any case, $M_a^{S_s}$ is a subset of
$\Gamma_a$ and so the notion of a ramification point can be applied
to it.

In the following two technical lemmas we keep the notation from Lemma~\ref{L:Lints}.

\begin{lemma}\label{L:LintsconnectedA}
Let $E = B\times \Gamma$ be a compact graph bundle, $M\subseteq E$ a
compact set and $a\in B$. Suppose that $\Delta = \{a\}\times
\Delta^{\Gamma}\subseteq M_a^{S_s}$ is an arc or a circle and does not contain any ramification point of
$M_a^{S_s}$. Then for any sufficiently small open neighborhood $W$
of $a$ and any sufficiently small connected open neighborhood $U$ of
$\Delta^\Gamma$ as in Lemma~\ref{L:Lints},  it holds that $\mathcal D = W^* \times U^*$, i.e. $\mathcal D$
has the structure of a direct product. Here $a\in W^* \subseteq W$
is some not necessarily $B$-open set. If $\Delta$ is a circle then
$\{a\}\times U^*$ coincides with $\Delta$ and if $\Delta$ is an arc
then $\{a\}\times U^*$ is an open arc containing $\Delta$ (and still
containing no ramification point of $M_a^{S_s}$).
\end{lemma}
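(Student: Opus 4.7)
The plan is to exploit the absence of ramification points of $M_a^{S_s}$ along $\Delta$ to reduce each canonical $\Lint_s(M)$-neighborhood covering $\Delta$ to a direct-product form with arc fibre, and then to combine the boundary inclusion supplied by Lemma~\ref{L:Lints} with the connectedness of $U^*$ to force every fibre $\mathcal{D}_z^\Gamma$ to coincide with $\mathcal{D}_a^\Gamma$. To set up the local product structure, fix any $x = (a, x_2) \in \Delta$. Since $x \in \Lint_s(M)$ but $x_2$ is not a ramification point of $M_a^{S_s}$, the integer $k$ in Definition~\ref{D:strongly star-like} must equal $2$: the definition forces $k \geq 2$, while the non-ramification hypothesis precludes $k \geq 3$. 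Consequently every value $k(z)\in\{2,\dots,k\}$ equals $2$, and the only full sub-star of $\Sigma_2$ with two branches is $\Sigma_2$ itself, so each canonical $\Lint_s(M)$-neighborhood of $x$ has the direct-product form $\mathcal{G} = p(\mathcal{G}) \times \Sigma^x$ with $\Sigma^x$ an open arc through $x_2$ lying in $M_a^{S_s}$.

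Next, using compactness of $\Delta$ I would cover it by finitely many such $\mathcal{G}^j = p(\mathcal{G}^j) \times \Sigma^j$, $j=1,\dots,r$, choosing the central points and the arcs $\Sigma^j$ so that they overlap cyclically around $\Delta^\Gamma$ in the circle case and sequentially along $\Delta^\Gamma$ in the arc case; this guarantees that $U^* := \bigcup_j \Sigma^j$ is connected. Because $M_a^{S_s}$ is open in $M_a$ and, by the non-ramification hypothesis, is a $1$-manifold near $\Delta^\Gamma$, choosing $U \subseteq \bigcup_j \Sigma_{N(j)}$ to be a sufficiently small $\Gamma$-open neighborhood of $\Delta^\Gamma$ yields $U \cap M_a = U^*$. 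In the circle case $\Delta^\Gamma$ is an entire component of $M_a$ inside $U$ and $U^* = \Delta^\Gamma$; in the arc case $U^*$ is an open arc properly containing $\Delta^\Gamma$ and staying inside $M_a^{S_s}$ (so free of ramification points of $M_a^{S_s}$). With $W \subseteq \bigcap_j O_j$, Lemma~\ref{L:Lints} then ensures that $\mathcal{D} = M\cap (W\times U)$ satisfies $\mathcal{D}_a^\Gamma = U^*$, $\mathcal{D}_z^\Gamma \subseteq U^*$ and $\overline{\mathcal{D}_z^\Gamma}\setminus \mathcal{D}_z^\Gamma \subseteq \overline{U^*}\setminus U^*$ for every $z \in p(\mathcal{D})$.

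Finally, for each $z \in p(\mathcal{D})$ the product structure of the $\mathcal{G}^j$'s forces $\mathcal{D}_z^\Gamma = \bigcup_{j:\, z\in p(\mathcal{G}^j)} \Sigma^j$, which is nonempty and open in $U^*$. Were it a proper subset of the connected set $U^*$, its openness in $U^*$ would produce a point $q_0 \in U^* \cap (\overline{\mathcal{D}_z^\Gamma}\setminus \mathcal{D}_z^\Gamma)$; by Lemma~\ref{L:Lints} one would then have $q_0 \in \overline{U^*}\setminus U^*$, contradicting $q_0 \in U^*$. Hence $\mathcal{D}_z^\Gamma = U^*$ for every $z \in p(\mathcal{D})$, and putting $W^* := p(\mathcal{D})$ delivers the desired decomposition $\mathcal{D} = W^* \times U^*$, with $W^* \ni a$ and $W^* \subseteq W$ (not necessarily $B$-open, since $p(\mathcal{D})$ is only closed in $W$ by Lemma~\ref{L:Lints}). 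The main obstacle is the geometric control in the circle case: one must shrink $U$ enough to exclude all other nearby components of $M_a$, so that $\mathcal{D}_a^\Gamma$ really equals $\Delta^\Gamma$ rather than a strictly larger portion of $M_a$. Once this is arranged, everything reduces to the clean topological contradiction supplied by the boundary inclusion in Lemma~\ref{L:Lints}.
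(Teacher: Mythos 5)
Your proof is correct, and while it shares the same setup as the paper's (covering $\Delta$ by finitely many canonical $\Lint_s(M)$-neighborhoods, each of which is forced to have the direct-product form $V^j\times\Sigma_2^j$ with arc fibre because the non-ramification hypothesis forces $k=2$), it diverges in the final step. The paper concludes by a combinatorial chaining argument: it observes that if two arcs $\Sigma_2^i$, $\Sigma_2^j$ overlap, then on $\bigcap O_j$ a point belongs to $V^i$ iff it belongs to $V^j$, and then propagates this equality along the chain of overlapping arcs covering the connected set $\Delta$ to conclude that all $V^j$'s agree; $W^*$ is then this common set. You instead invoke the boundary inclusion $\overline{\mathcal{D}_z^\Gamma}\setminus\mathcal{D}_z^\Gamma\subseteq\overline{\mathcal{D}_a^\Gamma}\setminus\mathcal{D}_a^\Gamma$ already supplied by Lemma~\ref{L:Lints}, together with the fact that $\mathcal{D}_z^\Gamma$ is a nonempty open subset of the connected set $U^*=\mathcal{D}_a^\Gamma$: a proper such subset cannot be closed in $U^*$, so it has a relative boundary point $q_0\in U^*$, but the boundary inclusion puts $q_0\in\overline{U^*}\setminus U^*$, a contradiction, forcing $\mathcal{D}_z^\Gamma=U^*$ and $W^*=p(\mathcal{D})$. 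Your route is slightly more abstract and avoids re-deriving the overlap combinatorics by reusing more of Lemma~\ref{L:Lints}; the paper's route is more self-contained and explicit. Both establish $\mathcal{D}=W^*\times U^*$ with the same sets $W^*$, $U^*$ in the end, and the statements about $\{a\}\times U^*$ coinciding with $\Delta$ (circle case) or being an open arc containing $\Delta$ free of ramification points of $M_a^{S_s}$ (arc case) are handled the same way in both.
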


\begin{proof}
Every point from $\Delta$ has an
$M_a$-neighborhood in the form of an open arc and so, since $\Delta \subseteq \Lint_s (M)$, $\Delta$ can be covered by a finite
family of canonical $\Lint_s(M)$-neighborhoods of points from
$\Delta$ which have the form (see the proof of Lemma~\ref{L:Lints})
$$
\mathcal{G}^j= M \cap {(O_j\times \Sigma_{N(j)})} = V^j \times
\Sigma_2^j~.
$$
Here $V^j$ is a (not necessarily $B$-open) set containing $a$ and
$\Sigma_2^j$ is an open arc in $\Gamma$ such that $\{a\}\times
\Sigma_2^j \subseteq \Lint_s(M)$ contains no ramification point of
$M_a^{S_s}$.

If two open arcs $\Sigma_2^j$ and $\Sigma_2^i$ intersect and $z\in
\bigcap O_j$ then $z\in V^j$ if and only if $z\in V^i$. This
together with the fact that $\Delta$ is connected gives that if
$z\in \bigcap O_j$ then $z$ belongs to all of the sets $V^j$
whenever it belongs to one of them. Now let $U$ be any sufficiently
small connected open neighborhood of $\Delta^\Gamma$ so that
$(\{a\}\times U) \cap M_a \subseteq \{a\}\times \bigcup \Sigma_2^j$.
Further, let $W\subseteq \bigcap O_j$ be any open neighborhood of
$a$. Then the claim holds with $U^* = U\cap \bigcup \Sigma_2^j$ and
$
W^* = \{z\in W :\, z \in V^j \text { for some (hence for all) }
j\}~.
$  \hfill $\square$
\end{proof}

\begin{example}\label{Ex:examplepredC}
Let us  return to  Example~\ref{Ex:exampleA}.
Denoting by $\Delta$ an arc in $M_0$ containing the ramification
point $c$, we see that without assuming that $\Delta$ contains no
ramification point of $M_a^{S_s}$, in
Lemma~\ref{L:LintsconnectedA} one cannot ensure the existence of
$\mathcal{D}$ in the form of a direct product. Further, if $\Delta$
does not contain the ramification point $c$ and is a sub-arc of, say,
$J_1$ we see that one cannot claim  that $W^*$ exists in the
class of $B$-open sets. \hfill $\square$
\end{example}

\begin{example}\label{Ex:exampleC}
Let us  return to  Example~\ref{Ex:exampleB} and put
$\Delta = \{0\} \times \{1/2, 3/2, 5/2\}$. Then $\Delta \subseteq
M_0^{S_s}$ and it does not contain any ramification point of
$M_0^{S_s}$ (even any ramification point of $\Gamma_0$). However,
$\Delta$ is disconnected and there is no $M$-open neighborhood of
$\Delta$ of the product form $W^* \times U^*$. \hfill $\square$
\end{example}

\begin{lemma}\label{L:LintsconnectedB}Let $E = B\times \Gamma$ be a compact graph bundle, $M\subseteq E$ a
compact set and $a\in B$. Suppose that $\Delta = \{a\}\times
\Delta^{\Gamma}\subseteq M_a^{S_s}$ is  a graph possibly degenerate to a singleton (and possibly containing ramification
points of $M_a^{S_s}$, which may or may not be ramification points
of $\Delta$). Then for any sufficiently small open neighborhood $W$
of $a$ and any sufficiently small connected open neighborhood $U$ of
$\Delta^\Gamma$ as in Lemma~\ref{L:Lints}, 
 the following holds.
\begin{itemize}
\item[$\bullet$] $\left ( \{a\} \times (U \setminus \Delta^{\Gamma}  ) \right ) \cap M \subseteq M_a^{S_s}$
is empty or consists of  pairwise disjoint open arcs $\{a\}\times
I_1^{\Gamma},\dots, \{a\}\times I_m^{\Gamma}$ ($m\geq 0$ being
finite and independent on $U$, since $U$ is small enough; $m=0$
means that the described set is empty).
\item[$\bullet$] For each $i=1,\dots, m$ the open arc $\{a\}\times I_i^{\Gamma}$  is attached to $\Delta$ at a point
$p_i=(a,p_i^\Gamma)$ which is an end-point of $\Delta$ or a
ramification point of $M_a^{S_s}$ (an end-point of $\Delta$ can
simultaneously be a ramification point of $M_a^{S_s}$ and it can be
$p_i=p_j$ even if $i\neq j$), and at each of the end-points of
$\Delta$ there is at least one such open arc attached to it. Here
for every $i$, the closure of $\{a\}\times I_i^{\Gamma}$ is an arc
and any two of the sets $\Delta, \overline{\{a\}\times I_i^\Gamma}$,
$i=1,\dots,m$ are either disjoint or intersect only at one of the
`attaching' points $p_i$.
\item[$\bullet$] $\mathcal{D}_a = \Delta$ or $\mathcal{D}_a = \Delta \cup \bigcup _{i=1}^m (\{a\}\times I_i^\Gamma)$,
depending on whether $m=0$ or $m\geq 1$. So, $\mathcal{D}_a$ is an open graph.
\item[$\bullet$] The structure of the corresponding $M$-open neighborhood
$\mathcal{D}= M \cap {(W\times U)}\subseteq \Lint_s(M)$ of $\Delta$
is such that for any $z\in p(\mathcal{D})$, $\mathcal{D}_z^{\Gamma}$
is a union of finitely many open graphs whose closures are pairwise
disjoint, $\mathcal{D}_z^{\Gamma} \subseteq \mathcal{D}_a^{\Gamma}$
and $\End(\overline{\mathcal{D}_z^{\Gamma}}) \subseteq
\End(\overline{\mathcal{D}_a^{\Gamma}})$.
\item[$\bullet$] For any $z\in p(\mathcal{D})$, each of the connected components of $\mathcal{D}_z$ is the union of a
(nonempty, closed) possibly degenerate subgraph of $\{z\}\times \Delta^{\Gamma}$ and some (possibly zero) of the open arcs
$\{z\}\times I_i^{\Gamma}$ with the `attaching' points $(z,p_i^\Gamma)$ belonging to $\mathcal{D}_z$.
If this subgraph is nondegenerate and does have one or more end-points, then at
each of these end-points there is at least \emph{one} of these open arcs attached to it.
If the subgraph is a singleton (which may happen even if $\Delta$ is nondegenerate) then at least \emph{two}
of these open arcs are attached to it.
\end{itemize}
In particular, if $\Delta$ is a tree, possibly degenerate to a
singleton, then:
\begin{itemize}
\item[$\bullet$] For each $z\in p(\mathcal{D})$, the set $\mathcal{D}_z$ contains (a nonempty closed subgraph of $\{z\}\times \Delta^{\Gamma}$,
possibly disconnected, possibly degenerate to a finite set, and) at least \emph{two} of the open arcs $\{z\}\times I_i^{\Gamma}$,
with the `attaching' points $(z,p_i^\Gamma)$ belonging to $\mathcal{D}_z$.
\end{itemize}

\end{lemma}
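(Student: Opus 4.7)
The plan is to refine the construction from Lemma~\ref{L:Lints}. I start by fixing a finite cover of the compact set $\Delta$ by canonical $\Lint_s(M)$-neighborhoods $\mathcal{G}^j = M \cap (O_j\times \Sigma_{N(j)})$, $j=1,\dots,r$, of points of $\Delta$, with small diameters of the $\Sigma_{N(j)}$. The neighborhood $\mathcal{D}=M\cap(W\times U)$ supplied by Lemma~\ref{L:Lints} for $W\subseteq \bigcap_j O_j$ and $U\subseteq \bigcup_j \Sigma_{N(j)}$ already satisfies $\mathcal{D}\subseteq \Lint_s(M)$, $\mathcal{D}_z^{\Gamma}\subseteq \mathcal{D}_a^{\Gamma}$, and $\overline{\mathcal{D}_z^{\Gamma}}\setminus \mathcal{D}_z^{\Gamma}\subseteq \overline{\mathcal{D}_a^{\Gamma}}\setminus \mathcal{D}_a^{\Gamma}$ for every $z\in p(\mathcal{D})$. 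These three facts will drive the whole argument.

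First I would describe $\mathcal{D}_a$. At every point $q\in\Delta$ which is neither a ramification point of $M_a^{S_s}$ nor an end-point of $\Delta$, the local $\{a\}$-fibre structure of $M_a^{S_s}$ is a single open arc, and this arc lies in $\Delta^{\Gamma}$ provided $U$ is taken small enough. Therefore the pieces of $\mathcal{D}_a\setminus \Delta$ can arise only at the end-points of $\Delta$ or at those ramification points of $M_a^{S_s}$ which lie on $\Delta$; near each such point they take the form of finitely many half-open branches inherited from the canonical star-structure of the $\mathcal{G}^j$. A further shrinking of $U$ truncates these branches into the finite, pairwise disjoint family of open arcs $\{a\}\times I_i^{\Gamma}$ with the listed attaching properties, and the finiteness and stability of $m$ follow from the finiteness of the cover $\{\mathcal{G}^j\}$.

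Next I would derive the description of $\mathcal{D}_z$. Using the canonical $\Lint_s(M)$-structure of each $\mathcal{G}^j$, the contribution of $\mathcal{G}^j$ to the fibre over $z$ is $\{z\}\times \Sigma_{k(z),z}^{j}$, a \emph{full} sub-star of $\Sigma^j$ (possibly empty). In particular, each arc $\{a\}\times I_i^{\Gamma}$ of the previous step is either wholly contained in $\mathcal{D}_z$ or wholly absent from it, since any intermediate situation would produce an end-point of $\mathcal{D}_z^{\Gamma}$ in the interior of $\mathcal{D}_a^{\Gamma}$, violating the inclusion $\overline{\mathcal{D}_z^{\Gamma}}\setminus \mathcal{D}_z^{\Gamma}\subseteq \overline{\mathcal{D}_a^{\Gamma}}\setminus \mathcal{D}_a^{\Gamma}$. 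By the same mechanism, $\mathcal{D}_z^{\Gamma}\cap \Delta^{\Gamma}$ is a closed subset of $\Delta^{\Gamma}$, i.e.\ a closed subgraph of $\{z\}\times \Delta^{\Gamma}$ (possibly degenerate), and each connected component of $\mathcal{D}_z$ decomposes as the union of such a subgraph with those open arcs $\{z\}\times I_i^{\Gamma}$ whose attaching points belong to $\mathcal{D}_z$. The fact that $\mathcal{D}_z^{\Gamma}$ consists of only finitely many open graphs with pairwise disjoint closures is likewise read off from the finite cover $\{\mathcal{G}^j\}$.

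Finally, the balance properties at end-points and singletons follow from $\mathcal{D}_z\subseteq \Lint_s(M)$: a point of $\mathcal{D}_z$ with fewer than the required emanating arcs in $\mathcal{D}_z$ would have order $\leq 1$ inside $M_z$ and thus fail to be strongly star-like. Hence every end-point of the subgraph carries at least one attached arc and every singleton component carries at least two. The tree special case is then immediate: any nonempty closed subgraph of a tree $\Delta$ is a disjoint union of trees, whose total number of end-points is at least two (counting singleton components with multiplicity two), giving at least two arcs of $\mathcal{D}_z$ overall. I expect the main technical obstacle to be the careful bookkeeping that aligns the canonical neighborhoods $\mathcal{G}^j$ consistently across overlaps, so that no spurious branches appear in $\mathcal{D}$; this is handled by first passing to a common refinement of the finite cover and then invoking compactness of $\Delta$ only once to select $W$ and $U$.
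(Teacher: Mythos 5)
Your proposal is correct and takes essentially the same approach as the paper, which only sketches the proof as ``completely analogous to Lemma~\ref{L:LintsconnectedA}'' and highlights the key role of $\mathcal{D}\subseteq\Lint_s(M)$; you spell out the finite cover by canonical neighborhoods, the full-sub-star mechanism forcing each arc $\{a\}\times I_i^\Gamma$ to be wholly present or wholly absent from $\mathcal D_z$, and the order-$\geq 2$ constraint at singletons and end-points, which are exactly the ingredients the paper points to.
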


Of course, if $\Delta$ is a singleton, then the last statement of the lemma does not say anything more than the definition
of a strongly star-like interior point of $M$.

\begin{proof} The arguments are completely analogous to those used in  the proof of Lemma Lemma~\ref{L:LintsconnectedA}. In fact, 
 the first three parts are just consequences of our definitions
of $M_a^{S_s}$, ramification points, endpoints and open  graphs.
The rest follows from Lemmas~\ref{L:Lints}, \ref{L:LintsconnectedA} and the remarks above
Lemma~\ref{L:LintsconnectedA}. (Note that a key role is played by the
fact that $\mathcal{D}\subseteq \Lint_s(M)$. For instance, if the intersection of $\mathcal{D}_z$ with
$\{z\}\times \Delta^{\Gamma}$ is a singleton, then at least two open
arcs have to be attached to this singleton, otherwise
$\mathcal{D}_z$ could not be a subset of $\Lint_s(M)$.)  \hfill $\square$
\end{proof}

%%%%%%%%%%%%%%%%%%%%%%%%%%%%%%%%%%%%%%%%%%%%%%%%%%%%%

\section{Proof of Theorem A}\label{S:proofA}

%%%%%%%%%%%%%%%%%%%%%%%%%%%%%%%%%%%%%%%%%%%%%%%%%%%%%

We will use the notation $F_z = F|_{\Gamma_z}$. So, $F_z$ is a map from $\Gamma_z$ into $\Gamma_{f(z)}$.

We start with the following result  partially describing $F$ on its minimal sets in case (A2) of  our Theorem~A.  Its
use simplifies arguments in the proof of Theorem A.

\begin{proposition}\label{P:monot}
Let the assumptions of Theorem~A be satisfied. Let $I_a$ be a closed
arc and $T_b$ be a tree such that $I_a \subseteq M_a^{S_s}$, $T_b
\subseteq M_b$ and  $F(I_a) \subseteq T_b$. If the interior of $I_a$
does not contain any ramification point of $M_a^{S_s}$ then
$F|_{I_a}$ is monotone (hence $F(I_a)$ is an arc or a point).
\end{proposition}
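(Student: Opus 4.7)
The plan is to argue by contradiction: suppose $F|_{I_a}$ is not monotone, and use the resulting fold together with the product structure supplied by Lemma~\ref{L:LintsconnectedA} to produce a redundant open set in $M$, contradicting the minimality of $M$ via Lemma~\ref{L:zbytocna}.

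Since $F|_{I_a}:I_a\to T_b$ is a continuous non-monotone map into a tree, some $y\in F(I_a)$ has disconnected preimage in $I_a$, yielding points $x_1<x_2<x_3$ on $I_a$ with $F(x_1)=F(x_3)=y\neq F(x_2)$. A standard analysis of maps from an arc into a tree then produces a compact sub-arc $I^*$ lying in the interior of $I_a$ and an interior \emph{fold point} $c\in I^*$: after choosing an arc parametrization and possibly swapping sides, there is $\delta>0$ such that $F|_{[c-\delta,c]}$ and $F|_{[c,c+\delta]}$ are each monotone, both images lie on a single branch of $T_b$ emanating from $F(c)$, and $F([c,c+\delta])\subseteq F([c-\delta,c])$. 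Since $I^*$ avoids every ramification point of $M_a^{S_s}$, Lemma~\ref{L:LintsconnectedA} applied with $\Delta=I^*$ yields an open neighborhood $W$ of $a$ in $B$, an open arc $U^*\subseteq\Gamma$ containing $I^{*\Gamma}$, and a set $W^*\subseteq W$ with $a\in W^*$, such that $\mathcal D=M\cap(W\times U^*)=W^*\times U^*$ is a direct product.

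Pick $0<\delta'''<\delta''<\delta$ and define the disjoint sub-arcs
\[
J^+=(c^\Gamma+\delta''',\,c^\Gamma+\delta''),\qquad J^-=[c^\Gamma-\delta,\,c^\Gamma+\delta'''],
\]
open and closed respectively, of $U^*$. The fold structure at $z=a$ ensures $F(\{a\}\times\overline{J^+})\subseteq F(\{a\}\times J^-)$, and the extra slack $\delta'''$ past $c^\Gamma$ places the former arc in the \emph{interior} of the latter (in the branch parametrization, the two endpoints of the $J^+$-image are bounded away from both endpoints of the $J^-$-image). Using uniform continuity of $F$, compactness of the closed sub-arcs, and a local trivialization of the bundle at $b=f(a)$, one shrinks $W$ so that this strict interior inclusion survives for every $z\in W^*$:
\[
F(\{z\}\times J^+)\subseteq F(\{z\}\times J^-),\qquad z\in W^*.
\]
Setting $G=M\cap(W\times J^+)=W^*\times J^+$ we obtain a nonempty open subset of $M$ disjoint from $W^*\times J^-\subseteq M$, and hence $F(G)\subseteq F(W^*\times J^-)\subseteq F(M\setminus G)$. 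Thus $G$ is a redundant open set for the minimal map $F|_M$, contradicting Lemma~\ref{L:zbytocna}.

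The main technical obstacle is the continuity step propagating the fold inclusion from the single fibre over $a$ to the entire slice $W^*$: because the target arcs live in a priori varying fibres $M_{f(z)}$ whose local graph structure can change with $z$, mere continuity of $F$ gives only Hausdorff closeness of image arcs, not set-theoretic containment. Thickening $J^-$ past $c^\Gamma$ by the amount $\delta'''$ is designed precisely to convert the endpoint-sharing inclusion at $z=a$ into a strict interior inclusion, so that via a local trivialization of the bundle near $f(a)$ the fold picture becomes robust under small perturbations in $z$ and the required uniform containment follows.
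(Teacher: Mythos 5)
Your overall strategy is the same as the paper's: deduce a nested pair of image arcs from the non-monotonicity, use Lemma~\ref{L:LintsconnectedA} to propagate the nesting to nearby fibres, and conclude via a redundant open set and Lemma~\ref{L:zbytocna}. However, the claimed reduction to a ``clean fold point'' is a genuine gap. You assert that a continuous non-monotone map from an arc into a tree always admits a point $c$ and a $\delta>0$ with $F|_{[c-\delta,c]}$ and $F|_{[c,c+\delta]}$ monotone and $F([c,c+\delta])\subseteq F([c-\delta,c])$. This is false: a continuous map can be nowhere monotone (a Weierstrass-type nowhere-monotone function into an interval, hence into a tree, is a counterexample), and nothing in the hypotheses forces $F|_{I_a}$ to be piecewise monotone. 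Since your subsequent branch-parametrization bookkeeping for $J^+$ and $J^-$ (``the two endpoints of the $J^+$-image are bounded away from both endpoints of the $J^-$-image'') leans on this local monotonicity, the gap is load-bearing.

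The paper sidesteps local monotonicity entirely: from the disconnected fibre $(F|_{I_a})^{-1}(q)$ it takes $u,v$ in different components, the arc $J_a=[u,v]$, a point $w\in J_a$ with $F(w)\neq q$, notes that each of $F(J_a^1), F(J_a^2)$ (where $w$ splits $J_a$) contains the unique arc $[F(w),q]$ in the tree $T_b$, and extracts directly two \emph{disjoint} closed subarcs $T_a^1,T_a^2\subseteq J_a$ with $F(T_a^1)\subseteq\Inte F(T_a^2)$. This is exactly the nested-pair structure you need, obtained without any fold assumption. If you replace your fold step by this construction, the rest of your argument does go through, but you should also, as the paper does, shrink $T_a^1$ (your $\overline{J^+}$) so that its $F$-image avoids the ramification points of $\Gamma_b$; otherwise the Hausdorff-closeness of $F_z(\{z\}\times J^+)$ to $F_a(\{a\}\times J^+)$ under a local trivialization does not by itself yield the set-theoretic containment you need across fibres.
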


The statement in the parentheses is obvious since a monotone image of an arc cannot be a nondegenerate tree.
Both cases (i.e., $F(I_a)$ is an arc or a point) occur in the example of a noninvertible fibre-preserving
minimal map on the torus in~\cite{KST} (the base is a `horizontal' circle, the fibres are `vertical' circles).
Since in this example there is a vertical arc mapped by $F$ into a point while the vertical circle containing
this arc is mapped onto a circle, the example also shows that the proposition would not be true if $T_b$ were
allowed to contain a circle.

\begin{proof}
It is sufficient to prove a weaker version of the proposition which
is obtained by adding the assumption that neither the end-points of
$I_a$ are ramification points of $M_a^{S_s}$. For if one or both end
points of $I_a$ are ramification points of $M_a^{S_s}$ then, by
applying such a weaker proposition to all sub-arcs $J_a$ of $I_a$
which do not contain end-points of $I_a$, we get the monotonicity of
$F$ on the whole interior of $I_a$. Since the $F$-image of this
interior is a point or a (not necessarily closed) arc and $T_b$ does
not contain a circle, $F$ is obviously monotone on $I_a$.

So, let $I_a$ contain no ramification point of $M_a^{S_s}$ and
suppose, on the contrary, that $F|_{I_a}$ is not monotone. Then
there exists $q \in T_b$ such that $(F|_{I_a})^{-1}(q)\subseteq I_a$
is not connected. Take two points $u,v$ in two different connected
components of $(F|_{I_a})^{-1}(q)$ and consider the (unique) arc
$J_a \subseteq I_a$ with the end-points $u,v$. From the choice of
$u,v$ it follows that there is a point $w \in J_a$ with $F(w)\neq
q$. This point $w$ partitions $J_a$ into two nondegenerate closed
sub-arcs $J_a^1$ and $J_a^2$. The set $F(J_a) = F_a(J_a) \subseteq
T_b$ is a nontrivial continuum (hence a tree) and each of the sets
$F(J_a^1)$ and $F(J_a^2)$ contains the (unique) arc in $T_b$ having
the end-points $F(w)$ and $q$. It follows that the arc $J_a$ contains
two disjoint closed nondegenerate sub-arcs $T_a^1, T_a^2$ such that
$F(T_a^1)$ and $F(T_a^2)$ are closed arcs with $F(T_a^1) \subseteq
\Inte F(T_a^2)$ (where by $\Inte F(T_a^2)$ we mean the arc
$F(T_a^2)$ without its end-points).

Now, since we will work only with some neighborhood of $a$,  we may assume that $E$ has
the structure of a product space, i.e. $E=B\times \Gamma$. So $I_a$ has the form $\{a\} \times I$ and similarly
$T_a^1=\{a\}\times T^1$ and $T_a^2=\{a\}\times T^2$. By Lemma~\ref{L:LintsconnectedA}, there is an $M$-open neighborhood
$\mathcal D$ of $I_a$ which has the product form $\mathcal D = W^* \times U^*$ for some (not necessarily $B$-open)
set $W^* \ni a$ and some open arc $U^*$ containing $I$.

Since $F_a(\{a\}\times T^1) \subseteq \Inte F_a(\{a\}\times T^2)$ and since (by replacing $T^1$ by a smaller arc
if necessary) we may assume that the arc $F_a(\{a\}\times T^1)$ does not contain any ramification point of $\Gamma_b$,
we have $F_x(\{x\}\times T^1) \subseteq \Inte F_x(\{x\}\times T^2)$ also for all $x$ sufficiently close to $a$.
By replacing $W^*$ by its intersection with a small open neighborhood of $a$ if necessary, we may assume that this
is the case for all $x\in W^*$. Then
$$
F|_M (W^*\times \Inte T^1) \subseteq F|_M (W^*\times T^2)\subseteq  F|_M (M\setminus (W^*\times \Inte T^1))~.
$$
Hence the nonempty $M$-open set $W^* \times \Inte T^1$ is redundant for $F|_M$ which contradicts the minimality of $F|_M$.
\end{proof}

When $M\subseteq E$ and $\beta \in \End (M)$, i.e. $\beta \in \End (M_b)$ where $b=p(\beta)$, then still it can
happen that there is an open arc $J\subseteq M_b$ such that $\beta \in J$ (e.g., let $\Gamma_b$ be a 3-star $S_3$
with central point $\beta$, $M_b$ be the union of a 2-star $S_2$ with the same central point $\beta$ and a sequence
of points lying in $S_3\setminus S_2$ and converging to $\beta$). However,  the following lemma holds.

\begin{lemma}\label{L:zmena bety}
Let the assumptions of Theorem~A be satisfied. Suppose that there exists a point in $\End(M) \setminus F(\End_s(M))$.
Then in the same fibre there exists also a point $\beta \in \End(M) \setminus F(\End_s(M))$ such that no open arc
containing $\beta$ exists in $M_b$, $b=p(\beta)$.
\end{lemma}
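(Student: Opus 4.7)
The plan is to produce $\beta$ as a suitable accumulation point of $M_b$ near $\alpha$ coming from a branch of $\Gamma_b$ not used by the open arc $J$ through $\alpha$. If no open arc in $M_b$ contains $\alpha$, I would simply take $\beta=\alpha$; so assume $\alpha$ lies on an open arc $J\subseteq M_b$. First I note that $\End_s(M)$ is closed in the compact set $M$, hence compact, so $F(\End_s(M))$ is closed and $\alpha$ has an open neighborhood $N$ disjoint from it. Any point of $\End(M)$ that I find inside $N$ will automatically lie in $\End(M)\setminus F(\End_s(M))$. Thus it suffices to produce, arbitrarily close to $\alpha$ inside the fibre $M_b$, a point of $\End(M_b)$ that lies on no open arc in $M_b$.

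I would next verify that $\alpha$ is necessarily a ramification point of $\Gamma_b$, say of order $n\geq 3$. A point of order $1$ cannot lie on an open arc at all. If $\ord(\alpha,\Gamma_b)=2$, then, since $J$ is open in $M_b$ and $M_b$ is closed, $M_b$ would contain a small closed subarc of $\Gamma_b$ through $\alpha$, whose interior would be an $M_b$-open neighborhood of $\alpha$ homeomorphic to $\Sigma_2$, contradicting $\alpha\in\End(M_b)$. The arc $J$ uses exactly two of the $n$ branches of $\Gamma_b$ at $\alpha$. The key lemma I would then establish is: on at least one of the remaining $n-2$ branches, call it $B_0$, the set $M_b\cap B_0$ accumulates at $\alpha$ but does not contain any half-closed subarc starting at $\alpha$. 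Otherwise, on each branch not in $J$ the set $M_b$ would locally be either $\{\alpha\}$ or a half-closed arc at $\alpha$, and together with the two branches of $J$ this would exhibit an $M_b$-open neighborhood of $\alpha$ homeomorphic to some $\Sigma_k$ with $k\geq 2$, forcing $\alpha\in\Lint(M_b)$.

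Identifying $B_0$ with $[0,L]$ (so that $\alpha$ corresponds to $0$), the set $D:=M_b\cap B_0$ is closed in $[0,L]$, has $0$ as a limit point, and satisfies $(0,\delta)\setminus D\neq\emptyset$ for every $\delta>0$. Hence the open set $[0,L]\setminus D$ has components of the form $(a_n,b_n)$ with $b_n<L$ and $b_n\to 0$, and each $b_n\in D$ by closedness. For $n$ large, $b_n$ lies in the interior of the edge $B_0$, so $\ord(b_n,\Gamma_b)=2$ and a small $\Gamma_b$-neighborhood of $b_n$ is an arc; everything to the left of $b_n$ in it lies in the gap $(a_n,b_n)$ and hence outside $M_b$. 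Consequently no open arc in $M_b$ can pass through $b_n$, because such an arc would have to enter $b_n$ from both sides. A fortiori $b_n\in\End(M_b)$, since $b_n\in\Lint(M_b)$ with any $k\geq 2$ would supply an open arc through $b_n$. For $n$ sufficiently large $b_n\in N$, and setting $\beta:=b_n$ finishes the proof.

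The main obstacle is the ``irregular branch'' step: extracting, from the bare hypotheses that $\alpha$ is an endpoint of $M_b$ and that $\alpha$ lies on an open arc in $M_b$, the existence of a branch at $\alpha$ on which $M_b$ accumulates at $\alpha$ without forming a half-closed arc. This requires careful bookkeeping of the local structure of $M_b$ at a ramification point of $\Gamma_b$ against the precise definition of $\Lint(M_b)$. Once this branch is in hand, the remaining passage to a gap endpoint in a closed subset of an interval is entirely routine.
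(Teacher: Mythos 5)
Your proof is correct and follows essentially the same route as the paper's own argument: observe that $F(\End_s(M))$ is closed so it suffices to find a nearby point of $\End(M_b)$ lying on no open arc, note $\alpha$ must be a ramification point of $\Gamma_b$, exhibit a branch on which $M_b$ accumulates at $\alpha$ without containing a half-open subarc at $\alpha$, and take right gap-endpoints on that branch as the desired points $\beta_n\to\alpha$. You merely make explicit several steps the paper labels ``necessarily'' and ``obviously'' (the order computation, the dichotomy that forces an irregular branch, and the gap-endpoint construction), which is a faithful elaboration rather than a new approach.
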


\begin{proof}
Choose any $\beta ' \in \End(M) \setminus F(\End_s(M))$ and denote $p(\beta ') =b$. Suppose that $\beta '$ is
contained in an open arc $J\subseteq M_b$. Then, since $\beta ' \notin \Lint (M_b)$, the point $\beta '$ is
necessarily a ramification point of $\Gamma_b$ and in one of the small open branches emanating from $\beta '$ there
are both a sequence of points in $M_b$ converging to $\beta '$ and a sequence of points in $\Gamma_b \setminus M_b$
converging to $\beta '$. Then this branch obviously contains also a sequence of points $\beta _n \to \beta '$ such that,
for every $n$, $\beta_n \in \End(M_b)$ and no open arc in $M_b$ contains $\beta_n$. Now it is sufficient to put
$\beta = \beta_n$ for a sufficiently large $n$, because $F(\End_s(M))$ is a closed set which does not contain $\beta'$.
\end{proof}

We are finally ready to prove our Theorem A.

\vspace{2mm}

\noindent {\bf Theorem A.} {\it
Let $M$ be a minimal set (with full projection onto the base) of a fibre-preserving map in a compact graph bundle $(E,B,p,\Gamma)$. Then there are two mutually exclusive possibilities:
\begin{enumerate}
\item [{ (A1)}] $\overline {\End (M)}=M$ (and this holds if and only if $M$ is nowhere dense in $E$);
\item [{ (A2)}] $\End(M) = \emptyset$ (and this holds if and only if $M$ has nonempty interior in $E$). 
\end{enumerate}
In particular, the fibre-preserving maps in tree bundles have only nowhere dense minimal sets.}

\vspace{2mm}

\begin{proof}
Also the last claim is obvious, since if
$\Gamma$ is a tree then $\End(M) \neq \emptyset$ and we are
therefore in the case (A1). Thus, taking into account
Lemmas~\ref{L:Ends}, \ref{L:End} and \ref{L:End2}, it remains to
prove the dichotomy: either $\overline {\End (M)}=M$ or $\End(M) =
\emptyset$. To this end suppose that $\End M \neq \emptyset$. To
prove that then $\overline {\End (M)}=M$, it sufficies to show
that every point in $\End(M)$ has an $F$-pre-image in $\End_s (M)$.
Indeed, suppose for a moment that 
$F(\End_s (M)) \supseteq \End (M)$. Then $F(\End _s(M)) \supseteq
\overline{\End(M)}= \End_s (M)$ by Lemma~\ref{L:Ends}. It follows
that the nonempty  closed set $\End_s(M)$ is not a proper subset
of $M$, otherwise $(M, F|_M)$ is not minimal, see the equivalence 
(1) $\Leftrightarrow$ (3) in Subsection~\ref{SS:minimality}. So, $\End_s(M) = M$ whence by
Lemma~\ref{L:Ends} we get $\overline {\End (M)}=M$.

Thus, to finish the proof, we suppose that there is a point
$\beta \in \End (M) \setminus F(\End_s(M))$ and we want to get a
contradiction. If we denote $p(\beta) = b$, by Lemma~\ref{L:zmena
bety} we can assume that
\begin{equation}\label{E:beta}
\text{there is no open arc in $M_b$ containing $\beta$.}
\end{equation}

Since $F(M)=M$ and $\beta \notin F(\End_s(M))$, there is a point
$\alpha \in \Lint_s (M)$ with $F(\alpha) =\beta$. Denote $p(\alpha)
=a$. From now on we will work only with neighborhoods of $\Gamma_a$
and $\Gamma_b$ and so, due to the local triviality of the graph
bundle, we may assume that $E= B\times \Gamma$. Let $\ord (\beta,
\Gamma_b) = r \geq 1$, i.e. $\beta = (b, \beta^\Gamma)$ where
$\beta^\Gamma$ is the central point of an open $r$-star in $\Gamma$.
Since the set $F(\End_s (M))$ is closed in $E$ and does not contain
$\beta$, for some $B$-open neighborhood $O$ of $b$ and some open
$r$-star $\Sigma_r$ with the central point $\beta^\Gamma$ the open
$E$-neighborhood $\mathcal{O}^* = O \times \Sigma_r$ and hence also
the $M$-open neighborhood $\mathcal{O}= \mathcal{O}^*\cap M$ of
$\beta$ are disjoint from $F(\End_s(M))$. In view of~(\ref{E:beta}),

\begin{equation}\label{E:beta2}
\begin{split}
\text{the connected component of $M_b \cap \mathcal O$ containing $\beta$ is either the singleton $\beta$} \\
\text{or a (half-closed or closed) arc whose one end-point is
$\beta$.}
\end{split}
\end{equation}

Recall that $F_z = F|_{\Gamma_z}$. Consider the map $F_a:
\Gamma_a \to \Gamma_{b}$ and choose that connected component
$\Delta$ of the set $F_a^{-1}(\beta)\cap M$ which contains the point
$\alpha$. Since $\beta \notin F(\End_s(M))$, we have $\Delta
\subseteq \Lint_s (M)$. The set $\Delta$ is closed, so it is the
singleton $\alpha$ or a (nondegenerate closed) connected subgraph of
$\Gamma_a$ containing $\alpha$. Let $\Delta^{\Gamma}$ be the
counterpart of $\Delta$ in $\Gamma$, i.e., $\Delta=\{a\}\times
\Delta^{\Gamma}$.

Let $W$ be a $B$-open neighborhood of $a$ and $U$ be a connected
$\Gamma$-open neighborhood of $\Delta^{\Gamma}$, both as small as
Lemma~\ref{L:LintsconnectedB} requires.  In what follows,
$\mathcal{D}= M \cap {(W\times U)}\subseteq \Lint_s(M)$,
$I_i^{\Gamma}$ and $p_i = (a, p_i^\Gamma)$ will have the meaning
from this lemma. We will also consider the half-closed arcs
$A_i^\Gamma = \{p_i^\Gamma\} \cup I_i^\Gamma$, $i=1,\dots, m$. Since
$F(\Delta)$ is just the singleton $\beta$, we may also assume that
$W$ and $U$ are small enough to give
\begin{equation}\label{E:nocircle}
\text{$F(\mathcal{D}) \subseteq \mathcal O$, hence none of the sets
$F(\mathcal D_z)$, $z\in W$, contains a circle.}
\end{equation}

\medskip

\emph{Claim.}  There is $d\in p(\mathcal{D})$ such that
$\mathcal{D}_d^\Gamma$ contains no circle (and each component
of $\mathcal{D}_d^\Gamma$ is nondegenerate since
$\mathcal D \subseteq \Lint_s(M)$ and $\mathcal D$ is $M$-open).
Moreover, $m\geq 2$ and $\mathcal{D}_d$ contains at least two
different half-closed arcs from the list $\{d\}\times A_i^{\Gamma}$,
$i=1,\dots, m$.

\medskip

\emph{Prof of Claim.} Let $C_1^\Gamma,  \dots, C_q^\Gamma$, $q\geq
0$, be the list of all (not necessarily pairwise disjoint) circles
in $\Delta^\Gamma$. If $z\in p(\mathcal D)$ then, by
Lemma~\ref{L:LintsconnectedB}, $\mathcal D_z^\Gamma \subseteq
\mathcal D_a^\Gamma = \Delta^{\Gamma} \cup \bigcup _{i=1}^m I_i^\Gamma$ and
$\mathcal D_a^\Gamma$ contains only those circles which are
contained in $\Delta^\Gamma$. So, if $D_z^\Gamma$ contains a circle,
it is necessarily a circle from the list $C_1^\Gamma,  \dots,
C_q^\Gamma$. Denote
\begin{equation}\label{E:n1}
K_i = \{z\in p(\mathcal D):\, \mathcal D_z^\Gamma \supseteq
C_i^\Gamma\}, \quad i=1,\dots,q~. \nonumber
\end{equation}
To prove the claim suppose, on the contrary, that for every $z\in
p(\mathcal D)$, $\mathcal D_z^\Gamma$ contains a circle. Then $q\geq
1$ and
\begin{equation}\label{E:n2}
p(\mathcal{D}) =\bigcup_{i=1}^q K_i~. \nonumber
\end{equation}
Each of the sets $K_i$, $i=1,\dots,q$, is obviously closed in the
set $p(\mathcal D)$ which is, by Lemma~\ref{L:Lints}, a Baire space.
Hence there is $s\in \{1,\dots,q\}$ with
\begin{equation}\label{E:n3}
\Inte{_{p(\mathcal D)}} K_s \neq \emptyset.
\end{equation}
Now fix an arbitrary $j\in \{1,\dots,q\}$ and an open arc
$L_j^\Gamma$ in $C_j^\Gamma$ such that the closure of $L_j^\Gamma$
contains only points of order $2$ in $\Gamma$ (in particular,
$L_j^\Gamma$ has positive distance from the set $\{p_i^\Gamma:\,
i=1,\dots,m\}$). Observe that then for every $z\in K_j$ the map
$F_z$ is, by Proposition~\ref{P:monot} (see
also~(\ref{E:nocircle})), monotone on $\{z\}\times L_j^\Gamma$ and
so $F_z(\{z\}\times L_j^\Gamma)$ is an open, closed or half-closed
arc, possibly degenerate to a point. Since $F_z(\mathcal D_z)$ is
by~(\ref{E:nocircle}) a tree (which is a uniquely arcwise connected
space), we have that $F_z(\{z\}\times (C_j^\Gamma \setminus
L_j^\Gamma)) \supseteq F_z(\{z\}\times L_j^\Gamma)$. Hence
\begin{equation}\label{E:n4}
F(S \times L_j^\Gamma) \subseteq F(M\setminus (S \times L_j^\Gamma))
\quad \text{for any set $S\subseteq K_j$ , $j\in \{1,\dots,q\}$}~.
\end{equation}
Note also that here $S \times L_j^\Gamma \subseteq M$.

Then by~(\ref{E:n4}), for $j=s$ and $S= \Inte_{p(\mathcal D)} K_s$
we obtain $F(\Inte_{p(\mathcal D)} K_s \times L_s^\Gamma) \subseteq
F(M\setminus (\Inte_{p(\mathcal D)}K_s \times L_s^\Gamma))$.
Therefore, since the set $\emptyset \neq \Inte_{p(\mathcal D)} K_s
\times L_s^\Gamma \subseteq M$ is obviously open in the topology of
$M$, the set $\Inte_{p(\mathcal D)} K_s \times L_s^\Gamma$ is a
redundant open set for $F|_M$, which contradicts the minimality of
$F|_M$. We have thus proved that there exists $d \in p(\mathcal D)$
such that $\mathcal D_d^\Gamma$ contains no circle.

Applying now the last assertion of Lemma \ref{L:LintsconnectedB}, we
find  that $\mathcal{D}_d$ contains at least two different
half-closed arcs from the list $\{d\}\times A_i^{\Gamma}$,
$i=1,\dots, m$. Thus $m \geq 2$ which finishes the proof of the claim.~
\hfill $\checkmark$$\checkmark$$\checkmark$

\medskip

Next, we will replace $W$ by a smaller open neighborhood of $a$ and
$U$ by a smaller connected open neighborhood of $\Delta^\Gamma$ so
that $\mathcal D$ have an additional nice property. We are going to
show how to do that. Note also that the Claim will still work.

Recall that, by the Claim, $m\geq 2$. The attaching points $p_i = (a,
p_i^\Gamma)$, $i=1,2\dots, m$ belong to $\Delta$ and so are mapped
to the point $\beta$. On the other hand, $\Delta$ is disjoint with
the open arcs $\{a\}\times I_i^{\Gamma}$. Therefore each of the sets
$F(\{a\}\times A_i^\Gamma)$ is a \emph{nondegenerate} connected set
in $M_b$ containing $\beta$. Taking into account~(\ref{E:beta2}), we
see that each of these sets is in fact a closed or half-closed arc
containing $\beta$ as one of its end-points  (so that  the connected component of $M_b \cap \mathcal O$
containing $\beta$ is not  a singleton, 
see~(\ref{E:beta2})), and $F(\{a\}\times A_i^\Gamma) \subseteq
F(\{a\}\times A_j^\Gamma)$ or $F(\{a\}\times A_j^\Gamma) \subseteq
F(\{a\}\times A_i^\Gamma)$ whenever $i, j \in \{1,\dots, m\}$. By
replacing the half-closed arcs $A_i^\Gamma$ by shorter ones (i.e.,
by replacing $U$ by a smaller connected open neighborhood of
$\Delta^\Gamma$) if necessary, we may assume that each of the
half-closed arcs $\{a\}\times A_i^\Gamma$ is monotonically
(see~(\ref{E:nocircle}) and Proposition~\ref{P:monot}) mapped by $F$
onto the \emph{same} half closed arc $H$ with the end-point $\beta
\in F(\{a\}\times A_i^\Gamma)$ and another end-point $\beta^* \notin
F(\{a\}\times A_i^\Gamma)$.

Now fix $k\in \{1,\dots,m\}$ and choose a small open arc $J_k =
\{a\}\times J_k^\Gamma$ such that the closure of $J_k$ lies in the
interior of $\{a\}\times A_k^{\Gamma}$ and the closure of $F(J_k)$
lies in the interior of $H$. Then
the closure of $F(\{a\}\times J_k^\Gamma)$ lies in the
interior of $F(\{a\}\times A_i^\Gamma)$ for every $i=1, 2\dots, m$.
By continuity, and replacing $W$ by a smaller neighborhood of $a$ if
necessary, we may assume that
\begin{equation}\label{E:sn1}
\text{$F(\{z\}\times J_k^\Gamma) \subseteq F(\{z\}\times
A_i^\Gamma)$ for every $z\in W$ and $i=1,2\dots,m$.}
\end{equation}
Note that this holds (i.e., such a $J_k^\Gamma$ exists) for any
$k\in \{1,\dots, m\}$.

Now we can finish the proof.  By the Claim, there
exists $d \in p(\mathcal D)$ such that $\mathcal{D}_d$ does not
contain any circle and contains at least two different half-closed
arcs, say $\{d\}\times A_1^{\Gamma}$ and $\{d\}\times A_2^{\Gamma}$.
Both these properties are shared by all the points $z\in
p(\mathcal{D})$ sufficiently close to the point $d$. Indeed, $M$ is
closed and $\Gamma$ contains only finitely many circles and so, if
$z\in p(\mathcal{D})$ is close to $d$, neither the set
$\mathcal{D}_z$ can contain a circle. But then, using the same
argument as for the point $d$ (see the very end of the proof of
the Claim), the set $\mathcal{D}_z$ also contains at least \emph{two} of
the half-closed arcs $\{z\}\times A_i^{\Gamma}$. It follows that for
any $z\in p(\mathcal D)$ close to $d$ there is at least one $i\neq
1$ such that $\{z\}\times A_i^\Gamma \subseteq M$ and so, regardless
of whether $\{z\}\times J_1^\Gamma \subseteq \{z\}\times A_1^\Gamma$
is a subset of $M$ or is disjoint from $M$, the
condition~(\ref{E:sn1}) applied to $k=1$ gives $F(M_z\setminus
(\{z\}\times J_1^\Gamma)) \supseteq F(M_z\cap (\{z\}\times
J_1^\Gamma))$. Hence, for sufficiently small neighborhood
$W_1\subseteq W$ of $d$  we have $F(M\setminus (W_1\times J_1^\Gamma))
\supseteq F(M\cap (W_1\times J_1^\Gamma))$ and so the nonempty
$M$-open set $M\cap (W_1\times J_1^\Gamma)$ is redundant for $F|_M$,
a contradiction with minimality of $F|_M$. \hfill $\square$
\end{proof}

%%%%%%%%%%%%%%%%%%%%%%%%%%%%%%%%%%%%%%%%%%%%%%%%%%%%%

\section{Proof of Theorem~C}\label{S:proofC}

%%%%%%%%%%%%%%%%%%%%%%%%%%%%%%%%%%%%%%%%%%%%%%%%%%%%%

If $M \subseteq E$ is a closed set with $\End (M)
= \emptyset$, we have $\End (M_b) = \emptyset$ for every $b\in B$
and so every set $M_b$ is a (possibly disconnected) graph without
end-points (this in particular means that for every $b\in B$ the set
$M_b$ contains at least one circle). We will be interested in
whether such a graph $M_b$ has a ramification point or not. Of
course, $M_b$ does not have any ramification point if and only if it
is a union of disjoint circles. Denote
$$\mathcal R_B(M) := \{b \in B: M_b \text{ has a ramification point}\}~,$$
$$\mathcal R_E(M) := \{\gamma \in E:  \gamma \text{ is a ramification point of } M_{p(\gamma)}\}~.$$

\begin{lemma}\label{L:ramif1}
Let $E = B\times \Gamma$ be a compact graph bundle and $M\subseteq E$ a closed set with $\End (M) = \emptyset$.
\begin{enumerate}
\item [(a)] If $U$ is an open ball in $B$ with $U\subseteq \mathcal R_B(M)$ then there are an open ball $V\subseteq U$  and a ramification point $q$ of $\Gamma$ such that $V\times\{q\} \subseteq \mathcal R_E(M)$.
\item [(b)] Let $q$ be a ramification point of $\Gamma$ of order $N$ and $V$ be an open ball in $B$ with $V\times\{q\} \subseteq \mathcal R_E(M)$. Let
    an open star $\Sigma_N \subseteq \Gamma$ with central point $q$ be a $\Gamma$-open neighborhood of $q$ (i.e., $\Sigma_N$ contains no ramification point of $\Gamma$ different from $q$). Then there are a full sub-star $\Sigma_k$ of $\Sigma_N$ with $k\geq 3$ and an open ball $W\subseteq V$ in $B$ such that $(W \times \Sigma_N) \cap M = W\times \Sigma_k$ (hence $W\times \Sigma_k$ is an $M$-open set).
\end{enumerate}
\end{lemma}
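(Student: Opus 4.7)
The plan is to prove both parts by Baire category arguments applied to natural decompositions exploiting the hypothesis $\End M = \emptyset$. By Definition~\ref{D:star-like in bundle}, this hypothesis means every point of every fibre $M_b$ has an $M_b$-open neighborhood homeomorphic to an open $k$-star with $k\geq 2$. In particular, a ramification point of $M_b$ (where the local $k$ is at least $3$) must lie above a point of $\Gamma$ of order at least $3$, since the $k\geq 3$ arcs of $M_b$ emanating from it are contained in arcs of $\Gamma$. Hence $\mathcal R_E(M)\subseteq B\times\{q_1,\dots,q_r\}$, where $q_1,\dots,q_r$ are the ramification points of $\Gamma$.

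For part (a), this gives the finite cover $U=\bigcup_{i=1}^{r}A_i$ with $A_i=\{b\in U:(b,q_i)\in\mathcal R_E(M)\}$. Fix, for each $i$, a small open $N_i$-star $\Sigma_{N_i}$ centered at $q_i$ containing no other ramification points of $\Gamma$, and label its branches $J_1^i,\dots,J_{N_i}^i$. For every $S\subseteq\{1,\dots,N_i\}$ with $|S|\geq 3$ and rational $\epsilon>0$, set
$$A_i^{S,\epsilon}=\{b\in U:\{b\}\times J_j^i[\epsilon]\subseteq M\text{ for all }j\in S\},$$
where $J_j^i[\epsilon]$ is the closed sub-arc of $J_j^i$ of length $\epsilon$ having $q_i$ as an endpoint. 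Closedness of $M$ and compactness of $J_j^i[\epsilon]$ make $A_i^{S,\epsilon}$ closed in $U$, while the fibrewise star-like structure at each $b\in A_i$ shows $b\in A_i^{S(b),\epsilon}$ for some $S(b)$ with $|S(b)|\geq 3$ and some small rational $\epsilon$. Thus $U$ is a countable union of closed sets, so ordinary Baire category (available since $U$ is a Baire space) produces an open ball $V$ contained in some $A_i^{S,\epsilon}$. For every $b\in V$ the point $(b,q_i)$ has at least three arcs of $M_b$ emanating from it, hence is a ramification point of $M_b$, and setting $q:=q_i$ finishes part (a).

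For part (b), the key structural claim is that for every $z\in V$ one has $M_z\cap(\{z\}\times\Sigma_N)=\{z\}\times\Sigma_{S(z)}$, a full sub-star, for some $S(z)\subseteq\{1,\dots,N\}$ with $|S(z)|\geq 3$. Indeed, since $\Sigma_N$ contains no ramification points of $\Gamma$ other than $q$ and, being an open star, no endpoints of $\Gamma$, every $x\in\Sigma_N\setminus\{q\}$ has $\ord(x,\Gamma)=2$; so if $(z,x)\in M$ the star-like property forces $\ord((z,x),M_z)=2$, which makes $M_z\cap(\{z\}\times J_j)$ open in the arc $\{z\}\times J_j$. Closedness of $M$ and connectedness of $J_j$ then force this intersection to be empty or all of $\{z\}\times J_j$; combined with the ramification at $(z,q)$, this yields $|S(z)|\geq 3$. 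Writing $\Sigma_S=\{q\}\cup\bigcup_{j\in S}J_j$, partition $V=\bigsqcup_{|S|\geq 3}V_S$ with $V_S=\{z\in V:S(z)=S\}$, and observe that $V_S=V_S^{+}\setminus\bigcup_{S'\supsetneq S}V_{S'}$, where $V_S^{+}=\{z\in V:\{z\}\times\Sigma_S\subseteq M\}$. Both $V_S^{+}$ and $\bigcup_{S'\supsetneq S}V_{S'}$ are closed in $V$: the first by a pointwise limit argument using closedness of $M$ on each $J_j$, the second because along a subsequence with constant $S'$, the limit fibre contains $\{z\}\times\Sigma_{S'}$, forcing $S(z)\supseteq S'\supsetneq S$. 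Hence each $V_S$ is locally closed, and Lemma~\ref{L:Baire} applied to the finite family $\{V_S\}$ yields an $S$ such that $V_S$ contains an open ball $W$. By construction $(W\times\Sigma_N)\cap M=W\times\Sigma_S$, so $k:=|S|\geq 3$ and $\Sigma_k:=\Sigma_S$ conclude the proof. The main obstacle is verifying local closedness of $V_S$, which relies on both the pointwise limit argument and the global "full sub-star" description of the fibres of $M$ within $\Sigma_N$.
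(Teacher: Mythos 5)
Your proof is correct and follows essentially the same route as the paper: part~(b) uses the same ``signature'' decomposition $V = \bigsqcup_S V_S$ of the paper (your reformulation of $V_S$ as a difference $V_S^+ \setminus \bigcup_{S'\supsetneq S} V_{S'}$ of two closed sets is just a slightly more explicit way to verify local closedness before invoking Lemma~\ref{L:Baire}), and part~(a) is the same Baire argument over ramification points of $\Gamma$, with the $\epsilon$-parametrized closed sets $A_i^{S,\epsilon}$ filling in the details the paper leaves implicit.
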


\begin{proof}
(a) For each $u\in U$ there is $q_u$ in $\Gamma$ such that $(u,q_u)\in M$ is a ramification point of $M_u$. Since there are only finitely many ramification points in $\Gamma$ and the set $M$ is closed, we get that the same $q$ works for all $u$ in a subset of $U$ with nonempty interior.

(b) For all $v\in V$, $(v,q)$ is a ramification point of $M_v$. The neighborhood $\Sigma_N$ of $q$ is a disjoint union of the point $q$ and $N$ open arcs emanating from $q$. If $v\in V$ and $I$ is one of these open arcs then $\{v\}\times I$ is either a subset of $M_v$ or disjoint with $M_v$ (because the graph $M_v$, possibly disconnected, has no end-points). Let $I_i$, $i=1,\dots,r$ be the list of those of the $N$ open arcs for which $\{v\}\times I_i \subseteq M_v$ for at least one $v\in V$. We say that $v\in V$ has \emph{signature} $\lambda = \{i_1,\dots,i_s\}$ if $M_v$ contains from this list just the open arcs $\{v\}\times I_{i_1},\dots, \{v\}\times I_{i_s}$. So, the signature $\lambda$ is a subset (with cardinality at least three) of
$\{1,\dots,r\}$. Let $\Lambda$ be the family of signatures of all
points $v\in V$. Then $\Lambda$ is finite and if $S_\lambda$
is the set of all points $v\in V$ with signature $\lambda$, then $V=
\bigcup_{\lambda \in \Lambda} S_\lambda$. Until the end of the proof we will work in (the topology of) the Baire space $V$. Denote by $\overline{S_\lambda}$ the closure (in $V$) of $S_\lambda$. We claim that
$\overline{S_\lambda}\setminus S_\lambda$ is closed in
$\overline{S_\lambda}$, i.e., $S_\lambda$ is locally closed (in $V$).
The reason is as follows. If $x\in \overline{S_\lambda}$ has signature $\mu$ then, since $M$ is closed, $\mu \supseteq \lambda$. If $x \in \overline{S_\lambda}\setminus S_\lambda$ then $\mu \supsetneq \lambda$. This property of having the signature strictly larger than $\lambda$ is obviously inherited by the limit of a sequence of points from $\overline{S_\lambda}\setminus S_\lambda$. It follows that $\overline{S_\lambda}\setminus S_\lambda$ is closed. So, applying Lemma~\ref{L:Baire} to the Baire space $V$ we get that there is an open ball $W$ in $V$ (hence $W$ is an open ball in $B$) such that all points $w\in W$ have the same signature $\{i_1,\dots,
i_k\}$ (of cardinality $k\geq 3$). It follows the existence of a full sub-star  $\Sigma_k$ of $\Sigma_N$ with the required properties. \hfill $\square$
\end{proof}

\begin{lemma}\label{L:ramif2} Let $M$ be a minimal set (with full projection onto the base) of a fibre-preserving map $F$ in a direct product graph bundle $E= B \times \Gamma$. Assume that  $\End (M) =\emptyset$.
Suppose that an open ball $V$ in $B$ and a ramification point $q\in \Gamma$ are such that $V\times \{q\} \subseteq \mathcal R_E(M)$. Then there are an open ball $V^*\subseteq V$  and a ramification point $\tilde q$ of $\Gamma$ such that $F(V^*\times \{q\}) = f(V^*) \times \{\tilde q\} \subseteq \mathcal R_E(M)$.
The same is true for closed balls instead of open ones.
\end{lemma}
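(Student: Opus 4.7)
The plan is to prove the lemma in two stages: first, show that on some open sub-ball $V_1 \subseteq V$ the image $F(V_1 \times \{q\})$ lies in the closed set $\mathcal{R}_E(M)$; second, refine $V_1$ to a sub-ball $V^*$ on which $\pr_2 F(v,q)$ takes a constant value $\tilde{q}$. I would begin by applying Lemma~\ref{L:ramif1}(b) to obtain an open sub-ball $W \subseteq V$ and a full sub-star $\Sigma_k$ of $\Sigma_N$ (with $k \geq 3$) such that $(W \times \Sigma_N) \cap M = W \times \Sigma_k$, making $W \times \Sigma_k$ an $M$-open subset of $\Lint_s(M)$ in which each $(v,q)$ has exactly $k$ incident branches of $M_v$. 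I would also record that $\mathcal{R}_E(M)$ is closed in $E$: for a convergent sequence $(b_n, \tilde{q}_n) \to (b, \tilde{q})$ of ramification points, the finiteness of ramification points of $\Gamma$ forces $\tilde{q}_n \equiv \tilde{q}$ along a subsequence, and closedness of $M$ together with a pigeonhole on the $\geq 3$ branches at $\tilde{q}$ that lie in each $M_{b_n}$ makes $\tilde{q}$ a ramification of $M_b$.

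Stage 1 is the heart of the argument. If the closed set $S := \{v \in W : F(v,q) \in \mathcal{R}_E(M)\}$ already has nonempty interior, I take $V_1 \subseteq \Inte S$ and am done; otherwise $W \setminus S$ is open dense, and after passing to a further open sub-ball (still denoted $W$) I may assume $\phi(v) := \pr_2 F(v,q)$ is never a ramification of $M_{f(v)}$. Since $\End(M)=\emptyset$, $\phi(v)$ then has order exactly $2$ in $M_{f(v)}$ and $M_{f(v)}$ is locally an arc at $\phi(v)$. Fixing $v_0 \in W$ and a local arc $L \subseteq M_{f(v_0)}$ at $\phi(v_0)$ containing no ramification of $M_{f(v_0)}^{S_s}$, for each branch $J_i$ of $\Sigma_k$ at $q$ I take a short closed sub-arc $I_i^\Gamma \subseteq \overline{J_i}$ from $q$ to an interior point $p_i$, small enough that $F(\{v_0\}\times I_i^\Gamma) \subseteq \{f(v_0)\}\times L$. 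Proposition~\ref{P:monot} then makes each $F|_{\{v_0\}\times I_i^\Gamma}$ monotone, with image a (possibly degenerate) sub-arc of $\{f(v_0)\}\times L$ starting at $(f(v_0),\phi(v_0))$. Since $L$ offers only two directions at $\phi(v_0)$ and there are $k \geq 3$ images, pigeonhole yields indices $i \neq j$ with $F(\{v_0\}\times I_i^\Gamma) \subseteq F(\{v_0\}\times I_j^\Gamma)$.

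The main obstacle is propagating this containment from the single fibre over $v_0$ to an open neighborhood, in order to produce a redundant $M$-open set. For each pair $i \neq j$ I define $S_{i,j} := \{v \in W : F(\{v\}\times I_i^\Gamma) \subseteq F(\{v\}\times I_j^\Gamma)\}$ and verify that $S_{i,j}$ is closed in $W$: if $v_n \to v$ with $v_n \in S_{i,j}$, then for each $x \in I_i^\Gamma$ we can write $F(v_n, x) = F(v_n, y_n)$ with $y_n \in I_j^\Gamma$, and a subsequence limit $y_n \to y$ together with continuity of $F$ gives $F(v,x) = F(v,y) \in F(\{v\}\times I_j^\Gamma)$. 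By replacing $W$ with a small open sub-ball around $v_0$ (so that uniform continuity of $F$ and closedness of $\mathcal{R}_E(M)$ ensure, for every $v$ there, that the images $F(\{v\}\times I_i^\Gamma)$ fit inside a local arc of $M_{f(v)}$ at $\phi(v)$ and the same pigeonhole applies), one obtains $W = \bigcup_{i\neq j} S_{i,j}$, and Baire's theorem yields a pair $(i_0,j_0)$ with nonempty $W_1 := \Inte S_{i_0,j_0}$. Then $U := W_1 \times \Inte I_{i_0}^\Gamma$ is a nonempty $M$-open set (being open inside the $M$-open set $W \times \Sigma_k$); by construction $F(U) \subseteq F(W_1 \times I_{j_0}^\Gamma)$, while $W_1 \times I_{j_0}^\Gamma \subseteq M \setminus U$ because $\Inte I_{i_0}^\Gamma$ and $I_{j_0}^\Gamma$ lie on disjoint branches of $\Sigma_k$; thus $U$ is redundant for $F|_M$, contradicting minimality by Lemma~\ref{L:zbytocna}. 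For Stage~2, $\phi$ restricted to $V_1$ is continuous into the \emph{finite} set of ramification points of $\Gamma$, so Baire on its closed level sets yields an open sub-ball $V^*$ with $\phi \equiv \tilde{q}$, giving $F(V^*\times\{q\}) = f(V^*)\times\{\tilde{q}\} \subseteq \mathcal{R}_E(M)$; the closed-ball version of the lemma follows by taking any closed ball inside $V^*$.
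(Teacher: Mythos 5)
Your argument is correct and reaches the same conclusion, but it takes a genuinely different technical route through the critical step. The paper first fixes an arbitrary $a\in W$ and supposes $\beta=F(a,q)\notin\mathcal R_E(M)$; it then invokes Lemma~\ref{L:LintsconnectedA} to produce an $M$-open \emph{product} neighborhood $W_b\times\Sigma_2$ of $\beta$, which supplies a \emph{uniform} local arc for every fibre meeting $W_b$; the contradiction is then derived by splitting into two cases (some $x$ sends $\geq 3$ branches onto nondegenerate arcs; or every $x$ collapses some branch to a point) and applying Baire in the second case. You instead avoid Lemma~\ref{L:LintsconnectedA} entirely: you exploit the closedness of $\mathcal R_E(M)$ together with the small diameter of $\overline L$ to force, for $v$ near $v_0$, that each image $F(\{v\}\times I_i^\Gamma)$ is a compact connected subset of $M_{f(v)}$ disjoint from $\mathcal R_E(M)$, hence (being small, ramification-free, and containing $\phi(v)$) an arc through $\phi(v)$; and you unify the paper's two cases into the single closed condition $S_{i,j}=\{v:F(\{v\}\times I_i^\Gamma)\subseteq F(\{v\}\times I_j^\Gamma)\}$, which automatically covers the degenerate case. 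What your route buys is a more uniform treatment and one fewer structural lemma; what it costs is that the step asserting ``the images fit inside a local arc of $M_{f(v)}$ at $\phi(v)$ and the same pigeonhole applies'' compresses several facts (images disjoint from $\mathcal R_E(M)$; small enough that no circle of $M_{f(v)}$ fits; Proposition~\ref{P:monot} applies again at each $v$ with $T_b$ being the image arc itself; and $\phi(v)$ is an endpoint since topological monotonicity on an interval forces weak monotonicity). These are all true and do go through, but they deserve to be spelled out, since the case where $L^\Gamma$ passes through a ramification point of $\Gamma$ that is \emph{not} a ramification of $M_{f(v_0)}$ is exactly where a naive ``$\epsilon$-neighborhood of $L^\Gamma$ is an arc'' argument would fail; what saves you is that the images live in $M_{f(v)}\setminus\mathcal R_E(M)$, not in $\Gamma$.

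One minor point: the paper proves the stronger statement that $F(a,q)\in\mathcal R_E(M)$ for \emph{every} $a\in W$, whereas you only establish that the closed set $S$ has nonempty interior; since the lemma only asks for a sub-ball, your weaker conclusion suffices. Also, your Stage~2 uses Baire, but since $\phi$ is continuous into a finite (discrete) set, it is locally constant, so any small enough ball works directly; Baire is overkill but harmless.
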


\begin{proof} Choose $\Sigma_N$, $\Sigma_k$ and $W$ by Lemma~\ref{L:ramif1}(b). It is obviously sufficient to show that $F(W\times \{q\})\subseteq \mathcal R_E(M)$. Indeed, then (since $\Gamma$ has only finitely many ramification points and $F$ is continuous) for any sufficiently small open ball $V^*$ in $B$ such that $V^* \subseteq W$, the second projection of the set $F(V^*\times \{q\}) \subseteq \mathcal R_E(M)$ will be just a singleton $\tilde q$ (a ramification point of $\Gamma$).

So, fix any $a\in W$ (from now on we will write $W_a$ instead of $W$, to indicate that it contains $a$) and put $\alpha = (a, q)$, $\beta = F(\alpha) =
(b,p)$ (of course, $b=f(a)$ and $\beta \in M = \Lint_s(M)$). We are going to prove that $\beta \in \mathcal R_E(M)$.

Suppose, on the contrary, that $\beta \notin \mathcal R_E(M)$. Then $\beta \in \Lint_s
(M)\setminus \mathcal R_E(M)$ and so one can apply Lemma~\ref{L:LintsconnectedA}
to a small arc in $M_b$ containing $\beta$, to obtain that there is an
$M$-open neighborhood of $\beta$ in the form $W_b \times \Sigma_2$
where $W_b$ contains $b$ but it need not be a $B$-neighborhood of $b$ and $p$ is the central point of $\Sigma_2$. Recall that $W_a$ is a $B$-open neighborhood of $a$ and $W_a \times \Sigma_k$ is an $M$-open neighborhood
od $\alpha\in \Lint_s(M)$. Since $F$ is continuous, we may assume that $W_a$ and $\Sigma_k$ are small enough so that $F(W_a \times \Sigma_k)
\subseteq W_b \times \Sigma_2$. We are going to show that there
exists a redundant open set for $F|_M$, which will contradict the
minimality of $F|_M$. To this end consider two cases.

First assume that  there exists $x\in W_a$ such that at least three
different (half-closed) branches of $\{x\} \times \Sigma_k$ are
mapped by $F$ onto nondegenerate sets, i.e., onto (not necessarily
closed) arcs containing the point $F(x,q)$. Then there is a point in $\{f(x)\} \times \Sigma_2$ different from
$F(x,q)$ which is $F$-covered twice, by points $P,Q$ belonging to
different branches of $\{x\} \times \Sigma_k$. Hence, some open
arc $\{x\}\times J$ in the branch containing $P$ is such that the
closure of its $F$-image lies in the interior (in topology of
$M_{f(x)}$) of the $F$-image of the branch containing $Q$. Since
such a property carries over to all fibres close to the fibre over
$x$, the existence of a redundant open set for $F|_M$ easily
follows.

So, for every $x\in W_a$ there are at most two of $k$ branches of
$\{x\} \times \Sigma_k$ which are mapped by $F$ to nondegenerate
sets. If we denote by $J_1, \dots, J_k$ the branches of $\Sigma_k$
and by $W^i$ the set of all $x\in W_a$ with $F(\{x\}\times J_i) =
F(x,q)$, then  $W^i$ is closed in $W_a$ and, since $k\geq
3$, we have $W_a = \bigcup_{i=1}^k W^i$. Since $W_a\times \Sigma_k =
\bigcup_{i=1}^k (W^i \times \Sigma_k)$ and the sets $W^i \times
\Sigma_k$ are closed in $W_a\times \Sigma_k$, there is $i_0$
such that $W^{i_0} \times \Sigma_k$ has nonempty interior in
$W_a\times \Sigma_k$. It follows that $W^{i_0}$
has nonempty interior in $W_a$. Thus there is a set $\emptyset
\neq \Omega \subseteq W^{i_0}$ open in $W_a$.
So, if $A$ is an open arc lying in $J_{i_0}$, the set $\Omega \times A$
is open in $W_a \times \Sigma_k$, hence open in $M$. Since it is 
redundant  for $F|_M$, the proof is finished. \hfill $\square$
\end{proof}

\noindent {\bf Theorem C (full version).} {\it
Let $M$ be a minimal set (with full projection onto the base) of a fibre-preserving map in a compact graph bundle $(E,B,p,\Gamma)$. Assume that $M$ has nonempty interior.  Then the following holds.
\begin{enumerate}
\item [(C1)] $M = \Lint_s(M)$.
\item [(C2)] If $B$ is infinite then $M$ exhibits the following kind of `perfectness':
\begin{itemize}
\item[$\bullet$] If $U$ is a trivializing neighborhood, $h: p^{-1}(U) \to U\times \Gamma$ a canonical  homeomorphism and $\widetilde M_U = h(M_U)$, then for every $(z,p)\in \widetilde M_U$ there is a sequence of points $U\ni z_n \to z$, $z_n \neq z$, such that $(z_n,p) \in \widetilde M_U$ for all $n$.
\end{itemize}
\item [(C3)] $\mathcal R_B(M)$  is a closed nowhere dense subset of $B$.
\item [(C4)] All the sets $M_b$, $b\in B$, are unions of circles. In fact there exist an open dense set  $\mathcal{O} \subseteq B$ and a positive integer $m$ such that
\begin{itemize}
\item[$\bullet$] for each $z \in \mathcal{O}$, $M_z$ \emph{is} a \emph{disjoint} union of $m$ circles, and
\item[$\bullet$] for each $z\in B\setminus \mathcal{O}$, $M_z$ is a union of circles which properly contains a disjoint union of $m$ circles.
\end{itemize}
In particular, if $M_z$ is a circle for some $z\in B$, then $M_z$ is a circle for all $z$ in the open dense subset $\mathcal O$ of $B$.
\item [(C5)] For each $z\in \mathcal O$ there exists a trivializing neighborhood $z\in U \subseteq \mathcal O$ such that if $h: p^{-1}(U) \to U\times \Gamma$ is a canonical homeomorphism then $\widetilde M_U = h(M_U)$ has the structure of a direct product. It means that $\widetilde M_U = U \times \bigcup _{i=1}^m C_i$ where $C_1, \dots, C_m$ are pairwise disjoint circles in~$\Gamma$. Consequently,
\begin{itemize}
\item[$\bullet$] if $\mathcal{O} = B$, then $M$ is a sub-bundle of $E$ whose fibre is a disjoint union of $m$ circles, and
\item[$\bullet$] if $\mathcal{O} = B$, $E=B\times \Gamma$ and $B$ is connected, then $M$ is a direct product of $B$ and a disjoint union of $m$ circles.
\end{itemize}
\item [(C6)] The set $M_{\mathcal O}$ is dense in $M$.
\item [(C7)] Call a circle $\mathcal K \subseteq M_b$, $b\in B$, a \emph{generating circle} if there are circles $\mathcal K_n \subseteq M_{b_n}$, $b_n \in \mathcal O$, $n=1,2,\dots$, such that $\mathcal K_n \to \mathcal K$ with respect to the Hausdorff metric in $E$.
    Then the set $M$ is the union of all generating circles. If $b\in \mathcal O$ then $M_b$ is a disjoint union of $m$ circles and each of them is in fact generating. If $b\in B\setminus \mathcal O$, the set $M_b$ may contain a circle that is not generating but it always contains at least $m+1$ generating circles, at least $m$ of them being pairwise disjoint.
\item [(C8)] If $z\in \mathcal{O}$ then the set $M_z$, which is a disjoint union of $m$ circles, is mapped by $F$ onto a disjoint union of $m$ circles in $M_{f(z)}$.
\item [(C9)] If $z\in B\setminus \mathcal{O}$ then a generating circle in $M_z$ is mapped by $F$ onto a generating circle in $M_{f(z)}$. A non-generating circle in $M_z$ need not be mapped onto a circle.
\item [(C10)] If $f$ is monotone  then  $\mathcal{O} = B$ (hence, $M$ is a sub-bundle of $E$).
\item [(C11)] If $E = B\times \Gamma$ and $B$ is locally connected then $\mathcal{O} = B$ (hence, $M$ is a sub-bundle of~$E$ and if $B$ is also connected, then $M$ is a direct product).
\end{enumerate}}
\vspace{2mm}

Concerning (C8), let us remark that if $z\in \mathcal{O}$ and $S$ is a circle in $M_z$ then the map $F|_{S} :\, S \to M_{f(z)}$ need not be injective even if  $f$ is a homeomorphism (see the non-invertible skew-product torus map in \cite{KST}) and the map $F|_{M_{z}} :\, M_{z} \to M_{f(z)}$ need not be surjective (see Theorem D).

In (C9), two different/disjoint generating circles in $M_z$ can be mapped onto the same generating circle in $M_{f(z)}$ (again, see Theorem D).

\begin{proof}
\emph{(C1)} Since $\End (M) = \emptyset$, this follows from Lemma~\ref{L:Ends}.

\medskip

\emph{(C2)} Since the argument is local (concerns only that part of the minimal set which projects onto $U$), we may simply assume that $E=B\times \Gamma$, $M_U = U\times \Gamma$ and to work with $M_U$ rather than with $\widetilde M_U$. 

We have $(z,p) \in \Lint_s(M)$. Consider an $M$-open neighborhood $\mathcal G$ of $(z,p)$, mentioned in the definition of a strongly star-like interior point. One of the properties of $\mathcal G$ is that if $b\in p(\mathcal G)$ then $\mathcal G_b$ contains the point $(b,p)$. Thus, it is sufficient to prove that $z\in p(\mathcal G)$ is a limit point of $p(\mathcal G)$. Suppose, on the contrary, that $z$ is an isolated point of $p(\mathcal G)$. Then $\mathcal G \cap M_z$ is an $M$-open neighborhood of $(z,p)$. Since $F|_M$ is minimal, $(z,p)$ returns to $\mathcal G \cap M_z$ whence we obviously get that $z$ is a periodic point of $f$. However, $f$ is minimal and so $B$ is just the periodic orbit of $z$ under $f$, a contradiction with the infiniteness of $B$.

\medskip

\emph{(C3)} We claim that the set $\mathcal R_E(M)$ is closed (in $E$, hence also in $M$). To show this, let $\mathcal R_E(M) \ni \gamma_n \to \gamma \in E$. Since we work
only with a neighborhood of the fibre containing $\gamma$, we may
assume that $E=B\times \Gamma$. Denote $p(\gamma_n) = b_n$ and
$p(\gamma) = b$. Since $M$ is closed, $\gamma \in M$. However, $M=
\Lint_s(M)$ and so, by the definition of a star-like interior point,
for large $n$ the point $\gamma_n$ has an $M_{b_n}$-open
neighborhood whose second projection is a subset of the second
projection of an $M_b$-open neighborhood of the point $\gamma$.
Since $\gamma_n\in \mathcal R_E(M)$, this obviously implies that also $\gamma
\in \mathcal R_E(M)$. We have thus proved that $\mathcal R_E(M)$ is closed, hence
compact. Then also its projection $\mathcal R_B(M) = p(\mathcal R_E(M))$ is compact.

To prove that the (closed) set $\mathcal R_B(M)$ is nowhere dense, suppose, on the contrary, that some closed ball $C$ is a subset of
$\mathcal R_B(M)$ (closed balls here and in the rest of the proof of (C3) are always closed balls in the topology of $B$).

Combining Lemma~\ref{L:ramif1}(a) and Lemma~\ref{L:ramif2} we get that there are a closed ball $C_1 \subseteq C$ and ramification points $q_1, q_2 \in \Gamma$ such that $$C_1 \times \{q_1\} \subseteq \mathcal R_E(M) \quad \text{and} \quad F(C_1\times \{q_1\}) = f(C_1) \times \{q_2\} \subseteq \mathcal R_E(M)~.$$
The set $f(C_1) \subseteq \mathcal R_B(M)$ has nonempty interior in $B$ because $C_1$ has nonempty interior in $B$ and $f:B\to B$, being a minimal map, is feebly open. Then, by Lemma~\ref{L:ramif2}(b), there is a closed ball $C_2$ and a ramification point $q_3$ of $\Gamma$ such that
$$C_2 \subseteq f(C_1) \quad \text{and} \quad F(C_2 \times \{q_2\}) = f(C_2) \times \{q_3\} \subseteq \mathcal R_E(M)~.$$
Again, as above, $f(C_2)$ has nonempty interior in $B$ and so we can apply Lemma~\ref{L:ramif2} to find $C_3$ and $q_4$. Continuing in this way, we obtain a sequence of closed balls $(C_n)_{n=1}^\infty$ in $B$ and a sequence $(q_n)_{n=1}^\infty$ of ramification points of $\Gamma$ such that
$$
C_n \times \{q_n\} \subseteq \mathcal R_E(M)  \quad \text{and} \quad  F(C_n \times \{q_n\}) \supseteq C_{n+1}\times \{q_{n+1}\} \quad \text{for every $n$}~.
$$
Now choose a point $\gamma$ in the nonempty compact  set  
$$
(C_1 \times \{q_1\}) \cap F^{-1}(C_2\times \{q_2\}) \cap F^{-2}(C_3\times \{q_3\}) \cap \dots~.
$$
Then all the points $\gamma, F(\gamma), F^2(\gamma), \dots$ belong to $\mathcal R_E(M)$. By minimality of $F|_M: M\to M$, the set $\mathcal R_E(M)\subseteq M$ containing the $F$-orbit of $\gamma$ is dense in $M$. Since $\mathcal R_E(M)$ is also closed in $M$ (see the beginning of the proof of (C3)), we get that $\mathcal R_E(M)=M$. However, this contradicts the fact that $\mathcal R_E(M)$ is nowhere dense in $M$. Indeed, if $(z,g)\in \mathcal R_E(M)$ then it is a ramification point of $M_z$ and a small connected $\Gamma_z$-neighborhood of $(z,g)$ (which has the form of a star, a full sub-star of which is a subset of $M_z$) contains no other ramification points of $\Gamma_z$, while containing points from $M_z$ different from $(z,g)$.  It obviously follows that in every $M$-open neighborhood of $(z,g)$ there is an $M$-ball disjoint with $\mathcal R_E(M)$.

\medskip

\emph{(C4)} Let $H$ be the homeo-part of the minimal system $(B,f)$. Both the $f$-image and the $f$-pre-image of a nowhere dense set are nowhere dense (see Subsection~\ref{SS:minimality}). Therefore, since $\mathcal R_B(M)$ is nowhere dense in $B$ by (C3), the set
$$
H^*= H\setminus \bigcup _{n=-\infty}^\infty f^n(\mathcal R_B(M))
$$
is residual, $f(H^*)=H^*$, every point of  $H^*$ has just one $f$-pre-image, and both $f|_{H^*}$ and $(f|_{H^*})^{-1}$ are minimal homeomorphisms. For any $w\in H^*$, the set $M_w$ is a graph without end-points which, by definition of $H^*$, has no ramification point and so
$M_w$ is a circle or a disjoint union of several circles for all $w\in H^*$. 

Suppose that, for some $a\in B$, the set $M_a$ is not a union of circles. In our argument only $E_U$ for a small neighborhood $U$ of $a$ will play a role, therefore we may assume that $E=B\times \Gamma$. So, $M_a =\{a\}\times M_a^\Gamma$ for some subgraph $M_a^\Gamma$ of $\Gamma$. Choose $z_0 \in M_a^\Gamma$ such that $(a,z_0)\in M_a$ does not belong to any circle contained in $M_a$ (it may belong to a circle in $\Gamma_a$). Then for all $b\in B$ sufficiently close to $a$, in the set $M_b$ there is no circle containing the point $(b,z_0)$, since otherwise (due to closedness of $M$ and the fact that there are only finitely many circles in $\Gamma$) also $M_a$ would contain a circle containing $(a,z_0)$. Fix a point $y^* \in H^*$. Then its forward orbit under $f$ is a subset of $H^*$ and so, if we choose
a point $z\in \Gamma$ with $(y^*,z)\in M_{y^*}$, for each $n=0,1,2,\dots$ the point $F^n(y^*,z)$ belongs to one of the circles forming the set $M_{f^n(y^*)}$. It follows that the trajectory of $(y^*,z)$ under $F$ does not approach the point $(a,z_0)$, which contradicts the minimality of $F|_M$. Thus we have proved that
all the sets $M_b$, $b\in B$, are unions of circles.

Now let $m$ be the maximum number of (disjoint) circles in $M_w$ for $w\in H^*$. Then $m\geq 1$. Fix a point $w\in H^*$ such that $M_w$ consists of $m$ circles. Since $w$ has just one $f$-pre-image (and this pre-image belongs to $H^*$) and $F:M\to M$ is surjective, also $M_{f^{-1}(w)}$ consists of $m$ disjoint circles (less than $m$ circles cannot be continuously mapped onto $m$ disjoint circles). By induction, $M_{f^{-j}(w)}$ consists of $m$ disjoint circles for every $j=0,1,2,\dots$. Since $B$ is a compact metric space and $f:B\to B$ is minimal, the backward orbit $\{f^{-j}(w):\, j=0,1,2,\dots\}$ is dense in $B$ (see Subsection~\ref{SS:minimality}). Since $M$ is closed, the fact that $M_w$ consists of $m$ disjoint circles for every $w$ in a dense subset of $H^*$ implies (in view of the fact that there are only finitely many possibilities for a choice of $m$ disjoint circles in $\Gamma$) that
$M_w$ consists of $m$ disjoint circles for \emph{all} $w\in H^*$ and
$M_w$ contains $m$ disjoint circles (and perhaps some other circles) for all $w\in B\setminus H^*$.
So, if we put
$$
\mathcal O = \{z\in B:\, M_z \text{ is a disjoint union of $m$ circles}\},
$$
then
$
B\setminus \mathcal O = \{z\in B:\, M_z \text{ is a union of circles properly containing $m$ disjoint circles}\}.
$

Since $\mathcal O \supseteq H^*$, $\mathcal O$ is dense in $B$. To prove that $\mathcal O$ is open we are going to show that $B\setminus \mathcal O$ is closed. So, let $B\setminus \mathcal O \ni x_n \to x \in B$. Since we may assume that all the points $x_n$ are in a trivializing neighborhood of $x$, we may also assume that $E=B\times \Gamma$. Further, by passing to a subsequence if necessary, we may assume that for some disjoint circles $C_1,\dots, C_m$ in $\Gamma$ we have $M_{x_n} \supseteq \{x_n\}\times \bigcup_{i=1}^m C_i$ for every $n$. Taking into account that the points $x_n$ belong to $B\setminus \mathcal O$ and again passing to a subsequence if necessary, we may assume that there is a circle $S$ in $\Gamma$ different from all $C_i$, $i=1,\dots,m$, such that $M_{x_n}\supseteq \{x_n\}\times S$ for every $n$. Then, since $M$ is closed, $M_x \supseteq \{x\} \times (S \cup \bigcup_{i=1}^m C_i)$ which implies that $x\in B\setminus \mathcal O$.

\medskip

\emph{(C5)} Let $z\in V \subseteq \mathcal O$ be a trivializing neighborhood. We may simply assume that $p^{-1}(V) = V \times \Gamma$. Then $M_z = \{z\}\times \bigcup _{i=1}^m C_i$ for some pairwise disjoint circles $C_i$ in $\Gamma$. If a circle $C\subseteq \Gamma$ is different from these $m$ circles, then $M_v$ does not contain $\{v\}\times C$ whenever $v\in V$ is sufficiently close to $z$ (otherwise the closed set $M_z$ would contain $\{z\}\times C$). Thus, it is sufficient to choose a sufficiently small neighborhood $z\in U\subseteq V$.

From what we have just proved it follows that if $\mathcal O = B$ then $M$ is a bundle with fibre equal to a disjoint union of $m$ circles. Now additionally assume that $E=B\times \Gamma$ and $B$ is connected. For every $x\in B$ the set $M_x$ is a disjoint union of $m$ circles (where $m$ does not depend on $x\in B$). There are only finitely many $m$-tuples of circles in $\Gamma$ and so, using the closedness of $M$ and connectedness of $B$, we get that $M$ is the product of $B$ and some $m$-tuple of disjoint circles in $\Gamma$.

\medskip

\emph{(C6)} Since $f$ is minimal, the $f$-pre-image of a residual set is residual and so there is a point $x\in \mathcal O$ whose forward orbit is a subset of $\mathcal O$. Choose a point in $M_x$. Since its forward orbit is dense in $M$ and is a subset of $M_{\mathcal O}$, the result follows.

\medskip

\emph{(C7)} If $b\in \mathcal O$ then $M_b$ is a disjoint union of $m$ circles and each of them is generating by definition (even if the point $b$ is isolated in $B$). Then (C4), (C6) show that every $M_b$ is the union of generating circles (even if $b\in B\setminus \mathcal O$).

If $b\in B\setminus \mathcal O$, it is possible that each circle in $M_b$ is generating as in Theorem~D. However, it may contain also a non-generating circle. To see this, consider the case ($12_3$) in the proof of Theorem D. There, in one fibre of a minimal set, we can have two ``geometric" circles having two points in common. This gives $6$ circles altogether but only two of them, namely (in the notation from the proof of Theorem D) $\{c_l\}\times S_1$ and $\{c_l\}\times S_1^*$ , are generating ones. However, at least $m$ of the circles in $M_b$, $b\in B\setminus \mathcal O$ are disjoint generating circles. Indeed, consider a trivializing neighborhood $W$ of $b$ and think of $E_W$ as being the product $W\times \Gamma$. Then just choose a sequence of points $b_n\in \mathcal O$, $b_n\to b$ such that every $M_{b_n} = \{b_n\}\times A$ for the union $A$ of some fixed $m$ disjoint circles in $\Gamma$ (this is possible since $\Gamma$ contains only finitely many combinations of disjoint $m$ circles). Then $M_b \supseteq \{b\}\times A$ and so $M_b$ contains at least $m$ disjoint generating circles. Since $b\notin \mathcal O$, $M_b$ cannot be just the union of these $m$ circles and since we already know that $M$ is a union of generating circles, $M_b$ has to contain another generating circle.

\medskip

\emph{(C8)} Fix $z\in \mathcal{O}$.  First we  prove that if  $S$ is a circle in $M_z$ then $F(S)$ is a circle in $M_{f(z)}$.
We will work only with small neighborhoods of $z$ and $f(z)$, therefore we may assume that $E=B\times \Gamma$. By (C5), we may fix a neighborhood $z\in U \subseteq \mathcal O$ such that
\begin{equation}\label{E:strukturaMU}
M_U = U \times \bigcup _{i=1}^m C_i \text{ where $C_1, \dots, C_m$ are pairwise disjoint circles in $\Gamma$}.
\end{equation}
Set  $S=\{z\}\times C$  where $C$ is one of the circles $C_i$. We need to prove that $F(S) \subseteq M_{f(z)}$ is a circle.

Let us start by considering the case when $f(z) \in \mathcal O$. Then $M_{f(z)}$ is a disjoint union of circles and so $F(S)$ is necessarily a connected subset of one of them, call it $T$. To prove that $F(S)=T$ suppose, on the contrary, that $F(S)$ is a \emph{proper} subset of the circle $T$. We are going to prove that then there exists a redundant open set for $F|_M$ (which will contradict the minimality of $F|_M$). If $F(S)$ is an arc in $T$, there are two non-overlapping arcs in $S$ such that each of them is mapped onto $F(S)$. Hence there are also two disjoint arcs $\{z\}\times J_1$ and $\{z\}\times J_2$ in $S$ such that $F(\{z\}\times J_1)$ is in the interior of $F(\{z\}\times J_2)$. Then~(\ref{E:strukturaMU}) and the fact that the mentioned property of the point $z$ carries over to all the points sufficiently close to $z$, easily imply the existence of a redundant open set for $F|_M$, as desired. It remains to check the case when $F(S)$ is only a singleton in $T$. Then the existence of a redundant open set for $F|_M$ is obvious if also for all $v$ in a neighborhood of $z$ we have that $F(\{v\}\times C)$ is a singleton. If  such a neighborhood of $z$ does not exist, then arbitrarily close to $z$ there are points $v\in \mathcal O$ for which $F(\{v\}\times C)$ is not a singleton. By choosing such a point $v$ close enough to $z$ we can guarantee that $F(\{v\}\times C)$ is a \emph{proper} subset of a circle, i.e. an arc. To find a redundant open set for $F|_M$, one can simply repeat the argument which was used above in the case when $F(S)$ was an arc. We have thus proved that $F(S) \subseteq M_{f(z)}$
is a circle if $f(z) \in \mathcal O$. It is a generating circle by definition, since $f(z) \in \mathcal O$.

Now consider the case when $f(z)\in B\setminus \mathcal O$. In $U\setminus \{z\}$ there is a sequence $z_n \to z$ such that $f(z_n) \in \mathcal O$ (otherwise some neighborhood of $z$ would be mapped into $B\setminus \mathcal O$ which would contradict the fact that a minimal map sends open sets to sets with nonempty interior). Put $S_n = \{z_n\}\times C$ and $F(S_n) = \{f(z_n)\}\times K_n$, $n=1,2,\dots$. Then, by what we have proved above (note that both $z_n$ and $f(z_n)$ are in $\mathcal O$), we know that $K_n \subseteq \Gamma$ is a circle for every $n$. However, there are only finitely many circles in $\Gamma$ and so, by passing to a subsequence if necessary, we may assume that $K_n = K$ does not depend on $n$. Then obviously also $F(S) = \{f(z)\}\times K$ and so $F(S)$ is a circle, in fact a generating circle (because $f(z_n) \in \mathcal O$).

To finish the proof of (C8), it remains to show that different, hence disjoint, circles in $M_z$ are mapped onto \emph{disjoint} circles in $M_{f(z)}$.

Again, we start by considering a particular case when $f(z) \in \mathcal O$.
By replacing $U$ in~(\ref{E:strukturaMU}) by a smaller neighborhood of $z$ if necessary, we may assume, due to (C5), that $M_{f(U)} = f(U) \times \bigcup _{i=1}^m Q_i$ where $Q_1, \dots, Q_m$ are pairwise disjoint circles in~$\Gamma$. Let $S = \{z\}\times C$, $S' = \{z\}\times C'$ be disjoint circles in $M_z$ (here $C, C' \in \{C_1, \dots, C_m\}$, see~(\ref{E:strukturaMU})). To prove that also the circles $F(S)$ and $F(S')$ are disjoint, suppose on the contrary that $F(S)=F(S') = \{f(z)\}\times Q$ for some $Q\in \{Q_1,\dots,Q_m\}$. The circle $\{f(z)\}\times Q$ has positive distance from the rest of $M_{f(z)}$. Therefore, in view of~(\ref{E:strukturaMU}), for all $v$ sufficiently close to $z$ it holds that both $\{v\}\times C$ and $\{v\}\times C'$ are mapped by $F$ onto the same circle $\{f(v)\}\times Q$. The existence of a redundant open set for $F|_M$ easily follows; a contradiction.

Finally, consider the case when $f(z) \in B\setminus \mathcal O$. Again, let
$S = \{z\}\times C$, $S' = \{z\}\times C'$ be disjoint circles in $M_z$.
Choose a sequence $U\setminus \{z\} \ni z_n \to z$ such that $f(z_n) \in \mathcal O$. Consider the circles $S_n = \{z_n\}\times C$ and $S_n' = \{z_n\}\times C'$. For each $n$, both $z_n$ and $f(z_n)$ are in $\mathcal O$ and therefore, as we already know, $F(S_n) = \{f(z_n)\}\times P_n$ and $F(S_n') = \{f(z_n)\}\times P_n'$ are disjoint circles. By passing to a subsequence if necessary, we may assume that $P_n = P$ and $P_n' = P'$ do not depend on $n$. Then obviously $F(S) = \{f(z)\}\times P$ and $F(S') = \{f(z)\}\times P'$ which means that $F(S)$ and $F(S')$ are disjoint circles.

\medskip

\emph{(C9)} Let $S\subseteq M_z$ be a generating circle. So, there are circles $S_n \subseteq M_{z_n}$, $z_n \in \mathcal O$ (hence $z_n\neq z$), $n=1,2,\dots$, such that $S_n \to S$ with respect to the Hausdorff metric. By (C8), $F(S_n)$ is a generating circle for every $n$. Since $F(S_n) \to F(S)$ in the Hausdorff metric, $F(S)$ is a generating circle. Now see the proof of Theorem D, the case $(12_3)$. The set $M^*_{c_l}$ consists of two circles, one ``inside" the other. Together there are six circles there, two generating and four non-degenerating. Straightforward analysis shows that images of two non-degenerating circles are just arcs, not circles.

\medskip

\emph{(C10)} Let $f$ be monotone. Suppose that $B \setminus \mathcal O \neq \emptyset$. To show that this leads to a contradiction, consider two cases.

If for every $z\in B\setminus \mathcal O$ the set $f^{-1}(z)$ intersects $B\setminus \mathcal O$, then there is a backward orbit of $f$ lying in $B\setminus \mathcal O$. However, $B\setminus \mathcal O$ is nowhere dense while every backward orbit of a minimal map is dense, a contradiction.

If there exists $z_0 \in B\setminus \mathcal O$ such that the connected set $f^{-1}(z_0)$ is a subset of $\mathcal O$, we get a contradiction as follows. Fix a point $a\in f^{-1}(z_0)$. Since now we are going to find a special neighborhood of~$a$ by considering just small neighborhoods of $a$ and $z_0$, we may assume for a moment that $E=B\times \Gamma$. By (C5), there is a small neighborhood $U_a$ of $a$ such that $U_a \subseteq \mathcal O$ and $M_{U_a} = U_a \times \bigcup_{i=1}^m C_i^a$ where $C_1^a, \dots, C_m^a$ are pairwise disjoint circles in $\Gamma$. By (C8), $F(\{a\}\times C_i^a) = \{f(a)\}\times K_i^a$, $i=1,\dots, m$, for some pairwise \emph{disjoint} circles $K_1^a, \dots, K_m^a$ in $\Gamma$. Since there are only finitely many circles in $\Gamma$, there is $\varepsilon_0 >0$ such that any two different (not necessarily disjoint) circles in $\Gamma$ have Hausdorff distance at least $\varepsilon_0$. Therefore, if $i\in \{1,\dots,m\}$ and if $u\in U_a$ is sufficiently close to $a$ then the set $F(\{u\}\times C_i^a)$, which is a circle by (C8), equals $\{f(u)\} \times K_i^a$. By replacing $U_a$ by a smaller neighborhood if necessary, we may assume that the last claim works for all $u\in U_a$. Finally, consider the relative neighborhood of $a$ in $f^{-1}(z_0)$ of the form $V_a = U_a \cap f^{-1}(z_0)$. Denote also $S_i^a = \{z_0\} \times K_i^a$. Then we have that
\begin{equation}\label{E:Va}
\text{for every $v\in V_a$, $\quad F(M_v) = \bigcup_{i=1}^m S_i^a \subseteq M_{z_0}$}~.
\end{equation}
Without our above temporary assumption that $E=B\times \Gamma$, of course still a small relative neighborhood $V_a$ of $a$ exists such that~(\ref{E:Va}) works for some pairwise disjoint circles $S_1^a,\dots, S_m^a$ in $M_{z_0}$. Remember that, given $a\in f^{-1}(z_0)$, the family of these circles does not depend on the choice of $v\in V_a$.

Let $V_{a(1)}, \dots, V_{a(r)}$ be a finite cover of the compact space $f^{-1}(z_0)$ (in the relative topology), chosen from the open cover $\{V_a :\, a\in f^{-1}(z_0)\}$. Then, since $F(M)=M$, we have, by~(\ref{E:Va}),
\begin{equation}\label{E:Mz0}
M_{z_0} = \bigcup_{a\in f^{-1}(z_0)} F(M_a) =  \bigcup_{j=1}^r F(M_{V_{a(j)}}) = \bigcup_{j=1}^r \bigcup_{i=1}^m S_i^{a(j)}~.
\end{equation}
We claim that the family of $m$ disjoint circles $\{S_1^{a(j)}, \dots, S_m^{a(j)}\}$ does not depend on $j$. To see it, fix $j, k \in \{1,\dots, r\}$, $j\neq k$. In particular case when $V_{a(j)} \cap V_{a(k)} \neq \emptyset$ it suffices to choose $x\in V_{a(j)} \cap V_{a(k)}$ and to use  that, by~(\ref{E:Va}), it holds $\bigcup_{i=1}^m S_i^{a(j)} = F(M_x) = \bigcup_{i=1}^m S_i^{a(k)}$. In general case realize that in the family  $V_{a(1)}, \dots, V_{a(r)}$ there is a finite chain of sets starting with $V_{a(j)}$ and ending with $V_{a(k)}$ such that any two consecutive elements of the chain intersect (if such a chain did not exist, the \emph{connected} set $f^{-1}(z_0)$ would be a union of two \emph{disjoint} nonempty sets open in the topology of $f^{-1}(z_0)$). Hence also in the general case we have $\bigcup_{i=1}^m S_i^{a(j)} = \bigcup_{i=1}^m S_i^{a(k)}$. Then~(\ref{E:Mz0}) implies that $M_{z_0}$ is a union of just $m$ disjoint circles. Hence $z_0\in \mathcal O$, a~contradiction.

\medskip

\emph{(C11)} We claim that to prove  $\mathcal O =B$   we may without loss of generality assume that $B$ is also connected.
In fact, suppose for a moment that we have proved $\mathcal O =B$ under the additional assumption of connectedness of $B$. Then we can finish the proof as follows. The space $B$, being compact and locally connected, has finitely many components $B_1, \dots, B_r$ and these are locally connected. The map $f$, being minimal, cyclically permutes them and $f^r$ is minimal on each of them.  Then, for $i=1,\dots,r$, the set $M_{B_i}$ is a minimal set of $F^r|_{B_i \times \Gamma}$. Hence, using our temporary assumption that $\mathcal O$ is the whole base space provided the base space is locally connected and connected, we get that for every $x\in B_i$ the set $M_x$ is a disjoint union of $m_i$ circles (where $m_i$ does not depend on $x\in B_i$). There are only finitely many $m_i$-tuples of circles in $\Gamma$ and so, using the closedness of $M_{B_i}$ and connectedness of $B_i$, we get that $M_{B_i}$ is the product of $B_i$ and some $m_i$-tuple of disjoint circles in $\Gamma$. Further, by (C4) the positive integer $m_i$ does not depend on $i$, i.e. there is $m$ with $m_i=m$ for all $i=1,\dots,r$. Thus, $\mathcal O = B$ (still, if $r>1$, the $m_i$-tuple of circles may depend on $i$).

So, assume that the locally connected space $B$ is also \emph{connected}. We are going to prove that then $\mathcal O=B$.

Consider the open set $\mathcal O \subseteq B$ defined in (C4). Since $B$ is locally connected, so is $\mathcal O$. Recall that a space $X$ is locally connected if and only if for every open set $U$ of $X$, each  component of $U$ is open. It follows that the set $\mathcal O$ can be represented as a disjoint union $\mathcal O = \cup \,\mathcal W_j$ of a countable family of its components $\mathcal W_j$, each $\mathcal W_j$ being $B$-open, locally connected and connected. Let $m$ be the positive integer from (C4).

Due to the connectedness of $\mathcal W_j \subseteq \mathcal O$, by (C5) we obtain the direct product structure of each $M_{\mathcal W_j}$, i.e., there exist \emph{pairwise disjoint} circles $C_1^j,\dots,C_m^j$ in $\Gamma$ such that
\begin{equation}\label{E:MWj}
M_{\mathcal W_j} = \mathcal W_j \times \bigcup _{i=1}^m C_i^j~.
\end{equation}
The circles $C_1^j, \dots, C_m^j$ in general depend on $j$, but $m$ does not. Let $L$ be the (finite) set of all circles $C_i^j$ (for all $j$ and all $i=1,\dots,m$).

Since $M$ is a closed set, for the closure of $M_{\mathcal W_j}$ we have $\overline{M_{\mathcal W_j}} = \overline{\mathcal W_j} \times \bigcup _{i=1}^m C_i^j \subseteq M$.
The set $\overline{\mathcal W_j}$ is connected. We call each of $m$ connected components $\overline{\mathcal W_j} \times C_i^j$ of the closure $\overline{M_{\mathcal W_j}}$ a \emph{prime cylinder} (more precisely, $\overline{\mathcal W_j} \times C_i^j$ is a prime cylinder corresponding to the circle $C_i^j$). Each prime cylinder has nonempty  $E$-interior. Notice also that each prime cylinder is a union of generating circles and is of course a connected subset of $M$. For a fixed circle $C$ in $\Gamma$, consider the set of indices
$
I(C):= \{j:\, \mathcal W_j\times C \subseteq M\}.
$
Let $[C_\alpha]$ be the components of the set
$\overline{\cup_{j \in I(C)}\mathcal W_j} \times C$, so
\begin{equation}\label{E:cylmax}
\overline{\cup_{j \in I(C)}\mathcal W_j} \times C = \bigsqcup_{\alpha}\,[C_\alpha]~.
\end{equation}
We will say that each $[C_\alpha]$ is  a \emph{maximal cylinder} corresponding to the circle $C$ (note that it is a subset of $M$). Observe that $[C_\alpha]$ has the form
\begin{equation}\label{E:max-prim}
\hspace{0mm}[C_\alpha] =  \overline{\cup_{\gamma \in \Upsilon}P_\gamma} \ \ \text{where} \ P_\gamma, \gamma \in \Upsilon \text{ are some prime cylinders corresponding to $C$.}
\end{equation}
By definition, $[C_\alpha]\cap [C_\beta] = \emptyset$ for $\alpha \not=\beta$. We will also need the following claim.

\medskip

\noindent \emph{Claim (Properties of maximal cylinders).}
\begin{itemize}
\item [(a)] Two maximal cylinders $\mathcal M_1, \mathcal M_2$ corresponding to the same circle $C$ either are disjoint or coincide.
\item [(b)] If $b\in \mathcal O$ and $\mathcal M_1 \cap \mathcal M_2 \cap M_b\neq \emptyset$, then $\mathcal M_1 = \mathcal M_2$.
\item [(c)] If $\mathcal M$ and $\mathcal M_{\lambda}$, $\lambda \in \Lambda$ are maximal cylinders with $M\subseteq \cup_{\lambda \in \Lambda} M_{\lambda}$, then $\mathcal M = \mathcal M_{\lambda_0}$ for some $\lambda_0 \in \Lambda$.
\item [(d)] For any $k\geq 1$, the $F^k$-image of a prime cylinder $P=\overline{\mathcal W_i}\times C \subseteq M$ is a subset of a maximal cylinder.
\item [(e)] The family of all maximal cylinders is finite and its union equals $M$.
\item [(f)] Any maximal cylinder is mapped by $F$ into a maximal cylinder.
\end{itemize}

\smallskip

\noindent \emph{Proof of Claim} (a) Each of the sets $\mathcal M_1, \mathcal M_2$ is a component of the set $\overline{\cup_{j \in I(C)}\mathcal W_j} \times C$. Two non-disjoint components coincide.

(b) By (C4), $M_b$ is a \emph{disjoint} union of circles. One of them, call it $C$, is such that $\mathcal M_1 \cap \mathcal M_2$ intersects $\{b\}\times C$ and, by definition of maximal cylinders, $(\mathcal M_1)_b = (\mathcal M_2)_b = \{b\}\times C$. Now apply (a).

(c) By definition of a maximal cylinder, there exists $b\in \mathcal O$ and $C\in L$ such that $\mathcal M \supseteq \{b\}\times C$. There is $\lambda_0 \in \Lambda$ such that $\mathcal M_{\lambda_0}$ intersects $\{b\}\times C$. By (b), $\mathcal M = \mathcal M_{\lambda_0}$.

(d) The set $P$ is a union of generating circles and, by (C8) and (C9), a generating circle is mapped onto a (generating) circle.  It follows that if $S$ is a circle in $\Gamma$ then, due to continuity of $F^k$ and the fact that $\Gamma$ contains only finitely many circles, the set of those $z\in \overline{\mathcal W_i}$ for which $F^k(\{z\}\times C) = \{f^k(z)\}\times S$, is open in $\overline{\mathcal W_i}$. However, the set $\overline{\mathcal W_i}$ is connected. Therefore there exists one circle $S$ such that
\begin{equation}\label{E:cyl1}
F^k(P) = f^k(\overline{\mathcal W_i})\times S \subseteq M.
\end{equation}
Since $f: B\to B$ is minimal, it is feebly open. Hence $f^k$ is feebly open. Therefore the set $U_i:= \Inte f^k(\mathcal W_i)$ of all $B$-interior points of  $f^k(\mathcal W_i)$ is dense in $f^k(\mathcal W_i)$, hence also dense in $\overline{f^k(\mathcal W_i)} = f^k(\overline{\mathcal W_i})$. So, $\overline U_i = f^k(\overline{\mathcal W_i})$. On the other hand, $U_i$ is open and $\mathcal O = \cup_j \,\mathcal W_j$ is dense and open, hence
\begin{equation}\label{E:cyl2}
f^k(\overline{\mathcal W_i}) = \overline U_i =  \overline{\mathcal O \cap U_i} =  \overline {(\cup_j\,\mathcal W_j) \cap U_i} = \overline {\cup_j\,(\mathcal W_j \cap U_i)} ~.
\end{equation}
Further, by~(\ref{E:cyl1}), $U_i\times S \subseteq M$. So, if $\mathcal W_l \cap U_i \neq \emptyset$ for some $l$, then $\mathcal W_l \times S \subseteq M$, i.e. $l\in I(S)$. It follows, taking into account~(\ref{E:cyl1}) and~(\ref{E:cyl2}), that
$$
F^k(P) = f^k(\overline{\mathcal W_i})\times S= \overline {\cup_{l\in I(S)}\, (\mathcal W_l \cap U_i)} \times S \subseteq \overline{\cup_{l \in I(S)}\mathcal W_l} \times S = \bigsqcup_{\alpha}\,[S_\alpha]
$$
where $[S_\alpha]$ are the components of the set $\overline{\cup_{l \in I(S)}\mathcal W_l} \times S$ (see (\ref{E:cylmax})). Since $F^k(P)$ is connected, it is a subset of one $S_{\alpha}$, which finishes the proof that $F^k$-image of a prime cylinder is a subset of a maximal cylinder.

(e) Now, since the prime cylinder $P$ has nonempty interior in $M$ and $F|_M$ is minimal, we have that $M = \bigcup_{k=0}^{N-1} F^k(P)$ for some $N$ (this is a property of minimal systems, see Subsection~\ref{SS:minimality}). This together with (d) give that $M$ is covered by $N$ (not necessarily distinct) maximal cylinders. Then, using (c), we get that the family of all maximal cylinders is finite (has at most $N$ elements) and its union equals $M$.

(f) Let $\mathcal M_1, \dots, \mathcal M_r$ be the list of all (pairwise distinct) maximal cylinders (at the moment we do not know whether they are pairwise disjoint). For $i=1,\dots, r$ put $\mathcal M_i = B_i \times S_i$, where $B_i \subseteq B$ is closed and connected set (containing at least one of the sets $\mathcal W_j$) and $S_i$ is a circle in $\Gamma$ (in fact $S_i\in L$, see~(\ref{E:MWj}) and the notation $L$ after it).
We prove that, for instance, $F(\mathcal M_1)$ is a subset of a maximal cylinder. By~(\ref{E:max-prim}),
$$
\mathcal M_1= \overline{\cup_{\gamma \in \Upsilon}P_\gamma} \quad \text{where } P_\gamma, \gamma \in \Upsilon \text{ is the family of prime cylinders contained in $\mathcal M_1$}
$$
(of course, all these prime cylinders $P_\gamma$ correspond to the circle $S_1$). We know, by (d), that for each $\gamma \in \Upsilon$ there is a maximal cylinder, call it $\mathcal N_{\gamma}$, with $F(P_\gamma) \subseteq \mathcal N_{\gamma}$. In the particular case when all these maximal cylinders are the same, i.e. when there is $\gamma_0 \in \Upsilon$ such that $\mathcal N_{\gamma} = \mathcal N_{\gamma_0}$ for all $\gamma \in \Upsilon$, we get the desired relation:
$$
F(\mathcal M_1)= F(\overline{\cup_{\gamma \in \Upsilon}P_\gamma}) = \overline{F(\cup_{\gamma \in \Upsilon}P_\gamma)} = \overline{\cup_{\gamma \in \Upsilon}F(P_\gamma)} \subseteq \mathcal N_{\gamma_0}~.
$$
To finish the proof, we are going to show that the assumption that not all maximal cylinders $\mathcal N_{\gamma}$ are the same, leads to a contradiction.

So, let $d\geq 2$ and $\mathcal N^{1}, \dots, \mathcal N^{d}$ be the list of all pairwise distinct maximal cylinders in the family $\mathcal N_{\gamma}$, $\gamma \in \Upsilon$. Then there is a decomposition $\Upsilon = \Upsilon^1 \sqcup \dots \sqcup \Upsilon^d$ such that $F(P_\gamma) \subseteq \mathcal N^j$ for all $\gamma \in \Upsilon^j$. Denote $\Pi_j:= \overline{\cup_{\gamma \in \Upsilon^j}P_\gamma}$, $j =1,2, \dots, d$. Of course,
$$
F(\Pi_j) = F(\overline{\cup_{\gamma \in \Upsilon^j}P_\gamma})\subseteq \mathcal N^j~.
$$
We claim that the sets $\Pi_j$ are pairwise disjoint. To show this, suppose on the contrary that $\Pi_i \cap \Pi_k \neq \emptyset$ for some $i\neq k$. Then, in view of the fact that all prime cylinders $P_\gamma$ correspond to the circle $S_1$, there is $b_0\in B$ such that $\{b_0\} \times S_1 \subseteq \Pi_i\cap \Pi_k$. Obviously, $\{b_0\} \times S_1$ is a generating circle and, by (C8) and (C9), its $F$-image is some circle $\{f(b_0)\} \times S^*$. Then $\mathcal N^i = B^i \times S^*$ and $\mathcal N^k = B^k \times S^*$ for some closed sets $B^i, B^k$ containing $f(b_0)$. Here $\mathcal N^i$, $\mathcal N^k$ are different, but not disjoint, maximal cylinders corresponding to the same circle $S^*$. This contradicts already proved part (a) of the Claim. So, we have proved that the sets $\Pi_j$ are pairwise disjoint. Then
$$
\mathcal M_1= \overline{\cup_{\gamma \in \Upsilon}P_\gamma} = \cup_{j=1}^d\overline{\cup_{\gamma \in \Upsilon^j}P_\gamma} = \sqcup_{j=1}^d \Pi_j
$$
is the decomposition of the connected set $\mathcal M_1$ into finitely many closed nonempty sets, a contradiction. This finishes the proof of Claim.  \hfill $\checkmark$$\checkmark$$\checkmark$

\medskip

Now we are ready to finish the proof of (C11).

\medskip

Similarly as in the proof of the part (f) of Claim, let $\mathcal M_1, \dots, \mathcal M_r$ be the list of all (pairwise distinct) maximal cylinders, where $\mathcal M_i = B_i \times S_i$. By the part (b) of Claim, two different maximal cylinders may intersect only in fibres above the set $B\setminus \mathcal O$. Therefore
\begin{equation}\label{E:podstatne}
\mathcal M_1\setminus (\mathcal M_2 \cup \dots \cup \mathcal M_r) \quad \text{ has nonempty interior in $M$}~.
\end{equation}
Since the map $F|_M$ is minimal, there exists a positive integer $j$ with
$$
F^j(\mathcal M_1) \cap \left(\mathcal M_1\setminus (\mathcal M_2 \cup \dots \cup \mathcal M_r)\right) \neq \emptyset~.
$$
However, every maximal cylinder is mapped by $F$ into a maximal cylinder, therefore we necessarily have $F^j(\mathcal M_1) \subseteq \mathcal M_1$. It follows that
$F^j|_{\mathcal M_1} : \mathcal M_1 \to \mathcal M_1$ is minimal. (Indeed, if in a minimal system
$(M,F)$ there is a closed and connected set $Y\neq \emptyset$ with $F^j (Y) \subseteq Y$ for some $j\geq 1$, then $Y$ is a minimal set of $F^j$. This is probably well known and explicitly can be found in~\cite{LS}.)

However, $f$ is minimal on the \emph{connected} space $B$ (see the discussion at the beginning of the proof of (C11)), hence it is totally minimal (this is well known, see e.g.~\cite{LS}). Since the minimal map $f^j$ is the base map of $F^j$, the fact that $F^j|_{\mathcal M_1}$ is minimal implies that $B_1 = B$. In the same way we get $B_i = B$ for all $i=1,\dots, r$. But then $M$ is a finite union (see Claim (e)) of maximal cylinders in the form $\mathcal M_i = B \times  S_i$, $i=1,\dots, r$. Since the maximal cylinders $\mathcal M_i$ of this particular form are assumed to be pairwise distinct and $\mathcal O \neq \emptyset$, by Claim (b) they are pairwise disjoint (i.e, the circles $S_i$ are pairwise disjoint). Thus $\mathcal O = B$. The sets $\mathcal M_i$ are the components of the minimal set $M$ and so they are cyclically permuted by $F$.
\hfill $\square$
\end{proof}

%%%%%%%%%%%%%%%%%%%%%%%%%%%%%%%%%%%%%%%%%%%%%%%%%%%%%

\section{Proof of Theorem~E}\label{S:proofsDE}

%%%%%%%%%%%%%%%%%%%%%%%%%%%%%%%%%%%%%%%%%%%%%%%%%%%%%

\begin{lemma}\label{L:totally disconnected}
Let $M$ be a nowhere dense closed subset of a compact graph bundle $(E,B,p,\Gamma)$. Then a typical fibre of $M$ is totally disconnected.
\end{lemma}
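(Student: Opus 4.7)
The plan is to prove that for every $\varepsilon>0$, the set
\[
B_\varepsilon := \{b\in B : M_b \text{ has a connected component } C \text{ with } \diam(C)\geq\varepsilon\}
\]
is closed and nowhere dense in $B$. Granting this, $\bigcup_{n\geq 1} B_{1/n}$ is of first category, and on its residual complement every fibre $M_b$ has only singleton components, so $M_b$ is totally disconnected.

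Closedness of $B_\varepsilon$ is a routine Hausdorff-limit argument. If $B_\varepsilon\ni b_n\to b$ and $C_n\subseteq M_{b_n}$ is a component with $\diam(C_n)\geq\varepsilon$, then by compactness of the hyperspace of $E$ we extract a subsequence with $C_n\to C$ in the Hausdorff metric. The limit $C$ is a continuum, $\diam(C)\geq\varepsilon$ by lower semicontinuity of the diameter, and $C\subseteq p^{-1}(b)\cap M$ since $p$ is continuous and $M$ is closed; hence $b\in B_\varepsilon$.

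For nowhere-density, I would argue by contradiction: assume $B_\varepsilon$ contains a nonempty open set and, after shrinking, fix a trivializing open set $V\subseteq B_\varepsilon$, identifying $p^{-1}(V)$ with $V\times\Gamma$. The main combinatorial point is that $\Gamma$ decomposes into finitely many edges $e_1,\dots,e_m$, and any subcontinuum $K\subseteq\Gamma$ is a connected subgraph; so any two of its points are joined by an arc inside $K$, and if $\diam(K)\geq\varepsilon$ then $K$ contains an arc of length $\geq\varepsilon$, which by pigeonhole contains a subarc of some $e_i$ of length at least $\varepsilon/m$. A standard partition-plus-shrink construction on each edge then produces a finite list $J_1,\dots,J_L$ of closed arcs in $\Gamma$, each with nonempty $\Gamma$-interior, such that every subcontinuum of $\Gamma$ of diameter $\geq\varepsilon$ contains some $J_\ell$.

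The proof is then closed by a Baire category step. Set $V_\ell := \{b\in V : \{b\}\times J_\ell\subseteq M\}$; closedness of $M$ makes each $V_\ell$ closed in $V$, and by the combinatorial step $V=V_1\cup\dots\cup V_L$. Since $V$ is open in the compact metric space $B$, it is a Baire space, so some $V_{\ell_0}$ has nonempty interior $W\subseteq B$, and then $W\times J_{\ell_0}\subseteq M$ is a set with nonempty $E$-interior, contradicting the hypothesis that $M$ is nowhere dense. The only nonroutine step is the combinatorial construction of $J_1,\dots,J_L$, which rests on the finiteness of the edge set of $\Gamma$; everything else is standard Baire category and hyperspace bookkeeping.
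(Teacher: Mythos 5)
Your proof is correct and reaches the conclusion by a route that is structurally similar to the paper's (a Baire category argument against nowhere density) but differs in the decomposition used. The paper first observes that $M_b$ is not totally disconnected iff it contains a \emph{ball} in $\Gamma_b$, stratifies by radius $\geq 1/n$, and then uses a \emph{countable dense set} $S\subseteq\Gamma$ to decompose further by the approximate level of the ball's center; two applications of second-category arguments then produce an open $G_1\subseteq B$ and a fixed ball $G_2\subseteq\Gamma$ with $G_1\times G_2\subseteq M$. You instead stratify by the diameter of a component and replace the countable dense set with a \emph{finite} family $J_1,\dots,J_L$ of arcs such that every subcontinuum of $\Gamma$ of diameter $\geq\varepsilon$ contains one of them, after which the Baire step in $V$ directly produces $W\times J_{\ell_0}\subseteq M$. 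Your separate Hausdorff-limit proof that $B_\varepsilon$ is closed is an extra step the paper avoids by working directly with second-category sets. Two small imprecisions in your combinatorial step, neither fatal: first, ``length'' should be ``diameter'' (no length structure on $\Gamma$ was fixed), and the pigeonhole bound should involve the number of \emph{vertices}, not edges --- an injective arc in $\Gamma$ visits each vertex at most once, hence is split by vertices into at most $V+1$ subarcs lying in edges (it can revisit an \emph{edge}, so $m$ is the wrong constant), and one of these subarcs then has diameter $\geq \varepsilon/(2(V+1))$ by the triangle inequality applied to two points of $K$ at distance $\geq\varepsilon/2$; second, the diameters in the definition of $B_\varepsilon$ are taken in the metric of $E$, while your $J_\ell$ live in $\Gamma$, so after fixing the trivialization over $V$ you should invoke uniform comparability of the two metrics over $\overline V$ (compactness of $p^{-1}(\overline V)$) to get a uniform $\varepsilon'>0$ for the $\Gamma$-diameters. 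With those clarifications the argument is complete; the paper's version is a bit shorter because the countable-dense-set/ball decomposition bypasses the combinatorial covering lemma you need.
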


\begin{proof}
Suppose, on the contrary, that $A=\{b\in B:\, \text{$M_b$ is not totally disconnected}\}$ is of 2nd category in $B$. Of course, $M_b$ is not totally disconnected if and only if it contains an arc and since $\Gamma$ is a graph, this is if and only if $M_b$ contains a ball in $\Gamma_b$. Therefore, since $A$ is of 2nd category, there is $n_0 \in \mathbb N$ such that
$$
A_{n_0} = \{b\in B:\, \text{$M_b$ contains a ball in $\Gamma_b$ with radius $\geq 1/n_0$}\}
$$
is of 2nd category. Since $B$ is covered by finitely many trivializing neighborhoods, there is a trivializing neighborhood $U$ such that $A_{n_0} \cap U$ is of 2nd category. To get a desired contradiction, it is sufficient to show that $M\cap p^{-1}(U)$ is somewhere dense. Of course, we may without loss of generality assume that $p^{-1}(U) = U\times \Gamma$. To prove that $M\cap (U\times \Gamma)$ is somewhere dense, fix a countable dense set $S\subseteq \Gamma$. For $b\in U$ and $s\in S$, a ball in $\Gamma_b = \{b\}\times \Gamma$ whose radius is $\geq 1/n_0$ and whose center has distance from $\{b\}\times \{s\}$ less than $1/(2n_0)$ is in the sequel said to be \emph{a big ball centered close to level $s$}. Let
$$
A^{U,s}_{n_0} = \{b\in U:\, \text{$M_b$ contains a big ball centered close to level $s$}\}~.
$$
It is obvious that $A_{n_0}\cap U = \bigcup_{s\in S} A^{U,s}_{n_0}$ and so there is $s_0\in S$ such that $A^{U,s_0}_{n_0}$ is of 2nd category, hence dense in some nonempty open set $G_1 \subseteq U$. On the other hand, any ball in $\Gamma$ whose radius is $\geq 1/n_0$ and whose center has distance from $s_0$ less than $1/(2n_0)$, contains the ball $G_2$ with center $s_0$ and radius $1/(2n_0)$. Then  $M$,  being closed, contains the open set $G_1\times G_2$. This contradicts  nowhere density of $M$.  \hfill $\square$
\end{proof}

\begin{proposition}\label{P:D}
In Theorem~A, suppose that $\card M_{z} <~\infty$ for some $z$ in the homeo-part $H$ of $f$. Then
a typical fibre of the minimal set $M$ has cardinality $N:= \min \{\card M_x:\, x\in H\} < \infty$.
\end{proposition}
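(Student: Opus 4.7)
The plan is to exploit the fact that the base map $f$ behaves like an invertible minimal homeomorphism on its homeo-part $H$, which forces $F$ to act surjectively on fibres over $H$ and makes the fibre cardinality non-increasing along $H$-orbits. I would first check that for every $z \in H$ the restriction $F|_{M_z} \colon M_z \to M_{f(z)}$ is surjective: minimality of $F|_M$ yields $F(M)=M$, so every $w \in M_{f(z)}$ has some $F$-preimage $x \in M$; since $p(x) \in f^{-1}(f(z))$ and $f(z) \in f(H) = H$ has only one $f$-preimage in $X$ by Lemma~\ref{L:homeo-part properties}(2), necessarily $p(x)=z$, so $x \in M_z$. This gives $\abs{M_{f(z)}} \le \abs{M_z}$ for every $z \in H$.

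I would then set $N := \min\{\abs{M_x} : x \in H\}$, which is finite by hypothesis, and fix $z_0 \in H$ attaining this minimum. Because $H$ is $f$-invariant, the entire forward orbit of $z_0$ lies in $H$; the monotonicity above together with the minimality of $N$ then force $\abs{M_{f^k(z_0)}} = N$ for every $k \geq 0$. Since $f$ is minimal on $B$, this orbit is dense in $B$.

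The main technical step is to show that $A_N := \{z \in B : \abs{M_z} \le N\}$ is a $G_\delta$ subset of $B$. For each $\varepsilon > 0$, let $C_\varepsilon$ be the set of $z \in B$ such that $M_z$ contains $N+1$ pairwise $\varepsilon$-separated points. A routine sequential-compactness argument (extracting limits of $(N+1)$-tuples and using closedness of $M$ and continuity of $p$) shows $C_\varepsilon$ is closed. Since $\abs{M_z} \le N$ precisely when $M_z$ admits no $(N+1)$-point $\varepsilon$-separated subset for any $\varepsilon > 0$, we get $A_N = \bigcap_{n \geq 1} (B \setminus C_{1/n})$, a $G_\delta$. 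Because $A_N$ contains the dense forward orbit of $z_0$, it is in fact a dense $G_\delta$.

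To conclude, the intersection $R := H \cap A_N$ is residual in $B$ as an intersection of two dense $G_\delta$ sets, the first being so by Lemma~\ref{L:homeo-part is G-delta}. For $z \in R$ one has $\abs{M_z} \le N$ from membership in $A_N$ and $\abs{M_z} \ge N$ from $z \in H$ together with the defining property of $N$, so $\abs{M_z} = N$ throughout $R$. Thus a typical fibre of $M$ has cardinality exactly $N$, and the only non-formal step is the $G_\delta$ claim for $A_N$; everything else is a formal combination of the surjectivity observation over $H$ with the Baire category theorem.
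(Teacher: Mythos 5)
Your proof is correct and follows essentially the same strategy as the paper: establish that fibre cardinality is non-increasing along forward $H$-orbits (via the surjectivity of $F$ over fibres in $H$), use density of the orbit of a minimizer $z_0$, show $\{z : \card M_z \leq N\}$ is $G_\delta$, and intersect with $H$. The only cosmetic difference is that the paper verifies the $G_\delta$ claim by expressing $B^{(\leq N)}$ as the intersection of the open sets of points whose fibre is covered by $N$ disjoint open sets of diameter $<1/n$, whereas you write it as the intersection of complements of closed sets of points whose fibre admits $(N{+}1)$ pairwise $1/n$-separated points — both are standard and equivalent.
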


\begin{proof}
By the assumption, $N\leq \card M_{z}$ is a positive integer and there is $z_0\in H$ with $\card M_{z_0} = N$. Denote
$$
B^{(\leq N)}:=\{x\in B:\, \card M_x \leq N\}~.
$$
Then $z_0 \in B^{(\leq N)}$ and we claim that $B^{(\leq N)} = \bigcap _{n=1}^\infty G^{(N)}_{1/n}$ where $G^{(N)}_{1/n}$ is the set of those points $x\in B$ for which $M_x$ can be covered by a disjoint union of $N$ open sets in the fibre $p^{-1}(x)$, each of these sets having diameter $<1/n$. The inclusion $B^{(\leq N)} \subseteq \bigcap _{n=1}^\infty G^{(N)}_{1/n}$ is trivial. To prove the converse inclusion, realize that simultaneous assumptions $x\in \bigcap _{n=1}^\infty G^{(N)}_{1/n}$ and $\card M_{x} \geq N+1$ obviously give a contradiction. Since $M$ is compact, $G^{(N)}_{1/n}$ is open. So,  $B^{(\leq N)}$ is a $G_{\delta}$ set in~$B$. Moreover, we claim that it is dense in $B$. To see this, realize that the set $H$ is $f$-invariant and for $x\in H$ we have $\card M_x \geq \card M_{f(x)} \geq N$. Hence, since for $z_0\in H$ we have $\card M_{z_0} = N$, we get that $\card M_{f^k(z_0)} = N$ for all $k=0,1,\dots$. So the set $B^{(\leq N)}$ contains the whole (forward) orbit of $z_0$, which is dense by minimality of $f$. We have proved that $B^{(\leq N)}$ is a $G_{\delta}$ dense set in $B$.

For each $x$ in the $G_{\delta}$ dense set $H \cap B^{(\leq N)}$ it obviously holds $\card M_x = N$. \hfill $\square$
\end{proof}

\noindent {\bf Theorem E.} {\it
Let $M$ be a minimal set (with full projection onto the base) of a fibre-preserving map in a compact graph bundle $(E,B,p,\Gamma)$. Assume that $M$ is nowhere dense.  Then either
\begin{enumerate}
\item [(E1)] a typical fibre of $M$ is a Cantor set, or
\item [(E2)] there is a positive integer $N$ such that a typical fibre of $M$ has cardinality $N$.
\end{enumerate}}

\vspace{2mm}

\begin{proof} 

Below, we use some ideas from the proof of~\cite[Theorem 6.1]{HY}.

By Lemma~\ref{L:totally disconnected}, a typical fibre of $M$ is totally disconnected. If (E1) does not hold, 
\begin{equation}
B^{\text {isol}}:= \{b\in B:\, M_b \text{ has an isolated point}\} \notag
\end{equation}
is a  2nd category set. Since $B$ can be covered by a finite number of trivializing neighborhoods, one of them, call it $W$, is such that $B^{\text {isol}}\cap W$ is of 2nd category in $W$ (even, of 2nd category in $B$).

We fix a homeomorphism $h: p^{-1}(W)\to W\times \Gamma$ such that on $p^{-1}(W)$ it holds $\pr_1 \circ h = p$.

Let $\mathcal T$ be a countable family of subtrees of the fibre $\Gamma$ such that the interiors, in the topology of $\Gamma$, of them are connected (i.e., the interior of such a tree is obtained from the tree by possible removing of some or all of the endpoints of the tree; no point which is not an endpoint is removed) and these interiors form a base of the topology on $\Gamma$. Consider the countable set
\begin{equation}
\mathcal D:= \left \{ \left ( T_1^{\Gamma}, T_2^{\Gamma}  \right ) :\,  T_1^{\Gamma}, T_2^{\Gamma} \in \mathcal T \, \text{ and } \,  T_1^{\Gamma} \subseteq \Inte T_2^{\Gamma}  \right \}.   \notag
\end{equation}
Note that the homeomorphism $h$ induces corresponding families of trees in each fibre $p^{-1}(b)$, $b\in W$. For each pair
$(T_1^{\Gamma}, T_2^{\Gamma}) \in \mathcal D$, put
\begin{equation}\label{E:defWT}
W( T_1^{\Gamma}, T_2^{\Gamma}):= \{b\in W:\, M_b \cap T_{2,b} = M_b \cap T_{1,b} \, \text { is a singleton}\}
\end{equation}
where $T_{i,b}:= h^{-1}(\{b\} \times T_i^{\Gamma})$ is the tree in $p^{-1}(b)$ corresponding to $T_i^{\Gamma}$, $i=1,2$. Of course,
\begin{equation}
B^{\text {isol}} \cap W = \bigcup _{(T_1^{\Gamma}, T_2^{\Gamma}) \in \mathcal D} W( T_1^{\Gamma}, T_2^{\Gamma})~. \notag
\end{equation}
Since $B^{\text {isol}} \cap W$ is a 2nd category set, there is a pair $(\widetilde T_1^{\Gamma}, \widetilde T_2^{\Gamma}) \in \mathcal D$ such that $W( \widetilde T_1^{\Gamma}, \widetilde T_2^{\Gamma})$ is dense in an open set $U\subseteq W$.

Let $\mathcal K(E)$ be the (compact) space of all compact subsets of $E$ endowed with the Hausdorff distance generated by the original metric in $E$. Since $M$ is compact, the map $\Theta : B\to \mathcal K(E)$ defined by $\Theta (b) = M_b$, $b\in B$ is upper semicontinuous. Hence, see e.g.~\cite[Theorem 1.4.13]{AF}, the set $C(\Theta)$ of continuity points of $\Theta$ is residual in $B$. By Lemma~\ref{L:homeo-part in P}, there is an invariant  residual set $R$ in $B$ such that $R\subseteq C(\Theta)\cap H$ where $H$ is the homeo-part of $f$.

Denote $V:= \Inte \widetilde T_2^{\Gamma}$. We claim that for any $b\in U\cap R \subseteq  \overline{W( \widetilde T_1^{\Gamma}, \widetilde T_2^{\Gamma})} \cap R$ it holds that $M_b^{\Gamma}\cap V$ is a singleton. In fact, each such point $b$ is a limit of points from $W( \widetilde T_1^{\Gamma}, \widetilde T_2^{\Gamma})$ and so $M_b^{\Gamma}\cap \widetilde T_1^{\Gamma}$ contains a point. Suppose that $M_b^{\Gamma}\cap V$ contains more than one point. Then, since $b$ is a point of continuity of $\Theta$, also for those points $c \in W( \widetilde T_1^{\Gamma}, \widetilde T_2^{\Gamma}) \cap U$ which are sufficiently close to $b$, we get that $M_c^{\Gamma} \cap V$ contains at least two points, which contradicts~(\ref{E:defWT}).

The set $\mathcal O := M \cap h^{-1}(U\times V)$ is a nonempty open subset of $M$. Hence, by the well known property of compact minimal systems, there is a positive integer $n_0$ such that every point from $M$ visits $\mathcal O$ not later than after $n_0$ iterations.

Now fix $y\in R$ and $e\in M_y$. By what was said above, $F^{n(e)}(e) \in \mathcal O$ for some $n(e)\leq n_0$. Hence $F^{n(e)}(G(e)) \subseteq \mathcal O$ for some neighborhood $G(e)$ of $e$ in  $M_y$. It follows that
\begin{equation}\label{E:vojde do O}
F^{n(e)}(G(e)) \subseteq h^{-1}(\{ f^{n(e)}(y) \} \times V) \cap M \subseteq \mathcal O~.
\end{equation}
By definition of $\mathcal O$ and the fact that $y$ has been chosen in the invariant set $R$, we get that $f^{n(e)} (y) \in U\cap R$. Therefore, by~(\ref{E:vojde do O}), $F^{n(e)}(G(e))$ is a singleton. Then  also $F^{n_0}(G(e))$ is a singleton. Since $M_y$ is compact, there are finitely many points $e_1,\dots, e_k \in M_y$ such that $G(e_1) \cup \dots \cup G(e_k) = M_y$. It follows that $F^{n_0}(M_y)$ is a finite set (it is a subset of $M_{f^{n_0}(y)}$ with cardinality $\leq k$).
Since $y\in H$ and $F(M)=M$, also $f^{n_0}(y)\in H$ and $M_{f^{n_0}(y)}= F^{n_0}(M_y)$ is finite. So, one can apply Proposition~D to get (E2).
\end{proof}

\begin{acknowledgements} The first and second authors were supported by Max-Planck-Institut f\"ur Mathematik (Bonn); they acknowledge the hospitality of the Institute. 
The second author was also supported by the Slovak Grant Agency, grant number VEGA~1/0978/11 and by the Slovak Research and Development Agency under the contract No.~APVV-0134-10. The second and third authors were partially supported by Fondo Nacional de Desarrollo Cient\'ifico y Tecnol\'ogico (Chile), project 1110309 and by Comisi\'on Nacional de Investigaci\'on Cient\'ifica y Tecnol\'ogica  (Chile), program ACT-56.
\end{acknowledgements}

\end{document}